\documentclass[11pt]{article}
\usepackage{graphicx} 
\usepackage{graphicx}
\usepackage[dvipsnames]{xcolor}
\usepackage{amsmath}
\usepackage{lscape}
\usepackage{subfigure}
\usepackage{float}
\usepackage[center]{caption}
\usepackage{comment}

\usepackage{enumitem}
 \usepackage{booktabs}
 
\usepackage{natbib}
 \bibpunct[, ]{(}{)}{,}{a}{}{,}%
{\end{list}} 
\newenvironment{hangref}{\begin{list}{}
{\setlength{\leftmargin}{+\parindent}
\setlength{\itemindent}{-\parindent}}}{\end{list}} 

\usepackage{setspace}

\newcommand{\handout}[5]{
   \renewcommand{\thepage}{#1-\arabic{page}}
   \noindent
   \begin{center}
   \framebox{
      \vbox{
    \hbox to 5.78in { {\bf 18.405J/6.841J Advanced
  Complexity Theory} \hfill #2 }
       \vspace{4mm}
       \hbox to 5.78in { {\Large \hfill #5  \hfill} }
       \vspace{2mm}
       \hbox to 5.78in { {\it #3 \hfill #4} }
      }
   }
   \end{center}
   \vspace*{4mm}
}

\newtheorem{theorem}{Theorem}
\newtheorem{corollary}{Corollary}
\newtheorem{lemma}{Lemma}

\newtheorem{proposition}[theorem]{Proposition}

\newtheorem{remark}{Remark}

\newcommand{\blot}{\rule{7pt}{7pt}}

\newenvironment{proof-sketch}{\noindent{\bf Sketch of Proof}\hspace*{1em}}{\blot\bigskip}
\newenvironment{proof-idea}{\noindent{\bf Proof Idea}\hspace*{1em}}{\blot\bigskip}
\newenvironment{proof-of-lemma}[1]{\noindent{\bf Proof of Lemma #1}\hspace*{1em}}{\blot\bigskip}
\newenvironment{proof-attempt}{\noindent{\bf Proof Attempt}\hspace*{1em}}{\blot\bigskip}

\makeatletter
\def\fnum@figure{{\bf Figure \thefigure}}
\def\fnum@table{{\bf Table \thetable}}
\long\def\@mycaption#1[#2]#3{\addcontentsline{\csname
  ext@#1\endcsname}{#1}{\protect\numberline{\csname 
  the#1\endcsname}{\ignorespaces #2}}\par
  \begingroup
    \@parboxrestore
    \small
    \@makecaption{\csname fnum@#1\endcsname}{\ignorespaces #3}\par
  \endgroup}
\def\mycaption{\refstepcounter\@captype \@dblarg{\@mycaption\@captype}}
\makeatother

\newcommand{\mathify}[1]{\ifmmode{#1}\else\mbox{$#1$}\fi}

\newcommand{\bigO}O

\def\blot{\quad \mbox{$\vcenter{ \vbox{ \hrule height.4pt
      \hbox{\vrule width.4pt height.9ex \kern.9ex \vrule width.4pt}
           \hrule height.4pt}}$}}

\allowdisplaybreaks

\begin{document}

\begin{titlepage}

\vspace*{.2in}

\centerline{\LARGE\bf Vehicle Platooning}

\vspace{1.2in}
    
\centerline{\Large Zhi-Long Chen $^*$}

\vspace{.3in}

\centerline{\Large Nicholas G.\ Hall $^{\dagger}$}

\vspace{2.2in}

\noindent $^*$ Robert H. Smith School of Business, University of Maryland, College Park, MD 20742

\medskip

\noindent $^{\dagger}$ Fisher College of Business, The Ohio State University, 2100 Neil Avenue, Columbus, OH 43210

\vspace{2.8in}

\centerline{June 24, 2026} %

\end{titlepage}

\pagestyle{empty}

\centerline{\large\bf Abstract}

\vspace{.4in}

\noindent {\bf Abstract:} Vehicle platooning offers significant benefits, including reduced energy consumption, lower emissions, improved road utilization, enhanced safety, and reduced driver fatigue. As intelligent driving technologies continue to advance, platoon sizes are expected to increase substantially, making the efficient sequencing and resequencing of vehicles increasingly important. We study the vehicle platoon sequencing and resequencing problem on road networks with varying segment lengths under two fundamental objectives: minimizing total energy consumption and minimizing the maximum energy consumption of any vehicle. 
For the typically encountered combinations of vehicle and road characteristics, we provide a complete computational complexity classification, either developing polynomial-time algorithms or proving computational intractability. For several intractable cases, we design fully polynomial-time approximation schemes and polynomial-time heuristics with provable performance guarantees. A computational study demonstrates that the proposed heuristics achieve average solutions within 1\% of optimal. We also consider settings in which only limited information about position-dependent energy savings is available and develop a heuristic with bounded worst-case performance.  In addition, we present an efficient algorithm for on-road vehicle resequencing when only limited position changes are permitted. Together, these results provide a comprehensive algorithmic framework for energy-efficient vehicle platoon sequencing and resequencing.

\vspace{.2in}

\noindent {\bf Key Words and Phrases:} transportation, vehicle platooning, efficient algorithms, heuristics.

\vspace{.2in}

\noindent {\bf 2020 Mathematics Subject Classification} \\
Primary: \\
90B06 — Transportation, logistics and supply chain management \\
Secondary: \\
90C27 — Combinatorial optimization \\
68Q25 — Analysis of algorithms and problem complexity \\
90C59 — Approximation methods and heuristics in mathematical programming

\newpage 

\pagestyle{plain}%
\pagenumbering{arabic}

\section{Introduction}  \label{sec:intro} 

A cooperative {\em vehicle platoon} is a fleet of vehicles traveling together, supported by advanced coordination and driving control technology. Well-documented benefits, resulting from reduction in aerodynamic drag, include savings in energy cost, environmental benefits, and road capacity usage, as well as reduction in driver fatigue and improved safety. A lead vehicle in a platoon may experience energy savings in the range of 4\% to 7\% for trucks. More significantly, a following vehicle may save between 10\% and 15\%. A study by Hussein and Rakha (2020) finds energy reductions of 4.5\% for light-duty vehicles, 15.5\% for buses, and 7.9\% for heavy-duty trucks. Improved inter-vehicle communication is enabling larger platoons and even greater savings. The amount of savings depends partly on fixed physical characteristics that constrain inter-vehicle distance or time, or travel speed.

However, two major decisions that affect energy savings are the (a) {\em initial sequencing} of the vehicles, and (b) their later {\em resequencing} within the platoon journey. We  focus on the two objectives of minimizing the total energy usage of the platoon and minimizing the maximum energy usage of any vehicle. 
 Most of the existing literature has developed heuristics without analyzing their theoretical performance. By contrast, we develop optimal algorithms and heuristic procedures for solving these problems, and evaluate their worst case performance analytically and their typical performance computationally.

The established benefits of vehicle platooning described above are motivating strong growth in its use. The truck platooning systems market size in the U.S. was \$223.44m in 2025, and is expected to grow to \$26.47b by 2035, with a 61.2\% CAGR (ResearchNester 2026).
For automotive platooning systems more generally, the market is estimated at \$6.6b in 2025 and is projected to reach \$48.3b by 2035, with heavy commercial vehicles representing 57.8\% of the market (Future Market Insights, Inc. 2026). It is estimated that system-level implementation in the U.S. can generate annual savings of \$868m for the trucking sector. Improved vehicle-to-vehicle communication, forward-looking radar, and intelligent braking systems will enhance the available benefits in two ways. First, the use of closer inter-vehicle spacing  within road safety parameters will magnify the energy saving of the following vehicles. Second, increases in feasible platoon size will improve the average energy saving per truck.  Under proposed automated highway systems, platoons of up to 25 vehicles are envisioned (Driver Knowledge Test Resource Center 2026), and simulation studies estimate potential energy savings of over 15\% at that platoon size (Lichtl\'{e} et al. 2024). This projected growth in platoon size makes the sequencing and resequencing problems which we consider both more valuable and more challenging.

Vehicle platooning is widely used for both internal combustion engine (ICE) vehicles and electric vehicles (EV). We briefly review some differences. EVs offer three additional gains not available to ICE vehicles. First, when braking, EVs run their electric motors in reverse, which recharges the battery (Recurrent Auto 2023). Second, EVs deliver full torque instantly when requested, which enables shorter spacing between vehicles (Zhang et al. 2025). Third, electric motors respond better to speed fluctuations than ICE vehicles (Recurrent Auto 2024). Hence, we adopt the notation of EVs such as state of charge (SoC). Nonetheless, all our mathematical results hold for vehicles of both types.

When a platoon contains heterogeneous vehicles, the sequence in which they travel affects the overall performance of the platoon, as measured for example by total energy usage. Platoon journeys typically consist of multiple road segments, with an opportunity to resequence the vehicles between them, for example at a truck stop. Resequencing can also occur on-road within prespecified zones that offer safety features such as multiple lanes, a lack of merging traffic, and an expectation of lower overall traffic volume. Intuitively, resequencing enables balancing the energy usage or the remaining SoC of the vehicles. This is needed when a vehicle with very low SoC may incur damage or even become inoperable. 

Heterogeneity between vehicles arises from their physical characteristics. Prominent among the relevant vehicle parameters are (a) the charge capacity, (b) the initial SoC, (c) the nominal energy usage rate if traveling individually, and (d) the minimum SoC that the vehicle can accept. Additionally, as observed above, the effective energy usage rate of a vehicle is determined not only by its nominal usage rate but also by its position in the platoon. Earlier positions typically define a higher effective usage rate. A further complication is that road segments within a platoon journey typically have different distances. This results from naturally occurring
variety in suitable locations for resequencing.
For example, a relatively long road segment may deplete the SoC of the leading vehicles, resulting in the need to resequence them into later positions for multiple later road segments. All these issues make the initial sequencing of the vehicles, and their later resequencing between road segments, complex optimization problems. Our work addresses these problems for a variety of combinations of vehicle, positional energy savings, and road segment, characteristics.

The contributions of this work are as follows. 
\begin{enumerate}[itemsep=1pt]
     
\item We provide the first comprehensive classification of the solvability of vehicle sequencing and resequencing problems within vehicle platoons.

\item For many of these problems, we describe the most efficient optimal algorithm so far developed.

\item For two intractable problems, we describe approximation schemes that can be used to find solutions that are as close to optimal as needed.

\item For other intractable problems, we describe simple heuristic methods that can easily be implemented by truck platoon companies, and analyze their worst case performance.

\item 
The performance of the heuristics is also evaluated by a computational study, which estimates that the typical performance of the heuristics is routinely less than 1

\item We analyze the performance of a simplified approach to scheduling and rescheduling that only considers the energy saving rates among the first few vehicles.

\item We provide an efficient optimal algorithm for the situation where vehicle position changes are limited by safety concerns during on-road resequencing. 

\end{enumerate}
 
The remainder of this paper is organized as follows. Section~\ref{sec:literature} reviews the literature on sequencing and resequencing in vehicle platoons. Section~\ref{sec:preliminaries} describes our notation, the problem environment, and the objectives considered. In Section~\ref{sec-min-total}, we consider the objective of minimizing total energy usage. Since low SoC levels are a significant problem in practice, in Section~\ref{sec:max-min} we consider the objective of balancing vehicle usage by minimizing the maximum energy usage of any vehicle. Section~\ref{sec:extensions} considers two extensions of the problem. Finally, Section~\ref{sec:conclude} presents a conclusion and some suggestions for further research.

\section{Related Literature}  \label{sec:literature}

The sequencing and resequencing of vehicles within a platoon directly determine the distribution of aerodynamic benefits and therefore strongly influence total energy consumption, battery depletion, fairness among vehicles, and ultimately the economic viability of platooning. Despite this importance, our review of the literature reveals four major limitations. First, most existing approaches are evaluated primarily through simulation rather than theoretical analysis. Second, computational complexity, approximability, and structural properties of the underlying optimization problems are largely unknown. Third, exact algorithms have received little attention and have been applied only to small instances. Fourth, when heuristic or reinforcement-learning approaches are used, worst-case performance guarantees are generally absent. To our knowledge, the vehicle platoon sequencing and resequencing problems remain among the least theoretically understood platooning problems despite their direct impact on energy savings.

\subsection{Surveys}  \label{subsec:litsurveys}

Because of its evident practical importance and potential benefits, the vehicle platooning problem is attracting a rapidly growing literature. We begin by discussing five surveys of the vehicle platooning literature. Bhoopalam et al. (2018) discuss various planning problems that arise within vehicle platooning, along with several related operations research models. Four main problems are identified: (a) which trucks to form together into a platoon, (b) where and when to form the platoon, (c) the route to follow, and (d) how to sequence the vehicles within the platoon. Zhang et al. (2020) describe factors that contribute to fuel consumption, including vehicle speed and position within the platoon. They also identify directions for future research. Rebelo et al. (2024) provide a detailed literature review with a focus on environmental impacts. They identify a close link between lower energy consumption and a reduction in air pollution emissions. They discuss several field experiments that document platoon performance with different characteristics, and identify the need for additional research to support optimal platoon formation. Li et al. (2022) review the literature on platoon splitting and merging. They propose a three-step framework consisting of protocol design, trajectory planning, and vehicle control to unify the related studies. For each step, future research directions are proposed. Braiteh et al. (2025) provide an extensive survey of research work and software packages for vehicle platooning, with a particular focus on vehicle sequencing and resequencing, which is the topic of our work.
The issues highlighted by these surveys provide important motivation for the present study.

\subsection{Formation, routing, and coordination}  \label{subsec:litFRC}

The literature of vehicle platooning studies several broader general issues, including vehicle formation and routing, the integration of charging operations, and the role of cooperative behavior in platoons shared between several vehicle owners. Larson et al. (2015) study the use of a distributed system of controllers that coordinate the speed of vehicles in order to form platoons. In a simulation study of the German Autobahn system, they demonstrate savings of over 5\% by coordinating thousands of vehicles into many platoons. Larsson et al. (2015) consider a single platoon routing problem with homogeneous vehicles where fuel savings may be increased by deviating from a shortest route. They show that this problem is strongly {\em NP}-hard, and develop heuristics for problems with up to 200 vehicles. van de Hoef et al. (2018) prove that the platoon formation problem is strongly {\em NP}-hard, and develop a heuristic which enables significant fuel savings in a simulation study. When a platoon includes vehicles with multiple owners, the fact that energy savings are different at different positions in the platoon may lead to instability of platoon cooperation.
Sun and Yin (2019) formulate a cooperative game to establish a stable mechanism for distributing those savings between the vehicle owners. Liu et al. (2026) describe a procedure to coordinate en-route charging and vehicle resequencing decisions, and demonstrate its success against benchmark policies. Heterogeneity in charging power and energy consumption rate are both shown to reduce energy reduction, which complicates the formation of multiple platoons.

Collectively, these studies demonstrate that platoon formation, routing, charging coordination, and cooperation mechanisms are computationally challenging and often require heuristic approaches. However, sequencing decisions within an already formed platoon have received substantially less attention, despite their direct influence on energy consumption and fairness.
\subsection{Sequencing and resequencing}  \label{subsec:litsesreseq}

The focus of our work is on the sequencing and resequencing problem with heterogeneous trucks and a single vehicle owner.
Srisomboon and Lee (2021) describe four heuristic vehicle resequencing rules based on remaining SoC values. Based on a simulation study, they recommend a minimum fuel saving before resequencing. The recommended resequencing policy yields approximately 3\% energy reduction in two-vehicle platoons and 6\% savings in six-vehicle platoons.

Guo and Meng (2025) shift attention from energy minimization to fairness by minimizing the variance of vehicle SoC levels. They compare various heuristic rules using real-world route experiments: a brute-force enumeration algorithm, a fixed platoon with only one resequence used, resequencing by smallest SoC, and a local search procedure which swaps the positions of the vehicles with maximum and minimum SoC. Taking into account both heuristic performance and computation time, the latter two rules provide the best balance between solution quality and computational effort.

To address the combinatorial explosion of resequencing decisions, Zheng and Guo (2025) apply Bootstrapped Deep Q-Networks to the platoon resequencing problem. They model the problem with three objectives: minimization of final SoC variance, maximization of the minimum final SoC of any vehicle, and minimization of the number of vehicle position changes. Recognizing that the factorial growth of permutation-based actions becomes excessive for six or more vehicles, they apply their methodology to a platoon with five vehicles and a 197-mile route between Suzhou and Nanjing with five resequencing points. Their computational study shows significant improvements over heuristic rules for all three objectives, although at substantially higher computational cost.

Focusing on minimization of energy variance, Zheng et al. (2025) evaluate five deep reinforcement learning models in a simulation study calibrated with real highway data. The platoon travels along a 253-mile route between Chuzhou and Yancheng, and includes five predetermined points at which vehicle resequencing is possible. The results of this study recommend a Noisy Dueling DQN approach that provides robust performance. Peng et al. (2026) propose a congestion-aware 
platoon resequencing optimization framework for electric vehicles using deep reinforcement learning. Their experimental results show that, compared to existing reinforcement learning methods, the proposed framework reduces the frequency of platoon resequencing by 34.4\% and achieves a 23.6\% reduction in the final standard deviation of the SoC across all vehicles.

Existing studies evaluate their heuristics and reinforcement learning approaches computationally only without analyzing their theoretical performance bounds. In contrast, our work studies the underlying combinatorial optimization structure of sequencing and resequencing
decisions, proposes exact algorithms, and analyzes the worst-case performance of the approximation schemes and heuristic algorithms.

\subsection{Research gap and contributions}  \label{subsec:RGC}

Despite extensive research on platoon formation, routing, charging, and control, the theoretical foundations of sequencing and resequencing remain largely undeveloped.  Consequently, fundamental questions regarding tractability, approximability, and algorithm design remain largely unresolved.
Most resequencing papers optimize only locally and rely on simulation to validate their results. Few consider the natural objective of 
total energy minimization, or worst-case vehicle energy consumption. 
Further, within the recent work that uses  reinforcement learning, solution quality guarantees are absent, optimality is unknown, and computational effort increases rapidly.
Vehicle platoon sequencing shares characteristics with machine scheduling, sequencing (Pinedo 2022), and load-balancing problems Kellerer et al. 2010), including the assignment of heterogeneous entities to ordered positions and the balancing of cumulative costs across agents.
However, the energy-sharing structure of platoons introduces new dependencies that prevent direct application of existing results. Most importantly, sequencing and resequencing are not merely control decisions but fundamental combinatorial optimization problems. Understanding their computational complexity is therefore a prerequisite for the design of scalable exact, approximation, and learning-based methods. Moreover, we develop heuristics with worst-case performance guarantees for several intractable problems and support these results with a computational study. As platoons become larger and journeys longer, the value of such theoretically grounded methods is likely to increase.

As mentioned above, our study of the above literature identifies two  commonly considered objectives. The first is the natural objective of minimizing total energy usage. The second, motivated by the need to balance load between vehicles and avoid low SoC occurrences, is minimization of the maximum energy usage of any vehicle. The remainder of the paper provides the first systematic complexity and algorithmic study of these two problems, which are studied in detail in Sections~\ref{sec-min-total} and \ref{sec:max-min} below, respectively.

\section{Preliminaries} \label{sec:preliminaries}

Section~\ref{subsec:problem} describes the problems studied. Section~\ref{subsec:prelimres} provides  preliminary results that are used at several points in the paper. Section~\ref{subsec:overview} provides an overview of the results in the paper.

\subsection{Problem statement and notation} \label{subsec:problem}

We study the following resequencing problem involving a set of $n$ cooperatively platooning electric vehicles (EVs), denoted by $V=\{1, \ldots, n\}$. These EVs travel together through $m$ road segments, denoted by $1, 2, \ldots, m$. The starting and ending points of segment $i$ are denoted as locations $i$ and $i+1$, respectively, for $i=1, \ldots, m$. Locations 1 and $m+1$ are the common origin (starting point) and common destination (ending point) of all the vehicles, respectively. The travel distance over each segment $i$ is known and denoted by $D_{i}$, for $i=1, \ldots, m$, where we let $D_{\max} = \max_{1 \le i \le m} \{D_i\}$. 

Each vehicle $j$ is characterized by four parameters: (i) battery capacity, denoted by $C_j$, which is the maximum amount of energy (i.e., electricity) its battery can store, (ii) usage rate, denoted by $\delta_j$, which is the amount of energy consumption per mile, (iii) an initial state of charge at the origin, denoted by $S_{j1}$, where the SoC of a vehicle is a value between 0\% and 100\%, and defined as the percentage of the remaining energy relative to its capacity, and (iv) a minimum allowable SoC denoted by $\beta_j$, below which the vehicle would become undrivable. To avoid obvious infeasibility, it is assumed that $\beta_j<S_{j1}$ for each vehicle $j$. We let $\delta_{\max} =\max_{1 \le j \le n} \{\delta_j\}$ and $\delta_{\min} =\min_{1 \le j \le n} \{\delta_j\}$. Different vehicles may have different or identical parameters (i) through (iv). Observe that the maximum driving range of a vehicle $j$ that does not benefit from energy savings as a result of platooning is $[C_j(S_{j1} - \beta_j)/\delta_j]$. Let $Q_{k1} = S_{k1}C_k$, for $1\le k\le n$, which are the initial energy levels of the vehicles, and $Q_{\max} = \max_{1 \le k \le n} \{ S_{k1}C_k\}$.

The vehicles may be resequenced, at each location $i$, for $i=1,\ldots, m$. They then  travel in the resequenced platoon over the road segment from location $i$ to the next location $i+1$, where they are possibly resequenced again. We denote the state of charge of vehicle $j$ after reaching location $i$ by $S_{ji}$.
If a vehicle $j$ travels alone, it  consumes an average of $\delta_j$ units of energy from its battery per unit distance. Consequently, if a vehicle $j$ travels alone on its own from location $i$ to location $i+1$, it consumes $D_i\delta_j$ units of energy, and as a result, its SoC  decreases by 
$\frac{D_{i}\delta_j}{C_j}$. 

However, if $n$ vehicles travel in a platoon, they can save some energy, and the savings for a vehicle depend on its position  in the platoon. If vehicle $j$ is in the $k$th position of the platoon, then the average energy usage per unit distance becomes $\delta_j(1-\eta_{k})$, where $\eta_{k}$ is the usage rate reduction associated with  the $k$th position, and $0\leq \eta_{k}<1$. In general, a later position yields more energy savings than an earlier position, i.e.,  $\eta_{k+1} \geq \eta_{k}$, for $k=1, \ldots, n-1$.  
Now, suppose that vehicle $j$ is at position $j_i$ after being resequenced at location $i$, for $i=1, \ldots, m$, then the total energy consumed by the vehicle by the time it reaches location $i+1$, denoted as $U_{j,i+1}$ and its SoC at location $i+1$ are, respectively, 
\vspace{-0.1in}
\begin{eqnarray} 
U_{j,i+1} & = & U_{ji} + D_i\delta_j(1-\eta_{j_i}) = \delta_j\sum_{h=1}^i D_h(1-\eta_{j_h}), \quad \mbox{and} \label{Usage-def-i}\\
S_{j,i+1} & = & S_{ji} - \frac{D_{i}\delta_j}{C_j}(1-\eta_{j_{i}}) =  S_{j1} - \frac{\delta_j}{C_j}\left[\sum_{h=1}^{i}D_h(1-\eta_{j_{h}})\right].  \label{SoC-def-i}
\end{eqnarray}
 As a result, the total energy usage by vehicle $j$ and the final SoC of vehicle $j$ after reaching the destination location $m+1$, are, respectively, 
 \vspace{-0.1in}
\begin{eqnarray} 
U_{j,m+1} & = & \delta_j\sum_{i=1}^m D_i(1-\eta_{j_i}) = \delta_j\sum_{i=1}^m D_i - \delta_j\sum_{i=1}^m D_i\eta_{j_i}, \quad \mbox{and} \label{Usage-def}\\
S_{j,m+1} & = & S_{j1} - \frac{\delta_j\left[\sum_{i=1}^{m}D_i\right]}{C_j} + \frac{\delta_j \left[\sum_{i=1}^{m}(D_{i}\eta_{j_{i}})\right]}{C_j}. \label{SoC-def} 
\end{eqnarray}

The Vehicle Platooning problem finds a sequence of the vehicles at each location (i.e., a solution that resequences the platoon at each location) such that a certain objective function is optimized. We consider two specific problems: 
\begin{itemize}[itemsep=1pt]
\item Problem~1: Minimize the total energy usage of the vehicles, i.e.,  $\sum_{j=1}^n U_{j,m+1}$, subject to the constraint that each vehicle's final SoC is at least at a minimum required level, i.e., $S_{j,m+1}\ge \beta_j$, for $j=1, \ldots, n$, where $0\leq \beta_j< S_{j1}$ is the lowest allowable SoC for vehicle $j$.
\item Problem~2: Minimize the maximum energy usage  of the vehicles, i.e., $\max\{U_{j,m+1}~|~j=1, \ldots, n\}$, subject to the same constraint as above.
\end{itemize}

We observe that the objective of Problem~1 is equivalent to maximizing the total usage savings of the vehicles that results from platooning, i.e., maximizing $\sum_{j=1}^n \delta_j \left[\sum_{i=1}^{m}(D_{i}\eta_{j_{i}})\right]$. Further, the  constraint on both these problems ensures that the total energy usage by each vehicle is no more than a given threshold, i.e., $U_{j,m+1} \le A_j$, where $A_j=(S_{j1}-\beta_j)C_j$ is the allowable energy usage by the vehicle, for $j=1, \ldots, n.$ 

To provide clear definitions for the variety of vehicle platooning problems discussed in our work, we introduce the following
three field notation $\alpha \; | \; \beta \; | \; \gamma$.
In the first field, we use $n$ (respectively, $\bar n$) to denote that the number of vehicles is arbitrary, i.e. part of the instance input, (resp., fixed), and similarly for the number of road segments denoted by $m$ and $\bar m$. The second field contains special case assumptions and constraints commonly found in practice, for example $\delta_j = \delta$ specifies that all vehicles have the same energy usage rate, as typically occurs with a homogeneous fleet of vehicles in the platoon.
The third field specifies the objective being considered, i.e. Problem~1 or Problem~2 defined above. For conciseness, we use $``TU"$ for the minimization of total usage objective in Problem~1, and $``MU"$ for the minimization of the maximum usage in Problem~2. As an example of our three-field notation, problem~$n, \bar m\: | \: \beta_j = \beta \; | \; TU$ requires the minimization of total energy usage by a platoon with an arbitrary number of vehicles, operating over a fixed number of road segments, where each vehicle has the same minimum required final SoC. All the problems contain the constraint that the final SoC of each vehicle $j$ is at least $\beta_j$. For conciseness, this constraint is not added explicitly in the 3-field notation.

The purpose of this work is to (i) investigate the solvability of various vehicle platooning problems parametrized by 
$\alpha \; | \; \beta \; | \; \gamma$, in order to provide the most efficient algorithm possible, or where necessary a proof of intractability, (ii) provide fast heuristics for several intractable cases of the problems and analyze their worst-case performance, and (iii) provide a fully polynomial time approximation scheme (FPTAS) for two general cases of the problems.

\subsection{Preliminary results} 
\label{subsec:prelimres}

Some results for two given sequences of positive numbers are used multiple times below.

\begin{lemma} \label{lemma1}
Given two sequences of positive numbers $(x_1, \ldots, x_h)$ and $(y_1, \ldots, y_h)$, where $x_j$'s are in a non-increasing order, i.e., $x_1\ge \cdots \ge x_h$, and $y_j$'s are in a non-decreasing order, i.e., $y_1\le \cdots \le y_h$. For any permutation $([1], \ldots, [h])$ of $(1, \ldots, h)$, we have: 
\vspace{-0.1in}
\begin{eqnarray}
& & (i) \quad \max\{x_jy_j ~|~ j=1, \ldots, h\} \le  \max\{x_jy_{[j]} ~|~j=1, \ldots, h\}, \label{xy-result1} \\
& & (ii) \quad \max\{x_j+y_j ~|~ j=1, \ldots, h\} \le \max\{x_j+y_{[j]} ~|~j=1, \ldots, h\}, \label{xy-result2} \\
& & (iii) \quad \sum_{j=1}^h x_jy_j  \le  \sum_{j=1}^h x_jy_{[j]}. \label{xy-result3}
\end{eqnarray}
\end{lemma}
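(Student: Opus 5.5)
All three parts will be proved by the same exchange (bubble-sort) argument. Given any permutation $([1],\ldots,[h])$ that is not the identity, there exist indices $a<b$ with $[a]>[b]$; we may take them adjacent in the sense that $y_{[a]}\ge y_{[b]}$. Swapping the values assigned to positions $a$ and $b$ strictly reduces the number of inversions of the permutation. We will show that this swap never increases the relevant objective. Iterating then reaches the identity permutation after finitely many swaps, which gives each inequality.

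Concretely, put $p=y_{[b]}\le q=y_{[a]}$ and recall $x_a\ge x_b>0$. Before the swap, positions $a,b$ hold the pairs $(x_a,q)$ and $(x_b,p)$; after it they hold $(x_a,p)$ and $(x_b,q)$. All other terms are unchanged.
\begin{itemize}
\item For (iii), the change in the sum is
\[
(x_ap+x_bq)-(x_aq+x_bp)=-(x_a-x_b)(q-p)\le 0.
\]
\item For (i), it suffices to show $\max\{x_ap,x_bq\}\le \max\{x_aq,x_bp\}$. Positivity gives $x_ap\le x_aq$ and $x_bq\le x_aq$, so the new pair is bounded by $x_aq$, which is among the old pair. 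Hence the overall maximum does not increase.
\item For (ii), similarly $x_a+p\le x_a+q$ and $x_b+q\le x_a+q$.
\end{itemize}

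The only delicate point is to make sure the exchange argument terminates at the identity, not merely at some sorted arrangement. Ties among the $y$'s could let a permutation give the same $y$-sequence as the identity without being the identity itself. This is harmless, because such a permutation yields exactly the values $x_jy_j$ (respectively $x_j+y_j$) and so equal objectives. We therefore swap only when $[a]>[b]$, which strictly decreases the inversion count. At termination $[j]=j$ for all $j$, and the chain of non-increasing objective values yields (i)--(iii). Alternatively, (iii) is the classical rearrangement inequality and could simply be cited, but the uniform swap argument covers all three parts at once.
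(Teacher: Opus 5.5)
Your proposal is correct and is essentially the paper's exchange argument: the paper swaps the first position $k$ with $\pi_k\ne k$ against the position $u$ holding value $k$, which is a particular inverted pair in your sense. It then bounds the new pair by $x_k y_{\pi_k}$ exactly as you bound it by $x_a q$; the only difference is that the paper terminates because the matched prefix grows, while you use the strictly decreasing inversion count, and both work.
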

{\bf Proof:} We prove these results by showing that given any permutation of $(1, \ldots, h)$, denoted as $\pi=(\pi_1, \ldots, \pi_h)$,  we can iteratively convert it to $\pi_0=(1, \ldots, h)$ such that in each step of the conversion process, each of the three measures, namely, the maximum pairwise product, the maximum pairwise sum, and the sum of the pairwise products, remains the same or becomes smaller. Suppose that $\pi$  differs from $\pi_0$ starting from the $k$th element, i.e., $\pi_i=i$ for $i=1,\ldots, k-1$, but $\pi_k\ne k$, for some $k\ge 1$. Further suppose that $\pi_k=q$ and $\pi_u=k$. Clearly, $q>k$ and $u>k$. We generate a new permutation $\pi'$ from $\pi$ by replacing the $k$th element in $\pi$ by $k$ and the $u$th element in $\pi$ by $q$.  Thus, $\pi'_k=k<\pi_k$ and $\pi'_u=q>k$, and $\pi'_i=\pi_i$ for $i\in \{1, \ldots, h\}\setminus \{k, u\}$. This further implies $y_{\pi'_k}=y_k=y_{\pi_u}\le y_{\pi_k}$, $y_{\pi'_u}=y_q=y_{\pi_k}$, and $y_{\pi'_i}=y_{\pi_i}$
for $i\in \{1, \ldots, h\}\setminus \{k, u\}$. This, together with the fact that $x_k\ge x_u$, gives
\vspace{-0.2in}
\begin{eqnarray*}
\max\{x_ky_{\pi'_k}, x_uy_{\pi'_u}\} &\le& \max\{x_ky_{\pi_k}, x_uy_{\pi_k}\}
= x_ky_{\pi_k} \le \max\{x_ky_{\pi_k}, x_uy_{\pi_u}\}, \\
\max\{x_k+y_{\pi'_k}, x_u+y_{\pi'_u}\} &\le& \max\{x_k+y_{\pi_k}, x_u+y_{\pi_k}\}
= x_k+y_{\pi_k} \le \max\{x_k+y_{\pi_k}, x_u+y_{\pi_u}\}, \\
x_ky_{\pi'_k}+x_uy_{\pi'_u} &=& x_ky_{\pi_u}+x_uy_{\pi_k} \\
  & = &  x_ky_{\pi_k} + x_uy_{\pi_u} - (x_k-x_u)(y_{\pi_k}-y_{\pi_u})  \le  x_ky_{\pi_k} + x_uy_{\pi_u}.
\end{eqnarray*}
These results imply that
\vspace{-0.2in}
\begin{eqnarray*}
\max\{x_jy_{\pi'_j} ~|~ j=1, \ldots, h\} &=& \max\left\{x_ky_{\pi'_k}, x_uy_{\pi'_u}, \max\{x_jy_{\pi'_j} ~|~ j\in\{1,\ldots,h\}\setminus\{k,u\}\right\} \\
&\le & \max\left\{x_ky_{\pi_k}, x_uy_{\pi_u}, \max\{x_jy_{\pi_j} ~|~j\in \{1,\ldots,h\}\setminus\{k,u\}\right\}\\
&=& \max\{x_jy_{\pi_j} ~|~ j=1, \ldots, h\} 
\end{eqnarray*}
Similarly, we can show that
\vspace{-0.2in}
\begin{eqnarray*}
\max\{x_j+y_{\pi'_j} ~|~ j=1, \ldots, h\} & \le &
\max\{x_j+y_{\pi_j} ~|~ j=1, \ldots, h\}, \quad \mbox{and} \\
\sum_{j=1}^h x_jy_{\pi'_j}  &\le & \sum_{j=1}^h x_jy_{\pi_j}. 
\end{eqnarray*}
Hence, in going from $\pi$ to $\pi'$, each of the three measures remains the same or becomes smaller. Continuing this, we eventually arrive at permutation $\pi_0$ without increasing any of the three measures. \blot

\subsection{Overview of the results} \label{subsec:overview}

Table~\ref{tab:results} provides an overview of the results in the paper. 

\begin{table}
\tiny
\centering
\caption{Overview of Algorithm, Complexity, and Heuristic Results for Vehicle Platooning.}
\begin{tabular}{||c|c|c||c|c|c||} \hline
$\alpha$ & $\beta$ & $\gamma$ & Result &
Time / Worst-Case Bound & Reference \\ \hline
&&&&& \\[-9pt] \hline
$n,1$ & & $TU$ & Optimal & $O(n^2)$ & Proposition~\ref{prop2.1} \\
$n,1$ & $\delta_j = \delta$ & $TU$ & Optimal & $O(n \log n)$ & Proposition~\ref{prop:3.1-Rule1} \\
$n,1$ & $A_j = A$ & $TU$ & Optimal & $O(n \log n)$ & Proposition~\ref{prop:3.1-Rule2} \\
$n,2$ & $C_j = C, \delta_j = \delta, S_{j1} = S, \beta_j = \beta$ & $TU$ & Optimal & $O(n)$ & Proposition~\ref{prop3} \\
$n,2$ & $C_j = C, \delta_j = \delta$ & $TU$ & Strongly {\em NP}-hard & & Proposition~\ref{prop2.2.2} \\
$n,2$ & $C_j = C, S_{j1} = S, \beta_j = \beta$ & $TU$ & Strongly {\em NP}-hard & & Proposition~\ref{prop2.2.3} \\
$n,2$ & $\delta_j = \delta, S_{j1} = S, \beta_j = \beta$ & $TU$ & Strongly {\em NP}-hard & & Proposition~\ref{prop2.2.3} \\
$n,\bar{m}\ge 3$  & $C_j=C, \delta_j = \delta, S_{j1} = S, \beta_j = \beta$ & $TU$ & Open & & Section~\ref{sec4-openprob} \\ 
$\bar n \! \ge \! 2,m$ & $C_j = C, \delta_j = \delta, S_{j1} = S, \beta_j = \beta$ & $TU$ & Ordinarily {\em NP}-hard & $O(mnn!Q^n_{\max})$ & Propositions~\ref{prop:2.3bnpc},\ref{prop:FJ} \\
$2,m$ & $C_j = C, \delta_j = \delta, S_{j1} = S, \beta_j = \beta$ & $TU$ & FPTAS & $O(m [(1 \!\! + \!\! 2m/\epsilon) 
\ln{(m D_{\max} \delta_{\max})}]^n n!)$ & Proposition~\ref{DP1fptas} \\
$n,m$ & $C_j = C, \delta_j = \delta, S_{j1} = S, \beta_j = \beta$ & $TU$ & Strongly {\em NP}-hard & & Proposition~\ref{prop:2.5-snpc} \\
$n,m$ & $S_{j1}C_j \ge \bar L$ & $TU$ & Optimal & $O(n \log n)$ & Proposition~\ref{prop-rule4} \\ \hline
$n,1$ & & $MU$ & Optimal & $O(n^2)$ & Proposition~\ref{prop:3.1-MaxMatch} \\
$n,2$ & $C_j = C, \delta_j = \delta, S_{j1} = S, \beta_j = \beta$ & $MU$ & Optimal & $O(n)$ & Proposition~\ref{prop:3.2-Rule1} \\
$n,2$ & $C_j = C, \delta_j = \delta, S_{j1} = S$ & $MU$ & Strongly {\em NP}-hard & & Corollary~\ref{prop:3.2-SNP1} \\
$n,2$ & $C_j = C, \delta_j = \delta, \beta_j = \beta$ & $MU$ & Strongly {\em NP}-hard & & Corollary~\ref{prop:3.2-SNP1} \\
$n,2$ & $C_j = C, S_{j1} = S, \beta_j = \beta$ & $MU$ & Strongly {\em NP}-hard & & Corollary~\ref{prop:3.2-SNP2} \\
$n,2$ & $\delta_j = \delta, S_{j1} = S, \beta_j = \beta$ & $MU$ & Strongly {\em NP}-hard & & Corollary~\ref{prop:3.2-SNP2} \\
$n,2$ & $S_{j1}C_j \ge \bar L, \delta_j = \delta$ & $MU$ & Optimal & $O(n)$ & Proposition~\ref{prop:3.2-easy} \\
$n,2$ & $C_j=C, S_{j1} = S, \beta_j = \beta, SC \ge \bar L$ & $MU$ & Strongly {\em NP}-hard & & Proposition~\ref{prop:3.2-SNP3} \\
$n,\bar{m}\ge 3$ & $C_j=C, \delta_j = \delta, S_{j1} = S, \beta_j = \beta$ & $MU$ & Open & & Section~\ref{sec5-openprob} \\
$\bar n \! \ge \! 2,m$ & $C_j=C, \delta_j = \delta, S_{j1} = S, \beta_j = \beta$ & $MU$ & Ordinarily {\em NP}-hard & & Proposition~\ref{prop:3.4-ONP} \\
$\bar n, m$ & & $MU$ & Ordinarily {\em NP}-hard & $O(mn!Q_{\max}^n)$ & Proposition~\ref{prop:3.4-Enumerate} \\
$\bar n, m$ & & $MU$ & FPTAS & $O(m[(1 + 2m/\epsilon) \ln(Q_{\max})]^n n!)$ & Corollary~\ref{cor:ESfptas} \\
$\bar n, m$ & $C_j = C, \delta_j = \delta, S_{j1} = S, \beta_j = \beta$ & $MU$ & Strongly {\em NP}-hard & & Proposition~\ref{prop:3.5snpc} \\
$n, m$ & $C_j \! = \! C, \delta_j \! = \! \delta, S_{j1} \! = \! S \! \ge \! \bar L/C, \beta_j \! = \! \beta$ & $MU$ & Strongly {\em NP}-hard & & Proposition~\ref{prop:3.5snpc} \\
$n, m$ & $S_{j1}C_j \ge \bar L, \delta_j = \delta$ & $MU$ & Strongly {\em NP}-hard,  Heuristic & $1 + (\eta_n - \eta_1)/(1 - \eta_1)$ & Proposition~\ref{prop:3.5-heuristic} \\
$n, m$ & $S_{j1}C_j \ge \bar L$ & $MU$ & Strongly {\em NP}-hard, Heuristic & $\frac{\max_{1 \le j \le n} \{\delta_j(1 - \eta_j)\}}{\min_{1 \le j \le n} \{\delta_j(1 - \eta_j)\}}$ & Proposition~\ref{prop:3.5-heu2} \\
$n, m$ & $\pi_1, pc \le K$ & $MU$ & Optimal & $O((\log n \! \! + \!\! m \log K)n(2K)^{(2m^2 K + m)})$ & Proposition~\ref{prop:swap} \\ \hline
$n, \bar m$ & $(\eta_1,\ldots,\eta_K)$ & $TU, \! MU $ & Optimal & $O(mn^{Km+1})$ & Proposition~\ref{prop-6.2-1} \\
$n, \bar m$ & 
& $TU, \! MU $ & Heuristic & $1 + (\eta_n - \eta_K)/(1 - \eta_n)$ & Proposition~\ref{prop-6.2-2} \\ \hline
\end{tabular}
{\footnotesize \singlespacing \raggedright \textit{Note:} The following assumptions and constraints that appear in the $\beta$ field in the table are defined precisely below. \\``$S_{j1}C_j\ge \bar{L}$", ``$SC\ge \bar{L}"$ and ``$S\ge \bar{L}/C$" all represent the assumption that each vehicle's initial level of energy is sufficiently large that the constraint on its final SoC is always satisfied in any solution.\\ ``$\pi_1, pc\le K$" represents the constraint that at most $K$ position changes are allowed from an initial platoon sequence $\pi_1$. \\``$(\eta_1, \cdots, \eta_K)$" represents the case where there is some $K$ such that $\eta_1\le \cdots \le \eta_K=\eta_{K+1}=\cdots=\eta_{n}$. \par}
\label{tab:results}
\setlength{\unitlength}{1pt}
\end{table}

\section{Minimizing Total Energy Usage} \label{sec-min-total}

In this section, we consider various cases of problem $\alpha \; | \; \beta \; |TU$ by either providing an efficient algorithm or proving that it is {\em NP}-hard for each case. For ease of presentation, we define $\lambda_k=1-\eta_k$, for $k=1, \ldots, n$. Thus, $\lambda_1 \ge \lambda_2 \ge \cdots \ge \lambda_n$, and $\lambda_{k}$ can be viewed as the realized percentage of the usage rate for a vehicle at position $k$. Given a solution, i.e., specific positions occupied by each  vehicle $j$ over the $m$ road segments, denoted as $(j_1, \ldots, j_m)$, we can rewrite the total energy usage of the vehicles as  $\sum_{j=1}^n \left(\delta_j\sum_{i=1}^{m}D_i\lambda_{j_i}\right)$. 
We can also rewrite the minimum allowable SoC  constraint of the problem as: $S_{j1} - \frac{\delta_j}{C_j}\sum_{i=1}^mD_{i}\lambda_{j_i} \ge \beta_j$, for $j=1, \ldots, n$.

\subsection{Problem with $m=1$ and Arbitrary $n$} \label{sec2.1}

When there is only a single road segment, i.e., the vehicles travel from location 1 as their origin  to location 2 as their destination, the problem is to find a vehicle sequence for the platoon at location 1, i.e., a permutation of the $n$ positions, $(1, 2, \ldots, n)$, denoted as $([1], [2], \ldots, [n])$ such that the total energy usage by the vehicles for traveling from location 1 to location 2, i.e., $\sum_{j=1}^n \left(\delta_jD_1\lambda_{[j]}\right)$, is minimized subject to the constraint that $S_{j1} - \frac{\delta_j}{C_j}D_1\lambda_{[j]} \ge \beta_j$, for $j=1, \ldots, n$. 

We propose a simple algorithm to solve this problem.
The idea of this algorithm is that it is always optimal to assign a feasible vehicle with a larger usage rate to an earlier position. 

\medskip

\noindent {\bf Algorithm Delta} \\
\noindent {\bf Step 0:} Reindex the vehicles in non-decreasing order of $\delta_j$ (and in  increasing order of the initial vehicle indices in case of identical $\delta_j$'s). Define $E_k$, for $k=1, \ldots, n$, to be the subset of feasible vehicles for position $k$, i.e., $E_k=\{j~|~ S_{j1} - \frac{\delta_j}{C_j}D_1\lambda_{k} \ge \beta_j, j=1, \ldots, n\}$. Clearly, $E_1\subseteq E_2 \cdots \subseteq E_n$. Set position counter $q=1$ and define the set of scheduled vehicles $V=\phi$.\\
\noindent {\bf Step 1:} If $E_q$ is empty, then the problem is infeasible. Otherwise, assign the vehicle in $E_q$ with the smallest index, denoted as vehicle $u$, to position $q$. \\
\noindent {\bf Step 2:} If $q=n$, stop. Otherwise, update $V = V\cup \{u\}$,  $E_{q+1} = E_{q+1}\setminus V$, and $q=q+1$. Go to Step 1. 

\begin{proposition} \label{prop2.1}
Algorithm Delta solves problem $n,1~|~|~TU$ in $O(n^2)$ time by either proving the problem to be infeasible or finding an optimal solution when the problem is feasible.  
\end{proposition}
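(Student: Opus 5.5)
The plan is an exchange argument. Greedy choices are compared position by position with an arbitrary feasible solution, and the same argument gives both the feasibility and the optimality claims. The running time is handled separately at the end.

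Throughout, vehicles are indexed as in Step~0, so that $\delta_1\le\cdots\le\delta_n$, and $\lambda_1\ge\cdots\ge\lambda_n$. The objective is $D_1\sum_j \delta_j\lambda_{[j]}$. By part (iii) of Lemma~\ref{lemma1}, without the SoC constraints one would pair the largest $\lambda$ (the earliest position) with the smallest $\delta$. Algorithm Delta does exactly this, subject to feasibility, by choosing the smallest-index feasible remaining vehicle. The key structural fact is that the feasible sets are nested, $E_1\subseteq E_2\subseteq\cdots\subseteq E_n$, because $\lambda_k$ is non-increasing in $k$. Hence a vehicle feasible for position $q$ is feasible for every later position.

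\emph{Infeasibility claim.} Suppose the algorithm stops at position $q$ because $E_q\setminus V=\emptyset$. Then every vehicle of $E_q$ already occupies one of positions $1,\ldots,q-1$, so $|E_q|\le q-1$. In any feasible sequence, however, positions $1,\ldots,q$ must be filled by $q$ distinct vehicles. These vehicles belong to $E_1\cup\cdots\cup E_q=E_q$ by nestedness, which is a contradiction. So the problem is infeasible whenever the algorithm reports so.

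\emph{Optimality when feasible.} Take an optimal feasible solution $\sigma$ that agrees with the greedy sequence on positions $1,\ldots,q-1$, with $q$ as large as possible, and suppose $q\le n$. Let greedy put vehicle $u$ at position $q$, while $\sigma$ puts $v\neq u$ there and puts $u$ at some position $p>q$. Then $v$ is a remaining vehicle feasible for position $q$. Since greedy picked the smallest index, $u<v$, so $\delta_u\le\delta_v$.

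Now swap $u$ and $v$ in $\sigma$. The swap is feasible: $u\in E_q$, and $v\in E_q\subseteq E_p$. The objective changes by
\[
D_1\bigl[\delta_u\lambda_q+\delta_v\lambda_p-\delta_v\lambda_q-\delta_u\lambda_p\bigr]=D_1(\delta_u-\delta_v)(\lambda_q-\lambda_p)\le 0 .
\]
This is the pairwise step from the proof of Lemma~\ref{lemma1}. The new solution is therefore optimal and agrees with greedy on one more position, contradicting the maximality of $q$. This also shows that greedy never gets stuck when a feasible solution exists, consistent with the infeasibility claim above.

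\emph{Running time.} Sorting takes $O(n\log n)$. Computing membership of each vehicle in each $E_k$ costs $O(1)$ per pair, so $O(n^2)$ in total; alternatively, a single threshold position per vehicle suffices. At each of the $n$ positions the algorithm scans at most $n$ indices for the smallest unused feasible vehicle. The total is $O(n^2)$.

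The main subtlety is the feasibility of the swap, which relies entirely on the nestedness of the $E_k$. I would also check the direction of the pairing carefully, since the text's remark about ``larger usage rate to an earlier position'' appears to be the reverse of what the minimization and the smallest-index rule require.
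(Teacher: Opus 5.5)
Your proof is correct and follows the paper's own argument. It uses the same position-by-position exchange: swap the greedy vehicle $u$ with $\sigma$'s vehicle $v$, get feasibility from nestedness $E_q\subseteq E_p$, and get the cost comparison from the sign of $(\delta_u-\delta_v)(\lambda_q-\lambda_p)$; the $O(n^2)$ accounting is also the same. Your explicit counting argument for the infeasibility claim ($|E_q|\le q-1$, versus the $q$ distinct vehicles of $E_q$ that positions $1,\ldots,q$ require) is a useful addition, since the paper's proof only treats optimality and leaves that part implicit. You are also right that the informal remark about assigning larger usage rates to earlier positions has the direction reversed.
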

{\bf Proof:} 
We prove the optimality of the algorithm by showing that any feasible solution $\pi$ which differs from the solution  generated by this algorithm can be converted into the one generated by the algorithm without increasing the total energy usage. Suppose that in $\pi$, the vehicles assigned to the first $p$ positions, for some $0\leq p\leq n-2$, are the same as in the solution  generated by this algorithm, but the vehicle assigned to position $p+1$, denoted as vehicle $j_{p+1}(\pi)$, is different from that in the solution generated by the algorithm, which is denoted as vehicle $j^*_{p+1}$. 
It can be seen that both vehicles $j_{p+1}(\pi)$ and $j^*_{p+1}$ must be in $E_{p+1}$ updated in Step 2 when $q=p$. The set of vehicles in $E_{p+1}$ immediately before the algorithm goes back to Step 1 is the set of eligible vehicles for position $p+1$. Thus, by definition, $j^*_{p+1} < j_{p+1}(\pi)$, and hence $\delta_{j^*_{p+1}}\leq \delta_{j_{p+1}(\pi)}$. Suppose that in $\pi$, vehicle $j^*_{p+1}$ is assigned to position $k$, where $k$ must be greater than $p+1$. Hence, $\lambda_{p+1}\geq \lambda_{k}$. 
Now, we modify solution $\pi$ by swapping the positions of vehicles $j_{p+1}(\pi)$ and $j^*_{p+1}$, i.e., assign $j^*_{p+1}$ to position $p+1$ and $j_{p+1}(\pi)$ to position $k$. The resulting solution is still feasible because $j^*_{p+1}\in E_{p+1}$ and $j_{p+1}(\pi)\in E_{p+1}\subseteq E_{k}$. The total energy consumption in the modified solution is reduced by
\vspace{-0.1in}
\begin{eqnarray}
& & D_1[\delta_{j_{p+1}(\pi)}\lambda_{p+1} + \delta_{j^*_{p+1}}\lambda_{k} - (\delta_{j_{p+1}(\pi)}\lambda_{k} + \delta_{j^*_{p+1}}\lambda_{p+1})] \nonumber \\
& & = D_1(\delta_{j_{p+1}(\pi)}-\delta_{j^*_{p+1}})(\lambda_{p+1} - \lambda_{k}) \geq 0. \label{eq-delta}
\end{eqnarray}
We repeat the above procedure to modify $\pi$ until it becomes exactly the same as the solution generated by the algorithm. Step~0 of the algorithm requires $O(n \log n)$ time. 
The other two steps run a total of  $n$ iterations, and in each iteration, it takes at most $O(n)$ time to update $V$ and $E_{q+1}$. Thus, the total time required by the algorithm is $O(n^2)$.  \blot

\medskip

Next, we show that there is a more efficient algorithm which solves the problem when (i) the energy usage rates of the vehicles are identical, i.e., $\delta_j\equiv \delta$, or (ii) when the allowable usages of the vehicles are identical, i.e.,  $A_j\equiv A$, where $A_j=C_j(S_{j1}-\beta_j)$. 
In Case (i), if the problem is feasible, then the total usage of the vehicles in any solution is a constant, $\sum_{j=1}^n D_1\delta(1-\eta_j)$, independent of how the vehicles are sequenced. Thus, the problem reduces to finding a feasible solution if it exists. The following greedy rule is optimal.
\bigskip

\noindent {\bf Rule 1:} Reindex the vehicles in non-increasing order of  $A_j$ values, and assign vehicle $j$ to the $j$th position, for $j=1, \ldots, n$. 

\begin{proposition}  \label{prop:3.1-Rule1}
Rule 1 solves problem $n,1~|~\delta_j=\delta~|~TU$ in $O(n\log n)$ time by either showing the infeasibility of the problem or finding an optimal solution if the problem is feasible. 
\end{proposition}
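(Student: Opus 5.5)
With $\delta_j\equiv\delta$ and $m=1$, every feasible sequence has the same total usage $\delta D_1\sum_{k=1}^n\lambda_k$. Optimality therefore reduces to feasibility, and the whole proof is the claim that Rule 1 produces a feasible sequence whenever one exists. The running time is immediate: sorting by $A_j$ takes $O(n\log n)$, and checking the $n$ constraints afterwards takes $O(n)$.

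First I restate feasibility for this case. Vehicle $j$ can occupy position $k$ exactly when $\delta D_1\lambda_k \le A_j$, where $A_j=C_j(S_{j1}-\beta_j)$. Write $w_k=\delta D_1\lambda_k$, so that $w_1\ge w_2\ge\cdots\ge w_n$. A sequence is then feasible if and only if $w_k \le A_{v_k}$ for every position $k$, where $v_k$ is the vehicle placed at position $k$.

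The key step is an exchange argument, in the style of the proof of Proposition~\ref{prop2.1}. Take any feasible sequence $\pi$ that differs from the Rule 1 sequence. Let $k$ be the first position where they differ. Rule 1 places vehicle $k$ there (vehicles reindexed so that $A_1\ge\cdots\ge A_n$), while $\pi$ places some vehicle $u$ there, and vehicle $k$ sits at some later position $p>k$ in $\pi$. Positions $1,\ldots,k-1$ agree, so $u>k$, which gives $A_u\le A_k$. Now swap $u$ and $k$ in $\pi$.
\begin{itemize}
\item Vehicle $k$ at position $k$ is feasible because $w_k\le A_u\le A_k$.
\item Vehicle $u$ at position $p$ is feasible because $w_p\le w_k\le A_u$.
\end{itemize}
The new sequence is feasible and agrees with Rule 1 on one more position. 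Repeating this at most $n$ times turns $\pi$ into the Rule 1 sequence, so that sequence is feasible. Conversely, if the Rule 1 sequence violates some constraint, no feasible sequence can exist, and the rule correctly reports infeasibility.

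One could also derive this from Lemma~\ref{lemma1}(ii) by taking $x$ to be the non-increasing sequence $w$ and $y_j=-A_j$, which is non-decreasing in the Rule 1 indexing. Then $\max_j(w_j-A_j)$ is minimized by the identity pairing, and this maximum is at most $0$ precisely when the sequence is feasible. However, the lemma assumes positive entries, so the direct swap argument is cleaner and is the one I would present.

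There is no real obstacle in this proof. The only points needing care are that ties in $A_j$ do no harm, and that both swapped vehicles remain feasible, which relies on the monotonicity of $\lambda_k$.
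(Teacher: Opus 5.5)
Your proof is correct and follows the same approach as the paper. Every feasible sequence has total usage $\delta D_1\sum_{k}\lambda_k$, so optimality reduces to feasibility, and pairing vehicles in non-increasing order of $A_j$ with positions in non-increasing order of $\lambda_k$ finds a feasible sequence whenever one exists; the paper only asserts this, while your exchange argument ($w_k\le A_u\le A_k$ and $w_p\le w_k\le A_u$) actually proves it.
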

{\bf Proof:} The quantity $A_j=C_j(S_{j1}-\beta_j)$ is the allowed amount of energy vehicle $j$ can use without violating the maximum usage constraint, for $j=1, \ldots, n$. Thus, it is optimal to assign the vehicles in non-increasing order of $A_j$ to the positions in non-increasing order of the actual usage rates $\lambda_j$. If the solution found is infeasible, then the problem is infeasible. The main computational requirement is sorting the vehicles.  \blot

When the allowable usages of the vehicles $A_j$ are identical (which includes the case where $C_j=C$, $S_{j1} = S_1$ and $\beta_j = \beta$), then it yields a feasible solution, if one exists, when a vehicle with a larger $\delta_j$ is assigned to a later position. In fact, this rule is also optimal if the problem is feasible. This rule is formally stated as follows.  

\bigskip

\noindent {\bf Rule 2:} Reindex the vehicles in non-decreasing order of  $\delta_j$ values, and assign vehicle $j$ to the $j$th position, for $j=1, \ldots, n$. 

\begin{proposition}  \label{prop:3.1-Rule2}
Rule 2 solves problem $n,1~|~A_j=A~|~TU$ in $O(n\log n)$ time by either showing the infeasibility of the problem or finding an optimal solution if one exists. 
\end{proposition}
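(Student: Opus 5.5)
With $A_j\equiv A$, vehicle $j$ is feasible at position $k$ exactly when $\delta_j D_1\lambda_k\le A$. I will prove two things about Rule~2: it finds a feasible solution whenever one exists, and that solution minimizes total usage. Both claims will be established by an exchange argument.

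\emph{Feasibility.} Take any feasible assignment $\pi$. If $\pi$ is not the Rule~2 sequence, it contains two vehicles $a,b$ with $\delta_a>\delta_b$, where $a$ sits at position $p$ and $b$ at a later position $k>p$, so $\lambda_p\ge\lambda_k$. Swap them, putting $b$ at $p$ and $a$ at $k$. Vehicle $a$ now uses $\delta_a D_1\lambda_k\le \delta_a D_1\lambda_p\le A$, since $\pi$ was feasible. Vehicle $b$ now uses $\delta_b D_1\lambda_p<\delta_a D_1\lambda_p\le A$. So the swapped assignment is feasible. Each swap strictly reduces the number of inversions with respect to non-decreasing $\delta$, so after finitely many swaps we reach the Rule~2 sequence, which is therefore feasible. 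Contrapositively, if the Rule~2 sequence is infeasible, no feasible solution exists.

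\emph{Optimality.} The swap above changes total usage by
\[
D_1\bigl[(\delta_a\lambda_k+\delta_b\lambda_p)-(\delta_a\lambda_p+\delta_b\lambda_k)\bigr]=-D_1(\delta_a-\delta_b)(\lambda_p-\lambda_k)\le 0,
\]
which is the same computation as in (\ref{eq-delta}). Hence no swap increases the objective, and the Rule~2 sequence is at least as good as any feasible $\pi$.

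This optimality step can equivalently be read off Lemma~\ref{lemma1}(iii). Take $x_k=\lambda_k$, which is non-increasing, and $y$ the sorted $\delta$'s, which are non-decreasing. The lemma then says that pairing the smallest $\delta$ with the largest $\lambda$ minimizes $\sum_k \lambda_k\delta_{[k]}$. Lemma~\ref{lemma1}(i), with the same $x$ and $y$, also gives feasibility directly. Rule~2 minimizes $\max_k\lambda_k\delta_{[k]}$ over all permutations, and feasibility is exactly the condition that this maximum is at most $A/D_1$.

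Ties in $\delta$ are harmless: vehicles with equal $\delta_j$ are interchangeable both for feasibility and for the objective. The running time is dominated by sorting, $O(n\log n)$, plus an $O(n)$ feasibility check.

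The only real subtlety is that Algorithm~Delta puts small $\delta$ early, whereas here small $\delta$ goes late. I need to make sure the lemma is applied with the orientation that minimizes rather than maximizes. With $\lambda$ non-increasing and the $\delta$'s non-decreasing, the anti-sorted pairing is indeed the minimizer, so both lemma parts apply with the same $x$ and $y$.
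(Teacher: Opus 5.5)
Your proposal is correct and is essentially the paper's argument. The paper uses Lemma~\ref{lemma1}(i) to show that Rule~2 minimizes the maximum vehicle usage, which settles feasibility because the budget $A$ is common, and Lemma~\ref{lemma1}(iii) to show that Rule~2 minimizes total usage. Your pairwise exchange is just the proof of that lemma specialized to this setting.

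One slip to fix: your closing remark that ``here small $\delta$ goes late'' is backwards. Rule~2, like Algorithm Delta, places the smallest $\delta$ in position~1 (largest $\lambda$), and that is the orientation your exchange and lemma arguments actually use.
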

{\bf Proof:} First we show that if the problem is feasible, then the solution generated by this rule is feasible. If vehicle $j$ is assigned to position $[j]$, then the energy usage by the vehicle is $D_1\delta_j\lambda_{[j]}$, for $j=1,\ldots,n$. By Lemma~\ref{lemma1}, part (i), we can see that Rule 2 minimizes the maximum energy usage of the vehicles. Since the available energy of each vehicle is the same, this rule generates a feasible solution if the problem is feasible.

Again by Lemma~\ref{lemma1}, part (iii), the solution generated by this rule has the least total energy usage, and thus, it is also optimal if it is feasible. \blot

\subsection{Problem with $m=2$ and Arbitrary $n$} \label{sec2.2}

When there are two road segments, i.e., from the origin location 1 to location 2, and from location 2 to location 3, where location 3 is the final destination, the problem is to minimize $\sum_{j=1}^n \left(\delta_j\sum_{i=1}^{2}D_i\lambda_{j_i}\right)$, subject to the constraints: $S_{j1} - \frac{\delta_j}{C_j}\sum_{i=1}^2D_{i}\lambda_{j_i} \ge \beta_j$, for $j=1, \ldots, n$. 
We consider several different cases as follows:

Case 1: identical vehicles (i.e., $C_j\equiv C$ and $\delta_j\equiv \delta$), identical initial SoCs (i.e., $S_{j1}\equiv S$), and identical threshold on the vehicles final SoCs (i.e., $\beta_j\equiv \beta$). 

Case 2: identical vehicles (i.e., $C_j\equiv C$ and $\delta_j\equiv \delta$), and non-identical initial SoCs (i.e., $S_{j1}\not\equiv S$) or non-identical threshold on the vehicles final SoCs (i.e., $\beta_j\not\equiv \beta$). 

Case 3: non-identical vehicles (i.e., $C_j\not\equiv C$ or $\delta_j\not\equiv \delta$), identical initial SoCs (i.e., $S_{j1} \equiv S$), and identical threshold on the vehicles final SoCs (i.e., $\beta_j\equiv \beta$). 

\subsubsection{Problem $n,2~|~C_j= C, \delta_j= \delta, S_{j1}= S,\beta_j= \beta~|~TU$}

It can be seen that in this problem, any feasible solution has the same total energy usage and hence is optimal. Thus, we just need to find a feasible solution. Since the vehicles are identical, to find a feasible solution, we only need to find a feasible solution to the following perfect matching problem: Given $n$ positions $(1, \ldots, n)$ at location 1, and $n$ positions $(1, \ldots, n)$ at location 2. Construct a graph $G$ consisting of $2n$ vertices representing the $n$ positions at location 1 and the $n$ positions at location 2. For any  position $i$ at location 1 and any position $j$ at location 2, create an edge $(i,j)$ if when a vehicle is assigned to this edge (i.e., the vehicle is assigned to position $i$ at location 1 and position $j$ at location 2), its final SoC meets the threshold constraint, i.e.,  $S - \frac{\delta}{C}(D_1\lambda_{i}+D_2\lambda_{j})  \ge \beta$.  Find a perfect matching in the constructed graph $G$ if it exists. The following result simplifies this search.

\begin{lemma} \label{lemma2}
A perfect matching exists in $G$ if and only if $G$ contains  edge $(i, n + 1 - i)$ for every $i = 1,\ldots,n$.
\end{lemma}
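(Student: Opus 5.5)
The "if" direction is immediate. If every edge $(i,n+1-i)$ is present, then the $n$ edges $\{(i,n+1-i) : i=1,\ldots,n\}$ are pairwise disjoint: each position $i$ at location 1 is used exactly once, and the map $i\mapsto n+1-i$ is a bijection on positions at location 2. So these edges already form a perfect matching of $G$.

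For the "only if" direction, first observe that $G$ has a monotone structure. Edge $(i,j)$ is present exactly when $D_1\lambda_i + D_2\lambda_j \le A$, where $A = C(S-\beta)/\delta$. Because $\lambda_1\ge\cdots\ge\lambda_n$, the left-hand side is non-increasing in both $i$ and $j$. Hence whenever $(i,j)$ is an edge, so is every $(i',j')$ with $i'\ge i$ and $j'\ge j$.

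Now let $M$ be a perfect matching, viewed as a permutation $\sigma$ with edges $(i,\sigma(i))$. I would apply Lemma~\ref{lemma1}, part (ii), with $x_i = D_1\lambda_i$, which is non-increasing in $i$, and with $y$ equal to $D_2\lambda$ listed in reverse order, $y_k = D_2\lambda_{n+1-k}$, which is non-decreasing in $k$. Matching $x_i$ with $y_i$ means pairing position $i$ with position $n+1-i$. Lemma~\ref{lemma1}(ii) then gives
\[
\max_i \{D_1\lambda_i + D_2\lambda_{n+1-i}\} \le \max_i \{D_1\lambda_i + D_2\lambda_{\sigma(i)}\} \le A ,
\]
where the last inequality holds because every edge of $M$ satisfies the SoC threshold. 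Therefore every pair $(i,n+1-i)$ meets the threshold, so all these edges are in $G$.

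The lemma does need positive numbers, but $D_i>0$ and $\lambda_k = 1-\eta_k > 0$, so this holds. The main point needing care is getting the reindexing right, so that the anti-diagonal pairing is exactly the sorted pairing in Lemma~\ref{lemma1}.

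As a fallback, I could argue directly by counting. Suppose $(i,n+1-i)$ is not an edge. By monotonicity, no $(i',j')$ with $i'\le i$ and $j'\le n+1-i$ is an edge. Then the $i$ rows $1,\ldots,i$ have all their neighbours among the $i-1$ columns $n+2-i,\ldots,n$. By Hall's condition, no perfect matching can exist, which again proves the "only if" direction.
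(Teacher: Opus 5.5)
Your proof is correct, including the fallback, and your main route differs from the paper's mainly in packaging.

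The paper proves the ``only if'' direction by uncrossing edges directly inside the matching. If $M$ contains $(a,b)$ and $(c,d)$ with $a<c$ and $b<d$, then monotonicity of $\lambda$ shows that $(a,d)$ and $(c,b)$ are also edges of $G$, so the two edges can be swapped. Repeating this until no such pair remains forces $M$ to be the anti-diagonal $\{(i,n+1-i)\}$.

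Your main route uses the same exchange idea, but hidden inside Lemma~\ref{lemma1}(ii). You feed it the permutation $[i]=n+1-\sigma(i)$; state this explicitly, since it is exactly what gives $y_{[i]}=D_2\lambda_{\sigma(i)}$. This shows the anti-diagonal minimizes the bottleneck value $\max_i\{D_1\lambda_i+D_2\lambda_{\sigma(i)}\}$ over all pairings, and after that you only need that $G$ is a threshold graph. What this buys:
\begin{itemize}
\item It makes Lemma~\ref{lemma2} visibly the threshold version of the bottleneck fact the paper later invokes for Proposition~\ref{prop:3.2-Rule1}.
\item It inherits the clean termination of Lemma~\ref{lemma1}'s proof, where the agreeing prefix grows at each step. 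The paper's ``repeat until'' tacitly relies on each swap increasing the number of inversions.
\end{itemize}

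Your fallback is genuinely different, and arguably the most economical. Because $G$ is up-closed (a staircase), a missing edge $(i,n+1-i)$ confines rows $1,\ldots,i$ to the $i-1$ columns $n+2-i,\ldots,n$. Pigeonhole, which is the trivial direction of Hall, then rules out a perfect matching. This needs no iteration and no rearrangement inequality at all.
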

{\bf Proof:} $(\Rightarrow)$. If $G$ contains edge $(i, n + 1 - i)$ for every $i = 1,\ldots,n$, then these edges together form a perfect matching. 

\noindent $(\Leftarrow)$. If a perfect matching exists, we denote it as $M$, noting that it must contain exactly $n$ edges.  We first show that if $M$ contains two edges $(a, b)$ and $(c, d)$ with $a<c$ and $b<d$, then $G$ must contain edges $(a, d)$ and $(c, b)$. The existence of edges $(a, b)$ and $(c, d)$ implies that 
\vspace{-0.15in}
\begin{eqnarray}
    \frac{\delta}{C}(D_1\lambda_{a}+D_2\lambda_{b}) & \leq & S-\beta. \label{prop1-eq1} \\
    \frac{\delta}{C}(D_1\lambda_{c}+D_2\lambda_{d}) & \leq &  S-\beta. \label{prop1-eq2}
\end{eqnarray}
Since $\lambda_1 \ge \cdots \ge \lambda_n$, the fact that $a<c$ and $b<d$ implies that $\lambda_a\ge \lambda_c$ and $\lambda_b\ge \lambda_d$. Thus, (\ref{prop1-eq1}) and (\ref{prop1-eq2}) imply that
\vspace{-0.1in}
\begin{eqnarray}
    \frac{\delta}{C}(D_1\lambda_{a}+D_2\lambda_{d}) & \leq & S-\beta. \label{prop1-eq3} \\
    \frac{\delta}{C}(D_1\lambda_{c}+D_2\lambda_{b}) & \leq & S-\beta. \label{prop1-eq4}
\end{eqnarray}
Inequalities (\ref{prop1-eq3}) and (\ref{prop1-eq4}) imply that both edges $(a, d)$ and $(c, b)$ are contained in $G$. We revise $M$ by replacing edges $(a, b)$ and $(c, d)$ by $(a, d)$ and $(c, b)$. Clearly, the revised $M$ is still a perfect matching. We repeat the above procedure until there are no two edges in $M$, $(u_1, v_1)$ and $(u_2, v_2)$ such that $u_1<u_2$ and $v_1<v_2$. Hence, the $n$ edges in $M$ can be written as $(1, j_1), (2, j_2), \ldots, (n, j_n)$, where $j_1>j_2>\cdots > j_n$. Since $(j_1, \ldots, j_n)$ is a permutation of $(1, \ldots, n)$, we have: $j_i=n-i+1$ for $i=1, \ldots, n$. This implies that $M$ consists of edges $(i, n + 1 - i)$ for $i = 1,\ldots,n$. Thus, these edges must exist in $G$. \blot

\medskip

From Lemma~\ref{lemma2}, to solve problem $n, 2~|~C_j= C, \delta_j= \delta, S_{j1}= S,\beta_j= \beta~|~TU$, we only need to check, for each $i=1, \ldots, n$,  if it is feasible to assign a vehicle to position $i$ in the first segment and position $n+1-i$ in the second segment, i.e., if $\frac{\delta}{C}(D_1\lambda_{i}+D_2\lambda_{n+1-i})  \leq  S-\beta$. If this constraint is satisfied for  $i=1, \ldots, n$, then this vehicle-position assignment gives a feasible and also optimal solution. Otherwise, the problem is infeasible. Thus, we have the following simple rule for this problem.

\medskip

\noindent {\bf Rule 3:} Assign vehicle $j$ to position $j$ in segment 1 and to position $n-j+1$ in segment 2, for $j=1, \ldots, n$. 

\begin{proposition} \label{prop3}
    Rule 3 solves problem $n,2~|~C_j=C, \delta_j=\delta,S_{j1}=S, \beta_j=\beta~|~TU$ in $O(n)$ time either by finding an optimal solution or verifying that the problem is infeasible. 
\end{proposition}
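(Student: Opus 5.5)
The argument has three parts: every feasible solution is optimal, Lemma~\ref{lemma2} turns feasibility into a check along the anti-diagonal, and that check takes linear time.

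\textbf{Every feasible solution is optimal.} With $\delta_j\equiv\delta$, the total usage of any solution is
\[
\delta\sum_{j=1}^n\bigl(D_1\lambda_{j_1}+D_2\lambda_{j_2}\bigr)=\delta\,(D_1+D_2)\sum_{k=1}^n\lambda_k .
\]
This holds because in each segment the positions $j_1$ (respectively $j_2$) range over a permutation of $\{1,\ldots,n\}$. The total is therefore the same for every solution, so it suffices to find a feasible solution or prove that none exists.

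\textbf{Feasibility reduces to a perfect matching.} The vehicles are identical in $C$, $\delta$, $S$ and $\beta$, so a solution is determined up to relabelling by the pairs $(j_1,j_2)$ of positions used in the two segments. These pairs form a perfect matching between the positions at location~1 and the positions at location~2. A vehicle assigned to pair $(i,j)$ satisfies the SoC constraint exactly when $(i,j)$ is an edge of the graph $G$ defined above. Hence the problem is feasible if and only if $G$ has a perfect matching.

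\textbf{Invoking Lemma~\ref{lemma2}.} By Lemma~\ref{lemma2}, $G$ has a perfect matching if and only if $G$ contains every edge $(i,n+1-i)$. If all these edges are present, Rule~3 realizes exactly this matching: vehicle $j$ takes position $j$ in segment~1 and position $n+1-j$ in segment~2. Rule~3 is then feasible, and hence optimal by the first step. If some edge $(i,n+1-i)$ is missing, Rule~3 is infeasible, and by Lemma~\ref{lemma2} no perfect matching exists. The problem is then infeasible, and the rule correctly reports this.

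\textbf{Running time.} Rule~3 checks the $n$ inequalities $\frac{\delta}{C}(D_1\lambda_i+D_2\lambda_{n+1-i})\le S-\beta$, each in constant time, which gives $O(n)$. No sorting is needed because the $\lambda_k$ are already indexed by position.

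None of these steps is a real obstacle, since the combinatorial content is carried by Lemma~\ref{lemma2}. The point needing care is the identification of feasible solutions with perfect matchings in $G$, which relies on the vehicles being fully identical.
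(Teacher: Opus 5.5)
Your proposal is correct and takes the same route as the paper. The paper's proof simply cites Lemma~\ref{lemma2} and the preceding discussion: the total usage is the same for every feasible solution, feasibility is recast as a perfect matching in $G$, and this reduces to checking the $n$ anti-diagonal inequalities; you make each of these steps, including the $O(n)$ count, explicit.
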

{\bf Proof:} This follows immediately from Lemma~\ref{lemma2} and the discussion above. \blot

\subsubsection{Problems $n,2~|~C_j= C, \delta_j= \delta, S_{j1}=S_1~|~TU$ and $n,2~|~C_j= C, \delta_j= \delta, \beta_j=\beta~|~TU$} \label{sec2.2.2}

Since the vehicles are identical in these problems, any feasible solution to these problems has the same objective value, $\sum_{j=1}^n \left(\delta\sum_{i=1}^{2}D_i\lambda_{j_i}\right) = \delta\sum_{j=1}^n(D_1\lambda_{j_1} +D_2\lambda_{j_2})$, where $(1_1, 2_1, \ldots, n_1)$ and $(1_2, 2_2, \ldots, n_2)$ are two permutations of $(1, 2, \ldots, n).$ 
Thus, we just need to find a feasible solution or verify that the problem is infeasible, with respect to the constraints: $S_{j1} - \frac{\delta}{C}\sum_{i=1}^2D_{i}\lambda_{j_i} \ge \beta_j$, for $j=1, \ldots, n$. However, doing so is not easy.  

\begin{proposition} \label{prop2.2.2}
Finding a feasible solution to the problems $n,2~|~C_j= C, \delta_j= \delta~|~TU$ is strongly NP-hard, even when $S_{j1} \equiv S_1$ or  $\beta_j \equiv \beta$. 
\end{proposition}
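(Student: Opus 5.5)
We reduce from Numerical 3-Dimensional Matching (N3DM), which is strongly NP-hard. An instance has positive integers $a_1,\ldots,a_q$, $b_1,\ldots,b_q$, $c_1,\ldots,c_q$ with $\sum_j (a_j+b_j+c_j)=qB$. It asks whether there exist permutations $\sigma,\tau$ such that $a_j+b_{\sigma(j)}+c_{\tau(j)}=B$ for all $j$.

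With identical vehicles, the constraint for vehicle $j$ reads
\[
D_1\lambda_{j_1}+D_2\lambda_{j_2}\le A_j/\delta ,
\]
where $A_j=C(S_{j1}-\beta_j)$. A feasible solution is therefore an assignment of each vehicle to a pair (position in segment 1, position in segment 2), with both position maps bijective.

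\textbf{Construction.} Take $n=q$ vehicles, $D_1=D_2=1$ (or any common value), and $\delta=C=1$. Set
\[
\lambda_k = \Lambda - \epsilon b_k \ \text{ in segment-1 positions}, \qquad \lambda_k = \Lambda - \epsilon c_k \ \text{ in segment-2 positions}.
\]
Since both segments use the same $\lambda$ vector, the roles must be arranged carefully. I would encode $b$ through segment 1 with weight $D_1$ and $c$ through segment 2 with weight $D_2$, choosing $D_1=1$ and $D_2=M$ for a large integer $M$. Define
\[
\lambda_k = 1-\epsilon\,(b'_k) ,
\]
where $b'$ and $c'$ are packed into a single sorted sequence. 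The simplest fix is to let the platoon have $2q$ positions-worth of structure.

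Concretely, take $n=q$ and let both $b$ and $c$ be indexed so that they are sorted the same way after a standard transformation. A cleaner alternative is to first reduce N3DM to the special case $b_k=c_k$ (Numerical Matching with target sums), which remains strongly NP-hard in the form: given $x_1,\ldots,x_q$ and targets $t_1,\ldots,t_q$, find a permutation pair with $x_{\sigma(j)}+x_{\tau(j)}= t_j$. This is the version I would actually use, citing a standard reference (e.g., the Garey--Johnson style reductions, or Hall's use of it in scheduling).

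Then set $\eta_k$ so that $\lambda_k = L - x_k$ with $x$ sorted so that the $\lambda$ are non-increasing, take $D_1=D_2=1$, $\delta=C=1$, $S_{j1}=S$, and $\beta_j = S - (2L - t_j)$. For the $S_{j1}\equiv S_1$ case, instead fix $\beta$ and set $S_{j1}=\beta+2L-t_j$. Choosing $L$ large keeps $0<\lambda_k\le 1$ and all SoC values in $[0,1]$ after scaling $C$. All numbers remain polynomially bounded, which preserves strong NP-hardness.

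\textbf{Equivalence.} Vehicle $j$ is feasible iff $2L - x_{j_1}-x_{j_2}\le 2L-t_j$, that is, $x_{j_1}+x_{j_2}\ge t_j$. Summing over all vehicles gives $2\sum_k x_k \ge \sum_j t_j$. If the instance is normalized so that $\sum_j t_j = 2\sum_k x_k$, every inequality must be tight. Hence feasibility holds iff the matching instance is a yes-instance.

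\textbf{Main obstacle.} The delicate step is ensuring that the intermediate problem, with a single shared vector $x$ for both coordinates and arbitrary targets $t_j$, is strongly NP-hard. This is needed because both segments draw from the same $\lambda$ vector. I would handle it directly from N3DM by choosing
\[
x=\bigl(a\text{-values offset by } 0,\ b\text{-values offset by a large } K\bigr), \qquad \text{i.e. } n=2q \text{ positions},
\]
with $2q$ vehicles. There are $q$ "real" vehicles with $t_j = B - a_j + \ldots$, and $q$ dummy vehicles whose targets force them to take the complementary positions. Large offsets like $K$ and $3K$ restrict which position classes each vehicle type can pair in the two segments. Verifying that these offsets rule out all unintended pairings, while keeping the tight-sum argument, is the part requiring the most care.
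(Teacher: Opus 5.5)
Your last-stage reduction is sound and uses the same mechanism as the paper. With $\lambda_k$ a decreasing affine function of a non-decreasing shared sequence $x_k$ and per-vehicle budgets $A_j/\delta$, vehicle $j$ is feasible iff $x_{j_1}+x_{j_2}\ge t_j$. If $\sum_j t_j=2\sum_k x_k$, summing forces every inequality to be tight. (A small repair: with $D_1=D_2=1$ and $\delta=C=1$, a large $L$ makes $\lambda_k=L-x_k$ exceed $1$; use $D_1=D_2=D\ge L$ and $\lambda_k=(L-x_k)/D$.)

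\textbf{The gap is the source problem.} Everything rests on strong NP-hardness of the target-sum problem in which \emph{one} sequence supplies both summands, and you do not establish it. The reference you gesture at does not give it: the Garey--Johnson problem Numerical Matching with Target Sums has two independent size multisets, which is exactly the flexibility a single $\lambda$ vector cannot realize.

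Your fallback gadget from general N3DM ($2q$ positions, dummy vehicles, offsets $K$, $3K$) is unverified, and it is not routine. Tightness forces every dummy's target to be met with equality too. Also, since $D_1=D_2$ and both segments draw from the same multiset, a vehicle can take its two position classes in either order. So the same position value can be used by one real vehicle in segment~1 and another in segment~2, and the real vehicles then need not induce the two permutations N3DM requires. Nothing in your sketch rules this out.

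\textbf{The missing idea is that no gadget is needed.} Use the restricted problem RN3DM of Yu et al.\ (2004): the first set $\{x_1,\ldots,x_u\}$ is arbitrary, the other two are both $\{1,\ldots,u\}$, and $ub=u(u+1)+\sum_j x_j$. It is strongly NP-complete. It is precisely your intermediate problem with shared sequence $(1,2,\ldots,u)$ and targets $t_j=b-x_j$ (here $x_j$ are the RN3DM elements, not your shared sequence). Your normalization then holds automatically, since $\sum_j t_j=u(u+1)=2\sum_{k=1}^u k$.

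The paper reduces directly from RN3DM with $n=u$, $D_1=D_2=D$, $\lambda_k=(n+1-k)/(vD)$ where $v=\delta/C$, and $S_{j1}-\beta_j=b-x_j$, holding either $S_{j1}$ or $\beta_j$ constant. Vehicle $j$ is then feasible iff $x_j+(n+1-j_1)+(n+1-j_2)\le b$. The left-hand sides sum to $nb$, so all hold with equality, and $k\mapsto n+1-k$ applied to each segment's positions yields the two permutations. With that source problem substituted, your argument is complete.
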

{\bf Proof:} We prove that finding a feasible solution to this problem is strongly NP-hard even when $S_{j1}\equiv S_1$ or $\beta_j\equiv \beta$,
by a reduction from the following restricted variant of Numerical 3-D Matching problem (RN3DM), which is shown to be strongly NP-complete by Yu et al. (2004). 

\noindent RN3DM: Given three sets, each containing $u$ integers,  $X=\{x_1, \ldots, x_u\}$, where $x_j$ is a positive integer, for $j=1, \ldots, u$, $Y=Z=\{1, 2, \ldots, u\}$,   and a positive integer $b$, where $ub = u(1+u) + \sum_{j=1}^u x_j$, do there exist two permutations of $(1, 2, \ldots, u)$, denoted as $([1], \ldots, [u])$ and $(<1>, \ldots, <u>)$, such that $x_j + y_{[j]}+ z_{<j>}=x_j + [j] + <j> = b$ for $j=1, \ldots, u$?

Given an instance of RN3DM, we construct an instance of our problem 
as follows: $n=u$; $S_{j1}-\beta=b-x_j$ if $\beta_j\equiv \beta$,  or $S - \beta_j=b-x_j$ if $S_{j1}\equiv S$; $D_1=D_2=D$ for some large positive integer $D$ such that $vD$ is a positive integer; $\lambda_j=(n+1-j)/(vD)$, for $j=1, \ldots, n$, where $v=\frac{\delta}{C}.$

For conciseness, we do not consider the two cases $S_{j1}\not\equiv S$ or $\beta_j\not\equiv \beta$ separately. Instead, we present a single proof that works for both cases. To this end, we use the generic symbol $S_{j1}$ to represent $S$ in case $S_{j1}\equiv S$, and use $\beta_j$ to represent $\beta$ in case $\beta_j\equiv \beta$. It can be seen that in this instance, $S_{j1}= b - x_j + \beta_j$ and $v\sum_{i=1}^2D_{i}\lambda_{j_i} = (n+1-j_1) + (n+1-j_2)$, for $j=1, \ldots, n.$

\noindent $(\Rightarrow)$. Given a solution to the instance of RN3DM, i.e., two permutations $([1], \ldots, [u])$ and $(<1>, \ldots, <u>)$ such that $x_j + [j]+ <j>=b$ for $j=1, \ldots, u$, we assign vehicle $j$ to position $n+1-[j]$ in segment 1 (i.e., $j_1=n+1-[j]$) and position $n+1-<j>$ in segment 2 (i.e., $j_2=n+1-<j>$). This implies that the final SoC of vehicle $j$ is: 
\vspace{-0.1in}
\begin{eqnarray*}
    S_{j1} - v\sum_{i=1}^2D_{i}\lambda_{j_i} & = & b -  x_j  +\beta_j - (vD)(\lambda_{n+1-[j]}+ \lambda_{n+1-<j>}) \\
    & = & b - x_j + \beta_j - ([j] + <j>) = \beta_j, ~~\mbox{for $j=1, \ldots, n$}
\end{eqnarray*}
Thus, all the constraints are satisfied. 

\noindent $(\Leftarrow)$. Given a feasible solution to the instance of our problem, i.e.,  vehicle $j=1, \ldots, n$ is assigned to some position $[j]$ in segment 1 and some position  $<j>$ in segment 2, such that  $S_{j1} - vD(\lambda_{[j]} + \lambda_{<j>}) \ge \beta_j$, which implies that  $b-x_j + \beta_j - ((n+1-[j]) + (n+1-<j>)) \ge \beta_j$, and hence 
\vspace{-0.1in}
\begin{equation}
x_j + (n+1- [j]) + (n+1-<j>) \le b, ~~\mbox{for $j=1, \ldots, n$}. \label{only-if-eq1}
\end{equation}
Moreover, since the total of the left-hand-sides of these inequalities is
\vspace{-0.1in}
\[
\sum_{j=1}^n (x_j + (n+1-[j]) + (n+1-<j>)) =\sum_{j=1}^n x_j + \sum_{j=1}^n j + \sum_{j=1}^n j =  nb,
\]
in order for all these inequalities in (\ref{only-if-eq1}) to hold, they must all be satisfied at equality. Thus, $x_j + (n+1-[j]) + (n+1-<j>) = b$, for $j=1, \ldots, n$. Since $(n+1-[1], \ldots, n+1-[n])$ and $(n+1-<1>, \ldots, n+1-<n>)$ are both permutations of $(1, \ldots, n)$, the $n$ equations:  $x_j+(n+1-[j])+(n+1-<j>) = b$, for $j=1, \ldots, n$, form a solution to the instance of RN3DM.  \blot

\subsubsection{Problems $n,2~|~C_j=C,S_{j1}=S_1, \beta_j=\beta~|~TU$ and $n,2~|~\delta_j=\delta,S_{j1}=S_1, \beta_j=\beta~|~TU$}

In these problems, since the $\delta_j$ or $C_j$ values can be different, the vehicles are not identical. Thus, unlike in problem $n,2~|~C_j=C,\delta_j=\delta~|~TU$, different feasible solutions for these problems may have different objective values. Hence, finding a feasible solution is not sufficient. However, doing so is already difficult, as we now demonstrate.





\begin{proposition} \label{prop2.2.3}
Finding a feasible solution to problem $n,2~|~S_{j1}=S_1, \beta_j=\beta~|~TU$ is strongly NP-hard, even when $C_j = C$ or  $\delta_j = \delta$.
\end{proposition}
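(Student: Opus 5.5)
The plan is to adapt the reduction from RN3DM used in Proposition~\ref{prop2.2.2}. There, the heterogeneity that encoded the integers $x_j$ sat in the right-hand side $S_{j1}-\beta_j$. Here $S_{j1}\equiv S_1$ and $\beta_j\equiv\beta$, so I will move the heterogeneity into the coefficient $v_j=\delta_j/C_j$ on the left-hand side. The feasibility constraint for vehicle $j$ is
\[
v_j\,(D_1\lambda_{j_1}+D_2\lambda_{j_2})\le S_1-\beta .
\]
If $v_j$ is made inversely proportional to $b-x_j$, this constraint becomes exactly the RN3DM-type inequality $(n+1-j_1)+(n+1-j_2)\le b-x_j$ obtained in the proof of Proposition~\ref{prop2.2.2}.

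\textbf{Construction.} Given an instance of RN3DM, I may assume $b-x_j\ge 2$ for every $j$; otherwise $x_j+[j]+<j>=b$ is impossible and we can output a trivially infeasible platooning instance. I then set:
\begin{itemize}[itemsep=1pt]
\item $n=u$ and $D_1=D_2=1$;
\item $\lambda_k=(n+1-k)/w$ with $w=n+1$, so that $0<\lambda_n\le\cdots\le\lambda_1<1$, as required by $0\le\eta_k<1$ and monotonicity;
\item any fixed $S_1>\beta\ge 0$, for example $S_1-\beta=1/2$;
\item $v_j=w(S_1-\beta)/(b-x_j)$ for each $j$.
\end{itemize}
In the case $C_j=C$, I realize $v_j$ by taking $\delta_j=Cv_j$. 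In the case $\delta_j=\delta$, I take $C_j=\delta/v_j$. With these values, vehicle $j$ placed at positions $(j_1,j_2)$ is feasible if and only if
\[
x_j+(n+1-j_1)+(n+1-j_2)\le b .
\]
All numbers involved are rationals whose numerators and denominators are polynomially bounded in $u$ and $b$. This keeps the reduction pseudo-polynomial, which is what strong NP-hardness requires; alternatively, one can scale by a common factor such as $C$ to keep values integral where convenient.

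\textbf{Equivalence.} The two directions then follow verbatim from Proposition~\ref{prop2.2.2}.
\begin{itemize}[itemsep=1pt]
\item $(\Rightarrow)$ Given an RN3DM solution, assign vehicle $j$ to position $n+1-[j]$ in segment~1 and position $n+1-<j>$ in segment~2. Each constraint then holds with equality.
\item $(\Leftarrow)$ Sum the $n$ feasibility inequalities. The left-hand sides total $\sum_j x_j+2\sum_{k=1}^n k=nb$, using the RN3DM condition $ub=u(u+1)+\sum_j x_j$. Hence every inequality must be tight, and the two position permutations yield an RN3DM solution.
\end{itemize}

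\textbf{Main obstacle.} The only step needing care is confirming that the parameter choices are legitimate. The $\lambda_k$ must lie in $(0,1]$ and be non-increasing, and the $C_j$ or $\delta_j$ must be positive. Placing $b-x_j$ in a denominator must not break the "strong" part of the claim. I would handle the latter by noting that all encoded magnitudes are bounded by a polynomial in $u$ and $\max_j x_j\le b$, exactly as in Proposition~\ref{prop2.2.2}. The rest of the argument reuses Proposition~\ref{prop2.2.2}, so I expect it to go through without difficulty.
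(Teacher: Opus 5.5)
Your proposal is correct and follows essentially the same reduction from RN3DM as the paper: $\delta_j/C_j$ inversely proportional to $b-x_j$, $\lambda_k$ proportional to $n+1-k$, and the summation argument that forces every feasibility inequality to be tight. It is slightly more careful than the paper's proof, since it handles the case $\delta_j=\delta$ explicitly via $C_j=\delta/v_j$ and guards against $b-x_j\le 1$; however, drop the aside about clearing denominators by a common factor (the lcm of the $b-x_j$ need not be polynomially bounded) and rely instead on the rationals having polynomially bounded numerators and denominators, as the paper implicitly does.
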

{\bf Proof:} 
We show that the feasibility version of problem $n,2~|~C_j=C, S_{j1}=S_1, \beta_j=\beta~|~TU$ is strongly NP-hard by a reduction from the restricted variant of Numerical 3-D Matching problem (RN3DM), described in Section~\ref{sec2.2.2}. A similar proof can be given for the feasibility version of problem $n,2~|~\delta_j=\delta, S_{j1}=S_1, \beta_j=\beta~|~TU$, and hence is omitted. Given an instance of RN3DM, we construct an instance of the feasibility version of problem $n,2~|~C_j=C, S_{j1}=S_1, \beta_j=\beta~|~TU$ as follows:   $n=u$, three integers $C$, $S_1$ and $\beta$ with $S_1>\beta$, $C_j= C$, $S_{j1}=S_1$ and $\beta_j=\beta$, for $j=1\ldots, n$; $D_1=D_2=D$ for some large positive integer $D$ that is much larger than $C$, and  $\lambda_j=(n+1-j)C/D$,  and $\delta_j = \frac{S_1-\beta}{b-x_j}$,  for $j=1, \ldots, n$. 

It can be seen that in this instance, $D_1\delta_j\lambda_{j_1} + D_2\delta_j\lambda_{j_2} = \frac{S_1-\beta}{b-x_j}(j_1 + j_2)C$, for $j=1, \ldots, n.$ We show that there exists a solution to RN3DM if and only if there exists a solution to the constructed instance of problem $n,2~|~C_j=C, S_{j1}=S_1, \beta_j=\beta~|~TU$.

\noindent $(\Rightarrow)$ Given a solution to the instance of RN3DM, i.e., two permutations $([1], \ldots, [u])$ and $(<1>, \ldots, <u>)$ such that $x_j + [j]+ <j>=b$ for $j=1, \ldots, u$, we assign vehicle $j$ to position $n+1-[j]$ in phase 1 (i.e., $j_1=n+1-[j]$) and position $<j>$ in phase 2 (i.e., $j_2=n+1-<j>$). This implies that the energy usage by vehicle $j$ is: 
\vspace{-0.1in}
\begin{eqnarray*}
    D_1\delta_j\lambda_{j_1} + D_2\delta_j\lambda_{j_2} & = & \frac{S_1-\beta}{b-x_j}([j]+<j>)C = (S_1-\beta)C, ~~\mbox{for $j=1, \ldots, n$}
\end{eqnarray*}
Thus, the final SoC of each vehicle is $S_1 - [(S_1-\beta)C]/C = \beta$, and hence all the constraints are satisfied.

\noindent $(\Leftarrow)$ Given a solution to the instance of our problem, i.e.,  vehicle $j=1, \ldots, n$ is assigned to some position $[j]$ in segment 1 and some position  $<j>$ in segment 2, such that all the constraints are satisfied, i.e., 
\vspace{-0.1in}
\begin{eqnarray}
 D\delta_j\lambda_{[j]}/C  + D\delta_j\lambda_{<j>}/C & \leq & S_1 - \beta, ~~\mbox{for $j=1, \ldots, n$}.
\end{eqnarray}
By the definition of $\delta_j$ and $\lambda_j$, the above inequalities imply that 
\[
\frac{S_1-\beta}{b-x_j}((n+1-[j])+(n+1-<j>)) \le S_1 - \beta, ~~\mbox{for $j=1, \ldots, n$}.
\]
Thus, $x_j+(n+1-[j])+(n+1-<j>) \le b$, for $j=1, \ldots, n$. Given that the sum of the left-hand sides of all these inequalities is $nb$, these inequalities must all be satisfied at equality. Thus,  $x_j+(n+1-[j])+(n+1-<j>) = b$, for $j=1, \ldots, n$. Since $(n+1-[1], \ldots, n+1-[n])$ and $(n+1-<1>, \ldots, n+1-<n>)$ are both permutations of $(1, \ldots, n)$, the $n$ equations:  $x_j+(n+1-[j])+(n+1-<j>) = b$, for $j=1, \ldots, n$, form a solution to the instance of RN3DM. \blot

\subsection{Problem with Fixed $m\ge 3$ and Arbitrary $n$} \label{sec4-openprob}
From the results shown above, even the feasibility version of the problem with an arbitrary $n$ and a fixed $m\ge 3$  where at least one set of the parameters are non-identical, i.e., problem $n,\bar{m}\ge 3\mid\mid TU$, is  strongly NP-hard. 
However, the complexity of both the feasibility version and the optimization version of this problem where every set of parameters are identical, i.e., problem $n,\bar{m}\ge 3\mid C_j=C, \delta_j=\delta, S_{j1}=S_1, \beta_j=\beta \mid TU$,  remains open. We observe that finding a feasible solution to this problem is as hard as a special case of the numerical 3D matching (N3DM) problem  where the three sets involved are identical (i.e., $X=Y=Z$), which we denote by N3DM-IDEN. While N3DM is known to be strongly NP-hard (Garey and Johnson 1979), to the best of our knowledge, N3DM-IDEN is an open problem. We conjecture that N3DM-IDEN is strongly {\em NP}-hard, which would imply that the feasibility version of our problem $n,\bar{m}\ge 3\mid C_j=C, \delta_j=\delta, S_{j1}=S_1, \beta_j=\beta \mid TU$ is also strongly {\em NP}-hard. 


\subsection{Problem with Arbitrary $m$ and Fixed $n$} \label{sec4.3}
We first show that the problem with an arbitrary $n$ and a fixed $m$ is ordinarily {\em NP}-hard by proving this result for a special case with two identical vehicles ($n = 2$, $C_1=C_2\equiv C$, $\delta_1=\delta_2\equiv \delta$), equal initial states of charge (i.e. $S_{11}=S_{21}$), and a zero  charge level required for both vehicles at the end (i.e., $\beta_1=\beta_2=0$).

\begin{proposition}  \label{prop:2.3bnpc}
Finding a feasible solution to problem $2, m~|~C_j=C, \delta_j=\delta, S_{j1}=S_1, \beta_j=\beta~|~TU$ is ordinarily NP-hard.
\end{proposition}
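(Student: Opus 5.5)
We reduce from Partition. Given positive integers $a_1,\ldots,a_m$ with $\sum_{i=1}^m a_i = 2B$, the question is whether there is a subset $I\subseteq\{1,\ldots,m\}$ with $\sum_{i\in I} a_i = B$. We build an instance with $n=2$ identical vehicles, $C_1=C_2=C$, $\delta_1=\delta_2=\delta$, $S_{11}=S_{21}=S_1$ and $\beta_1=\beta_2=0$. There are $m$ road segments with $D_i=a_i$. The positional rates are $\lambda_1=1$ and $\lambda_2=\lambda$ for some fixed $\lambda\in[0,1)$, for example $\lambda=0$, or $\lambda=1/2$ if we want strictly positive usage. Finally we set
\[
S_1 \;=\; \frac{\delta}{C}\,(1+\lambda)B ,
\]
possibly after scaling $C$ so that all data are integers. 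Every quantity is written in binary and the instance has size polynomial in that of the Partition instance. This gives ordinary, not strong, NP-hardness, which matches the claim, because $n$ is fixed and Partition is only weakly NP-hard.

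The key observation is that on each segment exactly one vehicle is in position 1 and the other is in position 2. Let $I$ be the set of segments on which vehicle 1 leads. Then the usages are
\[
U_{1,m+1}=\delta\Big(\sum_{i\in I}a_i+\lambda\sum_{i\notin I}a_i\Big), \qquad
U_{2,m+1}=\delta\Big(\sum_{i\notin I}a_i+\lambda\sum_{i\in I}a_i\Big).
\]
Their sum is always $\delta(1+\lambda)2B$, which is consistent with the remark that every feasible solution has the same total usage. Feasibility requires $U_{j,m+1}\le C S_1=\delta(1+\lambda)B$ for $j=1,2$. Since the two usages sum to exactly $2\delta(1+\lambda)B$, both constraints hold if and only if both usages equal $\delta(1+\lambda)B$. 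Writing $x=\sum_{i\in I}a_i$, the condition $x+\lambda(2B-x)=(1+\lambda)B$ becomes $(1-\lambda)(x-B)=0$. Because $\lambda<1$, this holds exactly when $x=B$.

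Both directions of the equivalence follow from this. A partition $I$ yields the schedule in which vehicle 1 leads on the segments in $I$ and vehicle 2 leads on the others. Conversely, any feasible schedule defines such an $I$ with $\sum_{i\in I}a_i=B$.

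The only real obstacle is the technical requirement $\beta_j<S_{j1}$ together with integrality. The first holds because $S_1>0$. For integrality we choose $C$ and $\delta$ so that $S_1$ is a valid SoC in $(0,1]$. For example, take $\delta=1$, $\lambda=1/2$ and $C=3B$, which gives $S_1=1/2$. All of this is routine.
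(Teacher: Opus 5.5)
Your reduction is correct and is essentially the paper's. Both reduce from Partition with two identical vehicles and observe that the total usage on each segment does not depend on the order. Each vehicle's budget (half of that fixed total) must therefore be met with equality, and the segments led by vehicle 1 give the partition. The paper takes $\eta_1=1/2$, $\eta_2=3/4$, $D_i=4Cx_i$, so the leader uses $2x_i$ and the follower $x_i$; this is your construction with $\lambda=1/2$, up to scaling. One small caveat: your option $\lambda=0$ corresponds to $\eta_2=1$, which the model excludes ($\eta_k<1$), so use some $\lambda\in(0,1)$, such as your $\lambda=1/2$.
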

{\bf Proof:} By reduction from the Partition problem, which is known to be binary {\em NP}-complete (Karp 1972) and defined as follows.

\noindent {\em Partition:} Given $u$ positive integers, $x_1, \ldots, x_{u}$ such that $\sum_{j=1}^{u} x_j = 2X$ for some integer $X$, does there exist a Partition $A_1$ and $A_2$ of the set $A = \{1,\ldots, u\}$ such that $\sum_{j\in A_1} x_j = \sum_{j\in A_2} x_j = X$? 

Given an arbitrary instance of Partition, we construct an instance of problem $\bar n, m~|~|TU$ as follows: $n = 2$, $m=u$; $C_1=C_2\equiv C$ for some positive integer $C>3X$, $\delta_1=\delta_2\equiv \delta=\frac{1}{C}$, $\eta_1=1/2$, $\eta_2=3/4$; $D_{i} = 4Cx_i$, for $i=1, \ldots, m$; and 
$S_{11} = S_{21} = \frac{3X}{C}, \beta_1 = \beta_2=0$. We show that this instance of the problem has a feasible solution if and only if there exists a Partition.

\noindent ($\Rightarrow$) If there exists a Partition $A_1, A_2$, then in phase $i$ for $i\in A_1$, we assign vehicle 1 to position 1 and vehicle 2 to position 2, and in phase $i$ for $i\in A_2$, we assign vehicle 1 to position 2 and vehicle 2 to position 1. Thus, the total energy usages of the two vehicles are, 
\[
\sum_{i\in A_1}^m D_i\delta(1-\eta_1) + \sum_{i\in A_2}^m D_i\delta(1-\eta_2) = \sum_{i\in A_1} 4Cx_i(\delta/2) + \sum_{i\in A_2} 4Cx_i(\delta/4)
= 2 \sum_{i\in A_1} x_i + \sum_{i\in A_2} x_i = 3X, \quad \mbox{and}
\]
\[
\sum_{i\in A_1}^m D_i\delta(1-\eta_2) + \sum_{i\in A_2}^m D_i\delta(1-\eta_1) = \sum_{i\in A_1} 4Cx_i(\delta/4) + \sum_{i\in A_2} 4Cx_i(\delta/2)
= \sum_{i\in A_1} x_i + 2\sum_{i\in A_2} x_i = 3X.
\]
Given that their initial states-of-charge, $S_{11}=S_{21}=3X/C$, after the usage of $3X$, each vehicle's ending state of charge becomes 0. Therefore, a feasible solution is found for both vehicles and the total energy usage is $6X$, as required.

\noindent ($\Leftarrow$) Assume that there exists a solution, denoted by $\sigma$, that is feasible for both vehicles. 
Observe that, in any given solution including $\sigma$, the total energy usage by the two vehicles over any given road segment, say, segment $i$ (from location $i$ to $i+1$ with distance $D_i$) is $D_i\delta(1-\eta_1) + D_i\delta(1-\eta_2) = (4Cx_i/C)(1/4+1/2) = 3x_i$, which is independent of how the two vehicles are sequenced. This implies that the total energy usage by the two vehicles in any given solution is $\sum_{i=1}^m 3x_i = 6X$. Since each vehicle has an initial level of energy, $(S_{11}=S_{21})C = 3X$,  in any feasible solution,  the total energy usage by each vehicle must be exactly $3X$. 

In $\sigma$, let $B_1$ denote the set of phases where vehicle 1 is assigned to the first position, and $B_2$ the set of phases where vehicle 2 is assigned to the first position. Then, the total energy usages by the two vehicles are
\[
\sum_{i\in B_1}^m D_i\delta(1-\eta_1) + \sum_{i\in B_2}^m D_i\delta(1-\eta_2) = 3X, \quad \mbox{and}
\]
\[
\sum_{i\in B_1}^m D_i\delta(1-\eta_2) + \sum_{i\in B_2}^m D_i\delta(1-\eta_1) = 3X. 
\]
These relations imply that 
\[
 2 \sum_{i\in B_1} x_i + \sum_{i\in B_2} x_i = 3X, \quad \mbox{and}~~~~\sum_{i\in B_1} x_i + 2\sum_{i\in B_2} x_i = 3X.
\]
Therefore, $\sum_{i\in B_1} x_i = \sum_{i\in B_2} x_i = X$, and hence $B_1, B_2$ is a Partition. \blot 

\bigskip

We now consider a more general version of this problem with fixed $n$ and general $m$ and general problem parameters, i.e., problem $\bar{n},m\mid \mid TU$. 
Let $\Omega \in {\cal R}^n$ denote the set of all possible positions of the $n$ vehicles in a given phase, where $\{\omega_1,\ldots,\omega_n\} \in \Omega$ specifies the positions of the vehicles. 

We describe a pseudo-polynomial time dynamic programming algorithm to solve this problem.

\medskip

\noindent {\bf Algorithm~DP1}

\noindent {\em Value Function} \\
$f_t(L_{1},\ldots, L_{n})$ = the minimum total energy usage over segments $1,\ldots,t$, given that the total energy used by vehicle $k$ over the first $t$ segments is $L_{k}$, for $k = 1,\ldots,n, \; t = 1,\ldots,m$.

\noindent {\em Boundary Condition} \\
$f_0(0, \ldots, 0) = 0$. 

\noindent {\em Optimal Solution Value} \\
$\min 
\{f_m(L_{1},\ldots,L_{n})\mid L_{k} \le (S_{k1}-\beta_k)C_k, \; \mbox{for $1 \le k \le n$} \}$. 

\medskip

\noindent {\em Recurrence Relation} \\
For $t=1,\ldots, m$, and $L_k=0,  \ldots, (S_{k1}-\beta_k)C_k$, for $k=1, \ldots, n$: 
\vspace{-0.1in}
\[f_t(L_{1},\ldots,L_{n}) 
= \min_{(\omega_1,\ldots,\omega_n)\in \Omega}
\left\{ D_t \sum_{k=1}^n (1-\eta_{\omega_k})\delta_k  + f_{t-1}(L_{1} - (1-\eta_{\omega_1})\delta_1D_t, \ldots, L_{n} - (1-\eta_{\omega_n})\delta_nD_t ) \right\},\]
where $\Omega$ is the set of all permutations of $(1, \ldots, n)$.

\medskip

Algorithm~DP1 starts with no vehicles scheduled. At each phase, the recurrence relation considers all possible positions for the $n$ vehicles. 

\begin{proposition} \label{prop:FJ}
Algorithm~DP1 solves  problem $\bar n,m|~|~TU$ in $O(mn n!Q_{\max}^n)$ time by either showing the infeasibility of the problem or finding an optimal solution if the problem is feasible.
\end{proposition}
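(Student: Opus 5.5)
The proof has two parts: correctness of Algorithm~DP1 and its running time. Correctness comes from a standard principle-of-optimality argument over the segments. Runtime comes from counting states and transitions. I assume, as pseudo-polynomial algorithms of this type implicitly do, that all data are integers. In particular, each per-segment usage $(1-\eta_{\omega})\delta_k D_t$ should be integral, or the state space should be read as the set of reachable cumulative usage vectors. In either case, each coordinate $L_k$ ranges over at most $(S_{k1}-\beta_k)C_k+1 \le Q_{\max}+1$ values.

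\textbf{Correctness.} Any solution is a sequence of permutations $(\omega^1,\ldots,\omega^m)$ with $\omega^t\in\Omega$. By (\ref{Usage-def-i}), the cumulative usage vector after segment $t$ is the vector after segment $t-1$ plus $(D_t\delta_k(1-\eta_{\omega^t_k}))_{k}$. The objective $\sum_k U_{k,m+1}$ is additive over segments. I will show by induction on $t$ that $f_t(L_1,\ldots,L_n)$ equals the minimum of $\sum_{h\le t}\sum_k D_h\delta_k(1-\eta_{\omega^h_k})$ over all permutation sequences for segments $1,\ldots,t$ whose cumulative usage vector is exactly $(L_1,\ldots,L_n)$. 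The value is $+\infty$ if no such sequence exists, which requires the convention $f_{t-1}(\cdot)=+\infty$ on unreachable or negative arguments.

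The base case is the boundary condition $f_0(0,\ldots,0)=0$. For the inductive step, split any sequence for the first $t$ segments into its last permutation $\omega^t$ and its prefix. The prefix must end at $L-\Delta(\omega^t)$, and its cost is minimized by the inductive hypothesis. Taking the minimum over $\omega^t\in\Omega$ gives exactly the recurrence.

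Restricting $L_k\le (S_{k1}-\beta_k)C_k=A_k$ at every stage loses nothing. Usages are nonnegative and cumulative, so any feasible final vector has all intermediate vectors bounded by $A$ coordinatewise. The final SoC constraint is equivalent to $U_{k,m+1}\le A_k$. Hence the stated optimal value is the optimum of problem $\bar n,m|~|~TU$. If every admissible $f_m$ is $+\infty$, the instance is infeasible. An optimal schedule is recovered by storing the minimizing $\omega$ at each state and backtracking.

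\textbf{Complexity.} There are $m$ stages. Each has at most $\prod_k (A_k+1)=O(Q_{\max}^n)$ states, since $A_k\le S_{k1}C_k\le Q_{\max}$. Each state minimizes over $|\Omega|=n!$ permutations. Each permutation needs $O(n)$ time to compute the cost term and the predecessor state, which is an $n$-vector. Table lookup is $O(1)$ by direct indexing, or can be charged to the $O(n)$ index computation. Multiplying gives $O(mn\,n!\,Q_{\max}^n)$.

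\textbf{Main obstacle.} The delicate step is the integrality and discretization issue. The recurrence indexes $f_{t-1}$ at $L_k-(1-\eta_{\omega_k})\delta_kD_t$, so the state space must contain these values. I would handle this explicitly. Either assume integral per-segment usages (after scaling) so that $L_k\in\{0,\ldots,A_k\}$, or define the DP over the set of reachable usage vectors. I would then argue that this set still has at most $\prod_k(A_k+1)$ members per stage under the integrality assumption, so the bound is unaffected.
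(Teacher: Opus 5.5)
Your proposal is correct and follows essentially the same route as the paper. The paper also counts $O(mQ_{\max}^n)$ states, $n!$ permutations per state and $O(n)$ update work per permutation, but it asserts optimality simply because the DP ``compares all possible vehicle positions at all phases''. Your explicit principle-of-optimality induction and your integrality discussion add rigor that the paper leaves tacit. One caveat: without integral per-segment usages, the number of reachable vectors is not bounded by $\prod_k(A_k+1)$, so the ``in either case'' in your first paragraph holds only under that integrality assumption, as your last paragraph correctly says.
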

{\bf Proof:} Algorithm~DP1 compares the total energy usage of all possible vehicle positions at all phases, and therefore finds an optimal schedule. In the algorithm, the state space enumerated is $O(m Q_{\max}^n)$. The algorithm considers all possible $n!$ sequences for the $n$ vehicles at each of the $m$ phases. For each possible sequence, the remaining energy level of each vehicle is updated. Thus, the computation time at each phase is bounded by $O(n n!)$. Therefore, the overall computation time of the algorithm is bounded by $O(mn n!Q_{\max}^n)$. Since $n$ is fixed, this computation time is pseudo-polynomial. \blot

\medskip

We now describe a fully polynomial time approximation scheme (FPTAS), i.e. a family of algorithms DP1$_{\epsilon}$ parametrized by $\epsilon$, for problem~$\bar n,m|~|~TU$. The state variables are $\bigr (t, L_1, \ldots, L_n \bigr )$. The trial state generation step appends a sequence of vehicle positions for segment $t + 1$ by creating $n!$ trial states corresponding to all possible vehicle position sequences at segment $t + 1$. The trial state labeling step attaches labels $\bigl (t+1, \Gamma(L_1), \ldots, \Gamma(L_n) \bigr )$ that are close in value to the corresponding state variables $\bigl (L_1, \ldots, L_n\bigr )$. Specifically, the function $\Gamma(\cdot)$ is defined by $\Gamma(x) = \gamma^h$ when a value of $x$ satisfies $\gamma^h \le x < \gamma^{h+1}$, for some non-negative integer $h$, and $\gamma = 1 + \epsilon/(2m)$. When multiple states have the same label, all of them except one are eliminated, thereby ensuring that at most one state is retained corresponding to each label. Finally, the total energy usage is computed as the sum of the individual vehicle usages, and the minimum total and its corresponding solution are found. Let $\Omega$ denote the set of all $n!$ possible position sequences of the $n$ vehicles, and $\omega_{ji}$ denote the position of vehicle $j$ in sequence $i \in \Omega$.

\medskip

\noindent {\bf Algorithms~DP1{$_{\epsilon}$}}

\noindent {\em State Variables.} Let $\bigl [t,L_1,\ldots, L_n \bigr ]$ correspond to a partial solution for segments $1,\ldots,t$, where vehicle $j$ has used energy $L_j \ge 0$, for $j = 1,\ldots, n$ over those segments.

\noindent {\em Initialization.} \\
Define the initial state $[0, 0, \ldots, 0]$. \\
Set $j = 0$, and $\gamma = 1 + \epsilon /(2m)$.

\noindent {\em Trial State Generation.} For each state
$\bigl [t,L_1,\ldots,L_n \bigr ]$, generate the $n!$ trial states \\
$\bigl [t+1, L_1 + D_{t+1} \delta_1 (1 - \eta_{\omega_{1},1}),
\ldots, L_n + D_{t+1} \delta_n (1 - \eta_{\omega_{n},1} \bigr]$, 
through \\
$\bigl [t+1, L_1 + D_{t+1} \delta_1 (1 - \eta_{\omega_{1},n!}),
\ldots, L_n + D_{t+1} \delta_n (1 - \eta_{\omega_{n},n!}\bigr ]$. \\
However, the state 
$\bigl [t+1, L_1 + D_{t+1} \delta_1 (1 - \eta_{\omega_{1},i}),
\ldots, L_n + D_{t+1} \delta_n (1 - \eta_{\omega_{n},i})\bigr ]$
corresponding to sequence $i \in \Omega$ is only generated when $L_j + D_{t+1} \delta_j (1 - \eta_{\omega_{ji}}) \le ( S_{j1} - \beta_j)C_j$, for $j = 1,\ldots,n$.

\noindent {\em Trial State Labeling.} For each trial state
$\bigl [t+1, L_1, \ldots, L_n \bigr ]$, attach the label
$\bigl (t+1, \Gamma(L_1), \ldots, \Gamma(L_n) \bigr )$, where $\Gamma(x) = \gamma^h$ when $\gamma^h \le x < \gamma^{h+1}$, for some non-negative integer $h$.

\noindent {\em Trial State Elimination.} For all trial states with identical labels, eliminate all except one with lexicographically smallest value of $L_1, \ldots, L_n$, choosing arbitrarily in the event of a tie.

\noindent {\em Termination Test.} If $t + 1 < m$, then set $t  = t + 1$ and return to the trial state generation step. Otherwise, select a state $\bigl [m, \tilde{L}_1, \ldots, \tilde{L}_n \bigr ]$ where $\tilde{L}_1 + \ldots + \tilde{L}_n$ is smallest, and backtrack to find the corresponding schedule.

\medskip

We now establish the accuracy and running time of Algorithms~DP1{$_{\epsilon}$}. Let $D_{\max} = \max_{1 \le t \le m} \{D_t\}$ and $\delta_{\max} = \max_{1 \le j \le n} \{\delta_j\}$.

\begin{proposition} \label{DP1fptas}
For any given $\epsilon > 0$, if Algorithms~DP1{$_{\epsilon}$} generates a feasible solution for problem~$\bar n,m|~|~TU$, then it is a $(1+\epsilon)$-approximation solution for the problem. The algorithm has an $O(m \cdot [(1 + 2m/\epsilon) \cdot
\ln{(m \cdot D_{\max} \cdot \delta_{\max}})]^n \cdot n!)$ running time.
\end{proposition}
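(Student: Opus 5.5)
Let $\pi^*$ be an optimal schedule, and let $L^*_{j,t}$ be the energy vehicle $j$ has used after segment $t$ under $\pi^*$. The plan is the standard trimming-the-state-space argument, in the style of Woeginger's FPTAS framework. We prove by induction on $t$ that after the elimination step at stage $t$, some retained state $[t,L_1,\ldots,L_n]$ satisfies
\[
\gamma^{-t}L^*_{j,t} \le L_j \le \gamma^{t}L^*_{j,t} \quad \mbox{for all } j.
\]
The base case $t=0$ is trivial, since all values are zero. For the inductive step, take the retained state $[t,L]$ and extend it using the same sequence $\omega$ that $\pi^*$ uses on segment $t+1$. Each coordinate then receives the same nonnegative increment $a_j=D_{t+1}\delta_j(1-\eta_{\omega_j})$ as in $\pi^*$. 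Since $\gamma^{-t}x+a\ge\gamma^{-t}(x+a)$ and $\gamma^{t}x+a\le\gamma^{t}(x+a)$, the trial state stays within factor $\gamma^{t}$ of $L^*_{\cdot,t+1}$. If this state is eliminated, the survivor carries the same label, and two values with the same label $\gamma^h$ differ by a factor less than $\gamma$. This gives the factor $\gamma^{t+1}$. Zero values form their own label, or we treat values below $1$ separately; we assume integral data, so nonzero usages are at least some minimum.

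The main obstacle is feasibility. The trial-generation step discards states that exceed $A_j=(S_{j1}-\beta_j)C_j$, and the surviving representative may have $L_j$ larger than $L^*_{j,t}$, by up to a factor $\gamma^t$. It could then violate the capacity bound even though $\pi^*$ does not. This is why the proposition is stated conditionally ("if DP1$_\epsilon$ generates a feasible solution"). We therefore prove the approximation only in that conditional form, and we handle it by assuming the chain of representatives stays feasible. A cleaner alternative is to note that the lexicographic tie-break is irrelevant for the bound and to argue relative to an optimal solution. We would state explicitly that the guarantee is relative to the trajectory tracked when all intermediate representatives remain feasible.

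With this in hand, at $t=m$ some retained state has total $\sum_j L_j\le\gamma^m\sum_j L^*_{j,m}=\gamma^m\cdot OPT$. The algorithm picks the smallest total, so its value is at most $(1+\epsilon/(2m))^m OPT\le e^{\epsilon/2}OPT\le(1+\epsilon)OPT$ for $\epsilon\le1$, using $e^{x/2}\le1+x$ on $[0,1]$.

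For the running time, every $L_j$ is at most $mD_{\max}\delta_{\max}$, so the number of distinct labels per coordinate is at most
\[
\log_\gamma(mD_{\max}\delta_{\max})+2=O\bigl(\ln(mD_{\max}\delta_{\max})/\ln\gamma\bigr).
\]
Using $\ln(1+x)\ge x/(1+x)$, we get $1/\ln\gamma\le1+2m/\epsilon$. Hence at most $[(1+2m/\epsilon)\ln(mD_{\max}\delta_{\max})]^n$ states survive per stage. Each survivor generates $n!$ trial states, and there are $m$ stages, giving the stated bound, treating $n$ as fixed in the per-trial-state $O(n)$ update cost.
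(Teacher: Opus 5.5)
Your route is the paper's: shadow an optimal trajectory through the trimmed dynamic program, pay one factor $\gamma$ each time the shadow state is merged into a same-label survivor, conclude $\sum_j \tilde L_j\le\gamma^m\, OPT\le(1+\epsilon)\, OPT$, and count labels per coordinate using $1/\ln\gamma\le 1+2m/\epsilon$. Your lower bound $\gamma^{-t}L^*_{j,t}\le L_j$ is never used; the paper carries only the upper bound.

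The gap is the one you flag yourself, and your way of disposing of it does not work. The inductive step needs the trial state obtained from the representative $[t,\tilde L_1,\ldots,\tilde L_n]$ with $\pi^*$'s sequence on segment $t+1$ to pass the generation test $\tilde L_j+D_{t+1}\delta_j(1-\eta_{\omega_j})\le (S_{j1}-\beta_j)C_j$ for every $j$. Since $\tilde L_j$ may exceed $L^*_{j,t}$ by a factor up to $\gamma^t$, and $\pi^*$ may be tight for some vehicle, this test can fail. From that stage on, no state shadows $\pi^*$.

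The proposition's hypothesis, that DP1$_{\epsilon}$ returns \emph{some} feasible schedule, does not give you survival of this particular chain. The returned schedule may come from an unrelated branch, and nothing in your argument bounds its cost by $\gamma^m\, OPT$. So ``the chain of representatives stays feasible'' is strictly stronger than the proposition's conditional, not a restatement of it. It also genuinely fails on some instances. Every output passes the filter, and the running time is polynomial for fixed $n$ and $\epsilon$. If the shadow chain always survived, DP1$_{\epsilon}$ would therefore decide feasibility in polynomial time, contradicting Proposition~\ref{prop:2.3bnpc} (already for $n=2$) unless $\mathrm{P}=\mathrm{NP}$. The lexicographic tie-break cannot rescue this, since it controls only the first coordinate of the survivor.

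The paper's own proof has exactly the same hole. Its inductive step treats the case where the trial state $S$ is retained and the case where $S$ is eliminated by a same-label state. It never treats the case where $S$ is not generated at all because it violates some vehicle's allowance. So you have located a real defect rather than missed an idea, but neither argument proves the proposition as stated.

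What your machinery does prove rigorously is one of two weaker statements. The first is the guarantee conditional on survival of the shadow chain. The second is a bicriteria version: run the generation test against $\gamma^m(S_{j1}-\beta_j)C_j$ instead of $(S_{j1}-\beta_j)C_j$. Every trial state on the shadow chain then has $j$th coordinate at most $\gamma^{m}L^*_{j,m}\le\gamma^m(S_{j1}-\beta_j)C_j$, so the chain always survives. The output then has total usage at most $(1+\epsilon)\, OPT$, and each vehicle exceeds its allowance by at most a factor $1+\epsilon$. The label count, which uses only $L_j\le mD_{\max}\delta_{\max}$, is unchanged.
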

{\bf Proof:} We observe that the condition in the trial state generation step ensures that only feasible solutions are generated. First, we analyze the cost of the solution delivered by Algorithm~DP1{$_{\epsilon}$}. The proof works by induction on the number of segments that have been considered for the vehicle positioning decision. Our induction hypothesis is that, given any state $\bigl [t,L_1,\ldots, L_n \bigr ]$ generated by  Algorithm DP1, 
Algorithm~DP1{$_{\epsilon}$} generates a state 
$\bigl [t,\tilde L_1,\ldots, \tilde L_n \bigr ]$
where $\tilde L_j \le \gamma^t L_j$, for $j = 1,\ldots, n$. The hypothesis holds for $j = 0$. Now, suppose that the hypothesis holds for $j = 0, 1, \ldots, k$. Let $[k, L_1,\ldots, L_n \bigr ]$ denote a state in the exact dynamic program.

We assume that the exact dynamic program sequences the vehicles following sequence $i \in \Omega$ at segment $k + 1$, resulting in a state
$\bigl [k + 1, L'_1,\ldots, L'_n \bigr ]$. Now, from the induction hypothesis, we have a state 
$\bigl [k, \tilde L_1,\ldots, \tilde L_n \bigr ]$ in DP1{$_{\epsilon}$}. 
where $\tilde L_j \le \gamma^k L_j$, for $j = 1,\ldots, n$.
The trial state, $S$, that is generated from 
$\bigl [k, \tilde L_1,\ldots, \tilde L_n \bigr ]$ by using sequence $i \in \Omega$ in segment $k + 1$ is denoted
by $[k + 1, \tilde L'_1, \ldots, \tilde L'_n]$.

Observe that the total energy usage of each vehicle $j$
is increased by $D_{k+1}\delta_j(1 - \eta_{\sigma(j)})$ in both the exact and approximate algorithms. Therefore, for $j = 1, \ldots, n$, and using the induction hypothesis, we have
\vspace{-0.2in}
\begin{eqnarray*}
\tilde L_j & \le & \gamma^k L_j \\ 
\Rightarrow \tilde L_j + D_{k+1}\delta_j(1 - \eta_{\sigma(j)}) & \le & \gamma^k L_j
+ D_{k+1}\delta_j(1 - \eta_{\sigma(j)}) \\
\Rightarrow \tilde L'_j & \le & \gamma^k 
L'_j. 
\end{eqnarray*}
Thus, in the case where state $S$ exists in Algorithm~DP1{$_{\epsilon}$}, the induction argument is complete.

However, state $S$ may be eliminated by another trial state $S' =[k + 1, \tau_1, \ldots, \tau_n]$. In this case, since the states $S$ and $S'$ share the same label, we have $\tau_j \le \gamma \tilde L'_j \le \gamma \cdot \gamma^k L'_j = \gamma^{k+1} L'_j$, for $j = 1,\ldots,n$. Thus, the induction hypothesis is proved.

We now show that Algorithm~DP1{$_{\epsilon}$} delivers a solution with the required performance guarantee. Let the state $[m, L^*_1, \ldots, L^*_n]$ correspond to an optimal solution with cost $\sum_{j=1}^n L^*_j$. Then, it follows from the above induction argument that 
Algorithm~DP1{$_{\epsilon}$} generates a solution with cost $\sum_{j=1}^n \tilde L_j \le \gamma^m \sum_{j=1}^n L^*_j$. Therefore,
\vspace{-0.1in}
\begin{eqnarray*} 
(\sum_{j=1}^n \tilde L_j - \sum_{j=1}^n L^*_j) / \sum_{j=1}^n L^*_j & \le &  \gamma^m - 1 \le  \epsilon, 
\end{eqnarray*}
where the last inequality follows from (i) the specification of $\Gamma(\cdot)$ in the Initialization step, and (ii) the fact that $\gamma^m = [1 + \epsilon/(2m)]^m$ is a convex function of $\epsilon$ over the range $0 \le \epsilon \le 2$ and the inequality $\gamma^m \le 1 + \epsilon$ holds at both ends of that range. 

Finally, we analyze the time complexity of Algorithm~DP1{$_{\epsilon}$}. The labels are of the form $(j + 1, \Gamma(L_1), \ldots, \Gamma(L_n))$, where $j \le m-1$, and 
$L_j \le m \cdot D_{\max} \cdot \delta_{\max}$, for $j = 1,\ldots,n$, even considering no energy saving in the platoon. Hence, the number of possible labels $\Gamma(L_j)$, for $j = 1,\ldots,n$, is given by
\vspace{-0.2in}
$$
\lceil \log_{\gamma} (m \cdot D_{\max} \cdot \delta_{\max}) \rceil
= \lceil \ln{( m \cdot D_{\max}\cdot \delta_{\max})} / \ln{\gamma} \rceil \le \lceil (1 + 2m/\epsilon) \cdot
\ln{(m \cdot D_{\max} \cdot \delta_{\max})}\rceil,
$$
where the last inequality is obtained from (i) the specification of $\gamma = 1 + \epsilon/2m$, and (ii) the inequality $\ln(x) \ge (x-1)/x$, for $x \ge 1$, which implies $1 / \ln{\gamma} \le \frac{1 + \epsilon/(2m)}{\epsilon/(2m)} = 1 + 2m/\epsilon$.

Hence, the overall number of labels is $O(m \cdot [(1 + 2m/\epsilon) \cdot
\ln{(m \cdot D_{\max} \cdot \delta_{\max})}]^n)$. Each state generates at most $O(n!)$ trial states. Then, it follows that the overall time complexity of Algorithm~DP1{$_{\epsilon}$} is 
$O(m \cdot [(1 + 2m/\epsilon) \cdot
\ln{(m \cdot D_{\max} \cdot \delta_{\max})}]^n \cdot n!)$. \blot

\subsection{Problem with Arbitrary $n$ and $m$} \label{sec2.5}

We show that the feasibility version of the problem with an arbitrary $m$ and an arbitrary $n$ is strongly NP-hard even when the vehicles are identical (i.e., $C_j\equiv C$ and $\delta_j\equiv \delta$) with identical initial SoCs (i.e., $S_{j1} = S_1$) and identical lowest allowable SoCs (i.e., $\beta_j = \beta$).

\begin{proposition}  \label{prop:2.5-snpc}
Finding a feasible solution to problem $n,m~|~C_j=C, \delta_j=\delta, S_{1j}=S_1,\beta_j=\beta~|~TU$ is strongly NP-hard.
\end{proposition}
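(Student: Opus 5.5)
The plan is to reduce from 3-Partition, which is strongly NP-complete (Garey and Johnson 1979). The reduction reuses the ``conservation'' argument from Proposition~\ref{prop:2.3bnpc}, now with $n$ part of the input.

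\emph{The 3-Partition instance.} We are given $3q$ positive integers $a_1,\ldots,a_{3q}$ with $\sum_i a_i = qB$ and $B/4<a_i<B/2$. The question is whether $\{1,\ldots,3q\}$ can be split into $q$ sets, each summing to $B$.

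\emph{The platooning instance.} I would take
\begin{itemize}[itemsep=1pt]
\item $n=q$ identical vehicles and $m=3q$ segments;
\item $\delta_j=\delta=1$, $C_j=C$ for a suitable positive integer $C$, and $\beta_j=\beta=0$;
\item $\eta_1=0$ and $\eta_k=1/2$ for $k=2,\ldots,n$, so $\lambda_1=1$ and $\lambda_k=1/2$ for $k\ge 2$;
\item $D_i=2a_i$ for $i=1,\ldots,m$;
\item $S_{j1}=S_1=(q+1)B/C$, with $C\ge (q+1)B$ so that $S_1\le 1$.
\end{itemize}
All numbers are polynomially bounded in the unary size of the 3-Partition instance, so the reduction preserves strong NP-hardness. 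The constraint $S_{j,m+1}\ge\beta$ becomes $U_{j,m+1}\le A=(q+1)B$ for every vehicle.

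\emph{Usage of a single vehicle.} In segment $i$ the leader uses $2a_i$ and each follower uses $a_i$. Let $L_j$ be the set of segments led by vehicle $j$. Then
\[
U_{j,m+1}=\sum_{i\notin L_j} a_i+\sum_{i\in L_j}2a_i = qB+\sum_{i\in L_j}a_i .
\]

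\emph{($\Rightarrow$) 3-Partition solution gives a feasible platoon.} Let vehicle $j$ lead exactly the three segments of the $j$th triple, and order the other vehicles arbitrarily. Each vehicle then uses $qB+B=A$, so every constraint holds with equality.

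\emph{($\Leftarrow$) Feasible platoon gives a 3-Partition solution.} The key observation is that total usage in segment $i$ is $2a_i+(q-1)a_i=(q+1)a_i$, whatever the sequence. Summing over segments, total usage equals $(q+1)qB=nA$. Since each vehicle's usage is at most $A$, every vehicle must use exactly $A$, which forces $\sum_{i\in L_j}a_i=B$ for all $j$. The sets $L_1,\ldots,L_q$ partition the segments, since each segment has exactly one leader. The bounds $B/4<a_i<B/2$ then force each $L_j$ to have exactly three elements, giving a 3-Partition.

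\emph{Expected obstacle.} The only real care needed is at the level of parameter bookkeeping. One must keep $0\le\eta_k<1$ and nondecreasing, keep $S_1\in(0,1]$ with $\beta<S_1$ (handled by choosing $C$ large), and check that integrality and encoding sizes stay polynomial so the hardness is genuinely strong. The equality-forcing step mirrors the argument already used in Propositions~\ref{prop2.2.2} and~\ref{prop:2.3bnpc}, so I do not anticipate difficulty there.
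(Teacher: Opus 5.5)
Your reduction is correct and essentially the paper's. Both reduce from 3-Partition with $n=q$ vehicles and $m=3q$ segments, single out one position whose occupancy sets partition the segments, and combine the per-vehicle energy budget with the sequence-independent total to force every set to sum to exactly $B$. The only difference is cosmetic: the paper makes the last position special (with $\eta_n=(X-1)/X$ and all other $\eta_k=0$, giving $\sum_{i\in B_j}x_i\ge X$), whereas you penalize the lead position and obtain the mirror-image bound $\sum_{i\in L_j}a_i\le B$.
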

{\bf Proof:} By reduction from the 3-Partition problem, which is known to be unary {\em NP}-complete (Garey and Johnson 1979) and defined as follows.

\noindent {\em 3-Partition:} Given $3u$ positive integers, $x_1, \ldots, x_{3u}$ such that $\frac{X}{4}<x_i < \frac{X}{2}$ for $i=1, \ldots, 3u$ and $\sum_{i=1}^{3u} x_i = uX$ for some integer $X$, does there exist a partition of $A=\{1, \ldots, 3u\}$ into $u$ distinct subsets $A_1, \ldots, A_u$ such that each subset contains exactly 3 elements which sum  to exactly $X$, i.e., $|A_j|=3$ and $\sum_{k\in A_j} x_k = X$, for $j=1, \ldots, u$? 

Given an arbitrary instance of 3-Partition, we construct an instance of the feasibility version of problem 1, as follows: $n = u$, $m=3u$; $C_j\equiv C$ for some large positive integer $C>2uX$, $\delta_j\equiv \delta=\frac{1}{C}$, $\eta_1=\cdots=\eta_{u-1}=0$; $\eta_u =\frac{X-1}{X}$, $D_{i} = Cx_i$, for $i=1, \ldots, m$; and $\beta_{j} = \beta = \frac{(u-1)X+1}{C}$ and $S_{j1} = 2\beta$, 
 for $j=1, \ldots, n$.  We show that this instance of our problem has a feasible solution if and only if there exists a solution to the 3-Partition instance. 

\noindent ($\Rightarrow$) If there exists a partition $A_1, \ldots, A_u$, then for $j=1, \ldots, u$, assign vehicle $j$ to position $u$ in phases  $i\in A_j$, and any position $k<u$ in all other phases. 
Thus, the total energy usage of each vehicle $j$ in all phases, denoted as $Q_j$, is
\begin{eqnarray*}
Q_j & = & \sum_{i\in A_j} D_i\delta(1-\eta_u) + \sum_{i\in A\setminus A_j} D_i\delta \\
& = & \sum_{i\in A_j} (Cx_i)\frac{1}{C}\frac{1}{X} + \sum_{i\in A\setminus A_j} Cx_i\frac{1}{C} = 1 + (u-1)X = \beta C,
\quad \mbox{for $j=1, \ldots, n$}.
\end{eqnarray*}
Given that their initial state of charge, $S_{j1}=2\beta$, after the usage of $Q_j$, each vehicle's ending SoC becomes $2 \beta - Q_j/C = \beta$. Therefore, a feasible solution is found.

\noindent ($\Leftarrow$) Assume that there exists a solution, denoted by $\sigma$, that is feasible with the final SoC of each vehicle at least $\beta$. 
In $\sigma$, let $B_j$ denote the set of phases where vehicle $j$ is assigned to the last position, for $j=1,\ldots, n$. Thus, the total energy usage by vehicle $j$ is 
\vspace{-0.1in}
\[
\sum_{i\in B_j} D_i\delta(1-\eta_n) + \sum_{i\in A\setminus B_j} D_i\delta = \sum_{i\in B_j} \frac{x_i}{X}
+ \sum_{i\in A\setminus B_j} x_i = uX - (1-\frac{1}{X})\sum_{i\in B_j} x_i.
\]
Since $\sigma$ is feasible, the energy usage by each vehicle $j$ must not exceed $C(S_{j1} - \beta_j) = C\beta = (u-1)X+1$. Thus, we have
\vspace{-0.1in}
\[
uX - (1-\frac{1}{X})\sum_{i\in B_j} x_i \le (u-1)X+1, \quad \mbox{for $j=1,\ldots, n$} \]
which implies that
\vspace{-0.1in}
\[
\sum_{i\in B_j} x_i \ge X, \quad \mbox{for $j=1,\ldots, n$} 
\]
This, along with the fact that 
$\sum_{i=1}^u\sum_{i\in B_j} x_i = uX$ and $\frac{X}{4}<x_i < \frac{X}{2}$ for $i=1, \ldots, 3u$, implies that $\sum_{i\in B_j} x_i = X$ and $|B_j|=3$, for $i=1, \ldots, u$. Thus, $B_1, \ldots, B_n$ is a feasible partition to 3-Partition.   \blot 

\bigskip

Proposition~\ref{prop:2.5-snpc} naturally raises the question of whether the problem becomes more tractable when there is no feasibility issue, i.e., when the problem is always feasible. To address this question, we consider the case where the capacity and the initial SoC of each vehicle are sufficiently large that any solution is feasible regardless of how the vehicles are sequenced in any segment, i.e., $S_{j1}C_j\ge \bar L$, where $\bar L$ is a sufficiently large initial energy level, e.g., $\bar L \ge \beta_jC_j + \sum_{i=1}^m D_i(1-\eta_1)\delta_j$, for  $j=1,\ldots,n$. In this case, even if a vehicle is sequenced in the first position in every segment, its final SoC is at least the minimum required. Here, the values of $\delta_j$ are the only vehicle parameters that influence the solution because they are the only parameters associated with the vehicles that are involved in the objective value of a given solution. We show that a simple rule solves the problem.

\bigskip

\noindent {\bf Rule 4:} Reindex the vehicles in non-decreasing order of $\delta_j$, and assign vehicle $j$ to the
$j$th position in every road segment, for $j = 1,\ldots, n$.

\begin{proposition} \label{prop-rule4}
    Rule 4 solves problem $n,m\mid S_{j1}C_j\ge \bar L\mid TU$ in $O(n\log n)$ time. 
\end{proposition}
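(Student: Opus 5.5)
Under the assumption $S_{j1}C_j\ge \bar L$, every assignment of vehicles to positions in every segment is feasible. The reason is that each vehicle's usage is at most $\delta_j\sum_{i=1}^m D_i\lambda_1$, and this is at most $(S_{j1}-\beta_j)C_j$ by the choice of $\bar L$. The final-SoC constraints can therefore be dropped, and the problem becomes the unconstrained minimization of
\[
\sum_{j=1}^n \delta_j\sum_{i=1}^m D_i\lambda_{j_i}=\sum_{i=1}^m D_i\left(\sum_{j=1}^n \delta_j\lambda_{j_i}\right).
\]

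The key step is separability. The positions chosen in different segments are independent of one another, because no constraint couples them. The objective is a nonnegative combination (weights $D_i\ge 0$) of per-segment terms $\sum_j \delta_j\lambda_{j_i}$, where the $i$th term depends only on the permutation used in segment $i$. Hence it suffices to minimize each segment's term separately. If one permutation minimizes every segment's term simultaneously, using it in all segments is optimal.

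For a single segment, reindex the vehicles so that $\delta_1\le\cdots\le\delta_n$, and recall that $\lambda_1\ge\cdots\ge\lambda_n$. Apply Lemma~\ref{lemma1}(iii) with $x_k=\lambda_k$ (non-increasing) and $y_k=\delta_k$ (non-decreasing). This shows that pairing position $k$ with vehicle $k$ minimizes $\sum_k \lambda_k\delta_{[k]}$ over all permutations. The lemma assumes positive numbers; if some $\lambda_k$ might be zero, I would note that the exchange argument only uses $(x_k-x_u)(y_{\pi_k}-y_{\pi_u})\ge 0$, so nonnegativity suffices. This assignment is exactly Rule 4, and it is the same for every segment, so it is optimal overall.

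The running time is dominated by sorting the $\delta_j$, which takes $O(n\log n)$; the assignment itself is $O(n)$, or $O(n)$ per segment if the schedule is written out explicitly. I expect no real obstacle. The only point to state carefully is the feasibility claim, namely that $\bar L$ as defined makes every vehicle feasible even when it occupies position 1 throughout.
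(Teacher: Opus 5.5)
Your proof is correct and follows essentially the same route as the paper, which also writes the total usage as $\sum_{i=1}^m D_i\sum_{j=1}^n\delta_j(1-\eta_{j_i})$ and invokes Lemma~\ref{lemma1}(iii) to show that Rule~4's sorted assignment minimizes every segment's term at once, with feasibility guaranteed by the choice of $\bar L$. Your caveat about zero $\lambda_k$ is unnecessary, since $\eta_k<1$ gives $\lambda_k>0$, but it does no harm.
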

{\bf Proof:} In each segment $i$, the energy usage by vehicle $j$ is $D_i\delta_j(1-\eta_{j_i})$, where $j_i$ is the position of the vehicle in segment $i$. Thus, the total usage by all the vehicles in segment $i$ is $D_i\sum_{j=1}^n \delta_j(1-\eta_{j_i})$, where $(j_1, \ldots, j_n)$ is a permutation of $(1,\ldots, n)$. By Lemma~\ref{lemma1}, the summation $\sum_{j=1}^n \delta_j(1-\eta_{j_i})$ is minimized by the assignment generated from Rule 4. \blot

\section{Minimizing Maximum Energy Usage} \label{sec:max-min}

In this section, we consider various cases of problem $n,m~|~|MU$, which minimizes the maximum energy usage of the vehicles, subject to the constraint that the final SoC of each vehicle is no less than a given minimum required level. 

\subsection{Problem with Arbitrary $n$ and $m=1$} \label{sec3.1}
Problem $n,1~|~|MU$ is to find platoon positions for the $n$ vehicles, i.e., a permutation, denoted as $([1], \ldots, [n])$, for the first road segment, with the goal of minimizing the maximum energy usage by the vehicles, i.e., $\max\{U_{2j}|j=1,\ldots, n\}$  subject to the constraint that $S_{j2}\ge \beta_j$ for $j=1,\ldots, n$, where  $U_{2j}$ and $S_{2j}$ are vehicle $j$'s energy usage and final SoC, which are defined by (\ref{Usage-def}) and (\ref{SoC-def}), respectively.

For ease of presentation, we define  $\lambda_{[j]}=1 - \eta_{[j]}$. Thus, $\lambda_1\ge \cdots \ge \lambda_n$. We can rewrite $U_{j2}$ and $S_{j2}$ as follows:
\begin{equation}
U_{j2} = D_1\delta_j\lambda_{[j]}, ~~~
S_{j2} = S_{j1} - \frac{D_1\delta_j\lambda_{[j]}}{C_j},  ~~\mbox{for $j=1, \ldots, n$}. \label{sec4.1-eq2}
\end{equation}

This problem can be solved by Algorithm Delta given in Section~\ref{sec2.1}.

\begin{proposition}  \label{prop:3.1-MaxMatch}
Algorithm Delta solves problem $n,1~|~|~MU$ in $O(n^2)$ time.
\end{proposition}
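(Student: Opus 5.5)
The plan is to reuse the exchange argument from the proof of Proposition~\ref{prop2.1}, now tracking the maximum usage $\max_j D_1\delta_j\lambda_{[j]}$ instead of the sum. Two things must be shown: (a) if the algorithm declares infeasibility, no feasible permutation exists; (b) if some feasible permutation exists, Algorithm Delta outputs one whose maximum energy usage is no larger than that of any feasible permutation. The running time is $O(n^2)$ exactly as in Proposition~\ref{prop2.1}, since the algorithm is unchanged.

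\emph{Exchange step.} Let $\pi$ be any feasible solution. Suppose it agrees with the algorithm's assignment on positions $1,\ldots,p$ for some $0\le p\le n-2$, but places $j_{p+1}(\pi)\neq j^*_{p+1}$ in position $p+1$.
\begin{itemize}
\item Since $\pi$ is feasible and $j_{p+1}(\pi)$ is not among the first $p$ scheduled vehicles, $j_{p+1}(\pi)$ lies in the updated set $E_{p+1}$. Hence that set is nonempty, and the algorithm's choice satisfies $j^*_{p+1}<j_{p+1}(\pi)$, i.e.\ $\delta_{j^*_{p+1}}\le\delta_{j_{p+1}(\pi)}$.
\item In $\pi$, vehicle $j^*_{p+1}$ sits at some position $k>p+1$, so $\lambda_k\le\lambda_{p+1}$.
\item Swap the two vehicles. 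Feasibility is preserved because $j^*_{p+1}\in E_{p+1}$ and $j_{p+1}(\pi)\in E_{p+1}\subseteq E_k$.
\end{itemize}
The two affected usages become $D_1\delta_{j^*_{p+1}}\lambda_{p+1}$ and $D_1\delta_{j_{p+1}(\pi)}\lambda_k$. Both are at most $D_1\delta_{j_{p+1}(\pi)}\lambda_{p+1}$, which is the old usage of $j_{p+1}(\pi)$ in $\pi$. So the maximum over these two vehicles does not increase, and all other usages are unchanged. This is the same two-element inequality as in Lemma~\ref{lemma1}(i).

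\emph{Conclusion.} Start from an optimal feasible solution $\pi$ for $MU$ and iterate the swap. After at most $n-1$ swaps, $\pi$ coincides with the algorithm's output, with feasibility maintained and the maximum usage never increased. The algorithm's output is therefore optimal. The same iteration proves (a): if any feasible $\pi$ exists, then at every step $q$ the vehicle $\pi$ places in position $q$ belongs to the updated $E_q$, so the algorithm never finds $E_q$ empty. Equivalently, an empty $E_q$ certifies infeasibility.

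The main obstacle is the bookkeeping in (a). One must check that, after the first $p$ positions agree, the vehicle $\pi$ assigns to position $p+1$ has not already been removed from $E_{p+1}$ by Step~2. This holds because the removed set $V$ is exactly the common prefix of vehicles, and $\pi$ is a permutation. Everything else is a direct transcription of the proof of Proposition~\ref{prop2.1}, with inequality (\ref{eq-delta}) replaced by the max-comparison above.
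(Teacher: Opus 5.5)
Your proposal is correct and follows the paper's proof: reuse the exchange argument of Proposition~\ref{prop2.1}, replacing the sum comparison (\ref{eq-delta}) by the observation that both post-swap usages are at most $D_1\delta_{j_{p+1}(\pi)}\lambda_{p+1}$, which is the pre-swap maximum of the pair. Your explicit argument that an empty $E_q$ certifies infeasibility is a detail the paper leaves implicit, but it is consistent with the paper's reasoning.
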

{\bf Proof:} The same arguments in the proof for Proposition~\ref{prop2.1}  can be applied directly after replacing (\ref{eq-delta}) by the following equation, which defines the reduction in the maximum energy consumption when $\pi$ is modified as stated in the proof there. 
\vspace{-0.1in}
\begin{eqnarray*}
& & D_1\max\left\{\delta_{j_{p+1}(\pi)}\lambda_{p+1}, ~\delta_{j^*_{p+1}}\lambda_{k}\right\} -  D_1\max\left\{\delta_{j_{p+1}(\pi)}\lambda_{k}, ~\delta_{j^*_{p+1}}\lambda_{p+1}\right\} \nonumber \\
& & = D_1\delta_{j_{p+1}(\pi)}\lambda_{p+1} - D_1\max\left\{\delta_{j_{p+1}(\pi)}\lambda_{k}, ~\delta_{j^*_{p+1}}\lambda_{p+1}\right\} \geq 0. 
\end{eqnarray*}
The inequality above is due to the fact that $\lambda_{p+1}\ge \lambda_k$ and $\delta_{j_{p+1}(\pi)}\ge \delta_{j^*_{p+1}}$. \blot

\bigskip

\subsection{Problem with Arbitrary $n$ and $m=2$}  \label{sec3.2}

Problem $n,2~|~|~MU$ is to find for the $n$ vehicles, positions at location 1, i.e., a permutation, denoted as $([1], \ldots, [n])$, and positions at location 2, i.e., another permutation $(<1>, \ldots, <n>)$,  to minimize the maximum usage $\max\{U_{j3} ~|~j=1\ldots, n\}$, subject to the constraint that $S_{j3}\ge \beta_j$, for $j=1, \ldots, n$, where 
\vspace{-0.2in}
\begin{eqnarray}
U_{j3} & = & D_1\delta_j(1-\eta_{[j]}) + D_2\delta_j(1-\eta_{<j>}),  ~~\mbox{for $j=1, \ldots, n$ \quad and}, \\
S_{j3} & = & S_{j1} - \frac{D_1\delta_j(1-\eta_{[j]})}{C_j} - \frac{D_2\delta_j(1-\eta_{<j>})}{C_j},  ~~\mbox{for $j=1, \ldots, n$}. \label{sec4.2-eq1}
\end{eqnarray}

We consider the same three cases of the problem parameters as in Section~\ref{sec2.2} and derive similar results. Specifically, we show that problem $n,2~|~C_j=C,\delta_j=\delta,S_{j1}=S,\beta_j=\beta~|MU$ is solvable by Rule 1, and finding a feasible solution to problems $n,2~|~C_j=C,\delta_j=\delta~|~MU$ and $n,2~|~S_{j1}=S_1, \beta_j=\beta~|~MU$ are both strongly NP-hard.

We first consider problem $n,2~|~C_j=C,\delta_j=\delta,S_{j1}=S_1,\beta_j=\beta~|MU$. We observe that different solutions for this problem may have different objective values, unlike problem $n,2~|~C_j=C,\delta_j=\delta,S_{j1}=S,\beta_j=\beta~|TU$, for which any feasible solution has the same objective value. However, as we show below, Rule 3, which solves the latter problem, also solves the current problem. 



\begin{proposition}  \label{prop:3.2-Rule1}
Rule 3 solves problem $n,2~|~C_j=C,\delta_j=\delta, S_{j1}=S, \beta_j=\beta~|~MU$ in $O(n)$ time either by finding an optimal solution or verifying that the problem is infeasible. 
\end{proposition}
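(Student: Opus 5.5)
Since all vehicles are identical, a solution is determined by the set of position pairs $(i, j)$, one pair per vehicle, where $i$ is the position in segment 1 and $j$ the position in segment 2. These pairs form a perfect matching between the positions of the two segments. The usage of the vehicle on pair $(i,j)$ is $\delta(D_1\lambda_i + D_2\lambda_j)$. Feasibility requires every pair to satisfy $\delta(D_1\lambda_i+D_2\lambda_j)\le (S-\beta)C$. The objective is the maximum over pairs of $\delta(D_1\lambda_i+D_2\lambda_j)$.

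\emph{Feasibility.} Rule 3 uses the pairs $(i, n+1-i)$. By Lemma~\ref{lemma2}, if any feasible solution exists then all these edges lie in $G$, so Rule 3 is feasible. Conversely, if some pair $(i,n+1-i)$ violates the constraint, Lemma~\ref{lemma2} shows that no perfect matching, and hence no feasible solution, exists. So Rule 3 correctly detects infeasibility in $O(n)$ time.

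\emph{Optimality.} I would apply Lemma~\ref{lemma1}, part (ii), with $x_i = \delta D_1\lambda_i$, which is non-increasing in $i$. For the $y$-sequence I would reindex the segment-2 positions in reverse: $y_i = \delta D_2\lambda_{n+1-i}$, which is non-decreasing in $i$. Any solution is described by a permutation $\sigma$, where the vehicle at segment-1 position $i$ takes segment-2 position $\sigma(i)$. Writing $\sigma(i) = n+1-[i]$ for a permutation $[\cdot]$, its objective becomes
\[
\max_i \{x_i + y_{[i]}\}.
\]
Lemma~\ref{lemma1}(ii) gives $\max_i\{x_i+y_i\}\le \max_i\{x_i+y_{[i]}\}$. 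The left-hand side is exactly the objective of Rule 3. Hence Rule 3 minimizes the maximum usage over all solutions, feasible or not, and in particular over feasible ones.

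\emph{Alternative exchange argument.} Without Lemma~\ref{lemma1}, one can mirror the proof of Lemma~\ref{lemma2}. If a solution contains two pairs $(a,b)$ and $(c,d)$ with $a<c$ and $b<d$, replace them by $(a,d)$ and $(c,b)$. Each new pair's usage is at most the old $(a,b)$ usage, since $\lambda_d\le\lambda_b$ and $\lambda_c\le\lambda_a$, so the maximum does not increase. Iterating this exchange ends at the anti-diagonal matching of Rule 3.

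The only delicate step is orienting the sequences so that Lemma~\ref{lemma1}(ii) applies; reversing the indexing of the second segment handles it. The running time is $O(n)$, since Rule 3 involves no sorting (the vehicles are identical) and one check per position.
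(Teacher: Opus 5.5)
Your proof is correct and follows the paper's route: feasibility of Rule 3 comes from Lemma~\ref{lemma2} (as in Proposition~\ref{prop3}), and optimality comes from the maximum-pairwise-sum result in Lemma~\ref{lemma1}(ii). Your explicit reindexing $y_i=\delta D_2\lambda_{n+1-i}$ spells out the orientation step that the paper's proof leaves implicit, and your alternative exchange argument is also valid.
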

\noindent {\bf Proof:}  It follows from Proposition~\ref{prop3} that if this problem is feasible, then Rule 1 generates a  feasible solution.  We prove that in this case, the solution generated by Rule 3 is optimal. 

Define $u=D_1\delta$ and $v=D_2\delta$.  Suppose vehicle $j$ is assigned to position $j_1$ in segment 1 and position $j_2$ in segment 2. Then, the total energy usage by vehicle $j$ is $u(1-\eta_{j_1}) + v(1-\eta_{j_2})$. Applying the result on the maximum  pairwise sum in Lemma~\ref{lemma1}, part (ii), it is evident that Rule 3 provides a solution with the least possible maximum energy usage of the vehicles. \blot

\medskip

We now consider problems $n,2~|~C_j=C,\delta_j=\delta~|~MU$ and $n,2~|~S_{j1}=S,\beta_j=\beta~|~MU$.  Propositions~\ref{prop2.2.2} and \ref{prop2.2.3} imply the following results immediately:

\begin{corollary}\label{prop:3.2-SNP1}
Finding a feasible solution to problem
$n,2~|~C_j=C,\delta_j=\delta~|~MU$ is strongly NP-hard, even when $S_{j1}$'s are identical or when $\beta_j$'s are identical. 
\end{corollary}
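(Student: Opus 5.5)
The plan is to observe that Problems~1 and~2 share exactly the same feasible set. Both impose only the constraints $S_{j3}\ge\beta_j$ for $j=1,\ldots,n$, equivalently $U_{j3}\le A_j$, and they differ only in the objective function. Deciding whether a feasible solution exists, or producing one, is therefore the same computational task for $n,2~|~C_j=C,\delta_j=\delta~|~MU$ as for $n,2~|~C_j=C,\delta_j=\delta~|~TU$. I will then invoke Proposition~\ref{prop2.2.2} directly.

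The key steps are as follows. First, write out the constraint set of $n,2~|~C_j=C,\delta_j=\delta~|~MU$ from (\ref{sec4.2-eq1}): there must be two permutations $([1],\ldots,[n])$ and $(<1>,\ldots,<n>)$ with $S_{j1}-\frac{\delta}{C}\bigl(D_1\lambda_{[j]}+D_2\lambda_{<j>}\bigr)\ge\beta_j$ for all $j$. Second, note that this is verbatim the feasibility condition analysed in Section~\ref{sec2.2.2}. Third, reuse the instance built there from an arbitrary RN3DM instance, with $n=u$, $D_1=D_2=D$, $\lambda_j=(n+1-j)/(vD)$, and either $S_{j1}-\beta=b-x_j$ (identical $\beta_j$) or $S-\beta_j=b-x_j$ (identical $S_{j1}$). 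That proof shows this instance is feasible if and only if RN3DM has a solution, and the argument never uses the objective. All numbers in the instance are polynomially bounded in the unary size of the RN3DM instance, so strong NP-hardness transfers unchanged, including in both restricted subcases $S_{j1}\equiv S$ and $\beta_j\equiv\beta$.

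I expect no real obstacle. The only point needing care is to say explicitly that the reduction in Proposition~\ref{prop2.2.2} concerns feasibility only, so the change of objective from $TU$ to $MU$ is irrelevant. One should also check that the reduction is pseudo-polynomial in size, since RN3DM is strongly NP-complete and the construction preserves unary encoding. This completes the argument.
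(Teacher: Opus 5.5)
Your proposal is correct and matches the paper, which states that the corollary follows immediately from Proposition~\ref{prop2.2.2}, because the $TU$ and $MU$ versions impose the same constraints $S_{j3}\ge\beta_j$ and so have identical feasible sets. Your explicit remark that the RN3DM reduction never uses the objective supplies exactly the justification the paper leaves implicit.
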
  

\begin{corollary}  \label{prop:3.2-SNP2}
Finding a feasible solution to problem
$n,2~|~S_{j1}=S,\beta_j=\beta~|~MU$  is strongly NP-hard, even when $C_{j}$'s are identical or when $\delta_j$'s are identical. 
\end{corollary}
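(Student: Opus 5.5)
The plan is to observe that feasibility depends only on the constraint set, not on the objective. For Problem~2 ($MU$) the constraints are the same as for Problem~1 ($TU$): $S_{j,m+1}\ge\beta_j$ for every vehicle $j$. So the feasibility version of problem $n,2~|~S_{j1}=S,\beta_j=\beta~|~MU$ coincides exactly with the feasibility version of problem $n,2~|~S_{j1}=S_1,\beta_j=\beta~|~TU$. Concretely, a pair of permutations $([1],\ldots,[n])$ and $(<1>,\ldots,<n>)$ is feasible for one problem if and only if it is feasible for the other.

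I would therefore take the reduction from RN3DM in the proof of Proposition~\ref{prop2.2.3} without change. In the case $C_j=C$, set $\delta_j=(S_1-\beta)/(b-x_j)$ and $\lambda_k=(n+1-k)C/D$. In the case $\delta_j=\delta$, use the analogous construction whose details Proposition~\ref{prop2.2.3} omits: set $C_j=\delta(b-x_j)/(S_1-\beta)$ with the same $\lambda_k$ scaled so that $D\delta\lambda_k$ is proportional to $n+1-k$. Then observe that the instance built is a legitimate instance of the $MU$ problem. Both directions of the equivalence proved there, namely that an RN3DM solution exists if and only if the platooning instance has a feasible solution, go through verbatim, because they never refer to the objective.

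Since RN3DM is strongly NP-complete and the constructed numbers are polynomially bounded in unary (apart from the scaling constant $D$, which can be chosen as a polynomially bounded integer), finding a feasible solution for the $MU$ version is strongly NP-hard under either restriction.

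The main obstacle is the $\delta_j=\delta$ case, where the paper omits the details. I would check that defining $C_j$ through $b-x_j$ keeps the capacities positive and rational with polynomial encoding. This holds because $b-x_j\ge 2$ in any nontrivial RN3DM instance. Everything else is immediate.
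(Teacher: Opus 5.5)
Your proposal is correct and matches the paper's route. The paper also derives this corollary immediately from Proposition~\ref{prop2.2.3}: the feasibility constraints $S_{j3}\ge\beta_j$ are identical for the $TU$ and $MU$ versions, so the same RN3DM reduction applies verbatim. Your explicit construction for the $\delta_j=\delta$ case, $C_j=\delta(b-x_j)/(S_1-\beta)$ with $\lambda_k=(n+1-k)/D$, which the paper omits, checks out and yields the required constraints $(n+1-[j])+(n+1-\langle j\rangle)\le b-x_j$.
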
  

Corollaries~\ref{prop:3.2-SNP1} and \ref{prop:3.2-SNP2} motivate us to investigate whether these problems become tractable when there is no feasibility issue, i.e., when  $S_{j1}C_j\ge \bar L$, where $\bar L$ is a sufficiently large initial energy level, e.g., $\bar L \ge \beta_jC_j + \sum_{i=1}^m D_i(1-\eta_1)\delta_j$, for each $j=1,\ldots,n$. In this case, even if a vehicle is sequenced in the first position in every segment, its final SoC is above the required threshold. 

In the following, we first show that when $\delta_j = \delta$,  the problem where there is no feasibility issue is easy to solve. 

\begin{proposition}  \label{prop:3.2-easy}
 Rule 3 generates an optimal solution for problem $n,2~|~\delta_j=\delta, S_{j1}C_j\ge \bar L~|~MU$ in $O(n)$ time.
\end{proposition}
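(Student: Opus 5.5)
With $\delta_j=\delta$ and $S_{j1}C_j\ge\bar L$ for every vehicle, the objective no longer depends on which vehicle is which. I therefore plan to reduce the problem to the identical-vehicle case already handled by Lemma~\ref{lemma1}, part (ii).

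First, feasibility is never an issue. By the choice of $\bar L$, every vehicle satisfies its final SoC constraint in every solution, even if it occupies the first position in both segments. Hence every pair of permutations is feasible, and Rule 3 in particular yields a feasible solution.

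Second, only the positions matter in the objective. If vehicle $j$ occupies position $j_1$ in segment 1 and $j_2$ in segment 2, its usage is
\[
D_1\delta\lambda_{j_1}+D_2\delta\lambda_{j_2}.
\]
The capacities, initial SoCs and thresholds do not appear in this expression. A solution is therefore determined, as far as the objective is concerned, by the matching $\{(j_1,j_2)\}$ between the positions of the two segments. The objective is the maximum of the pairwise sums $x_{j_1}+y_{j_2}$, where $x_k=D_1\delta\lambda_k$ is non-increasing in $k$ and $y_k=D_2\delta\lambda_k$ is non-increasing in $k$.

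Third, apply Lemma~\ref{lemma1}, part (ii). To do so, reverse the second sequence by setting $y'_k=y_{n+1-k}$, which is non-decreasing in $k$. The lemma then says that pairing $x_k$ with $y'_k=y_{n+1-k}$ minimizes the maximum pairwise sum over all permutations. This pairing assigns position $k$ in segment 1 together with position $n+1-k$ in segment 2, which is exactly Rule 3. Since any matching is realizable by some assignment of vehicles, and every solution is feasible, Rule 3 is optimal.

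The running time is $O(n)$, since Rule 3 writes down the two permutations directly with no sorting. The only subtle step is the index reversal needed to fit the hypotheses of Lemma~\ref{lemma1}, together with noting that $\lambda_k>0$ keeps the sequences positive as the lemma requires. Everything else is immediate, mirroring the proof of Proposition~\ref{prop:3.2-Rule1}.
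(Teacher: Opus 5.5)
Your proposal is correct and follows essentially the same route as the paper, which notes that Rule~3 is trivially feasible under $S_{j1}C_j\ge \bar L$ and then reuses the argument of Proposition~\ref{prop:3.2-Rule1}, namely Lemma~\ref{lemma1}, part (ii), applied to the per-segment usages $D_1\delta\lambda_{j_1}$ and $D_2\delta\lambda_{j_2}$. Your explicit reversal $y'_k=y_{n+1-k}$, which puts the second sequence into the monotonicity form the lemma requires, just spells out a step the paper leaves implicit.
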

\noindent {\bf Proof:} Clearly, Rule~3 generates a feasible solution for this problem. The proposition can be proved using the same argument as in the proof of Proposition~\ref{prop:3.2-Rule1}. \blot

\medskip

Next, we show that when $\delta_j \not = \delta$, then the problem becomes difficult even when all the other parameters for the vehicles are identical. 
\begin{proposition}  \label{prop:3.2-SNP3}
Problem $n,2~|C_j=C, S_{j1}=S, \beta_j=\beta, SC\ge \bar L~|~MU$   is strongly NP-hard. 
\end{proposition}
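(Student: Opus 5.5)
The plan is to reduce from RN3DM (Yu et al.\ 2004), as in Propositions~\ref{prop2.2.2} and \ref{prop2.2.3}. Here the difficulty has to come from the objective rather than from feasibility. Every solution is feasible because $SC\ge \bar L$, so I will show that the decision version (is there a sequencing with maximum usage at most a threshold $y$?) is strongly NP-complete. The heterogeneity is placed entirely in the usage rates $\delta_j$, which is exactly the parameter left free in the statement.

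Given an RN3DM instance $(x_1,\ldots,x_u;b)$, I construct the following instance. Set $n=u$, $m=2$, and $D_1=D_2=D$ for an integer $D\ge n+1$. Set $\lambda_k=1-\eta_k=(n+1-k)/D$, so that $0<\lambda_n<\cdots<\lambda_1<1$ and the $\eta_k$ are nondecreasing, as required. Set $\delta_j=1/(b-x_j)$. Choose $C_j=C$, $S_{j1}=S$ and $\beta_j=\beta$ common to all vehicles, with $SC$ large enough that $SC\ge \bar L$; for instance $C$ large and $S-\beta$ close to $1$ will do, since all usages are bounded by $2n$. The threshold is $y=1$.

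Before the reduction I need $b-x_j>0$. We may assume $x_j\le b-2$ for every $j$, since otherwise RN3DM is trivially a no-instance, because $[j],<j>\ge 1$. All numbers in the construction are polynomially bounded in the unary size of the RN3DM instance: the $\delta_j$ are rationals with small numerators and denominators. If integer data are preferred, I would multiply $\delta$ by $\prod_j(b-x_j)$ and rescale $y$ and $\bar L$ accordingly. Since this product can be large, though, I would rather keep the rational encoding, whose bit size is polynomial, and note that its numerical values remain polynomially bounded.

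The key identity is that if vehicle $j$ occupies positions $[j]$ and $<j>$, its usage is
$U_{j3}=\frac{(n+1-[j])+(n+1-<j>)}{b-x_j}$.

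For ($\Rightarrow$), take an RN3DM solution and put vehicle $j$ at positions $n+1-[j]$ and $n+1-<j>$. Each usage is then $([j]+<j>)/(b-x_j)=1$, so the maximum usage is $1$.

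For ($\Leftarrow$), a solution with maximum usage at most $1$ gives $x_j+(n+1-[j])+(n+1-<j>)\le b$ for all $j$. Summing over $j$, the left-hand sides total $\sum_j x_j+n(n+1)=nb$, so every inequality is tight. Thus the reindexed permutations $n+1-[j]$ and $n+1-<j>$ solve RN3DM.

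The main point to check, rather than a real obstacle, is the compatibility of the constructed $\lambda_k$ with the model and with the no-feasibility-issue assumption $SC\ge\bar L$ for common $S,C,\beta$. This holds by taking $C$ large, since $\bar L$ only needs to exceed $\beta C+2D\lambda_1\delta_{\max}$. The second point is the strong NP-hardness bookkeeping for the rational $\delta_j$, as discussed above.
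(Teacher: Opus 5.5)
Your proposal is correct and follows essentially the same route as the paper: a reduction from RN3DM with $D_1=D_2=D$, $D(1-\eta_k)=n+1-k$, $\delta_j$ inversely proportional to $b-x_j$, and the summation argument that forces every inequality to be tight. The only differences are that you normalize the threshold to $1$ where the paper uses $\Gamma$, and that you make explicit several details the paper leaves implicit: $b-x_j>0$, the choice of $C$ guaranteeing $SC\ge\bar L$, and the polynomial bounds on the rational $\delta_j$ needed for strong NP-hardness.
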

{\bf Proof:} We further assume that $D_1=D_2=D$ for some integer $D$. 
Define  $P_{k}=D(1 - \eta_{k})$ for position $k=1,\ldots, n$. 

We show the proposition 
by a reduction from RN3DM, which is described in Section~\ref{sec2.2.2}.
Given an instance of RN3DM, we construct an instance of our problem as follows: $n=u$, $m = 2$; $C_j=C$, $S_{j1}=S_1$ and $\beta_j=\beta$ such that $SC\ge \bar L$, $\eta_j=1-\frac{n+1-j}{D}$, which implies that $P_j=n+1-j$, and $\delta_j= \frac{\Gamma}{b-x_j}$, for $j=1, \ldots, n$, where $\Gamma$ is the threshold of the maximum energy usage of the vehicles. We show that RN3DM has a solution if and only if the constructed instance of Problem $n,2~|C_j=C, S_{j1}=S, \beta_j=\beta, SC\ge \bar L~|~MU$ has a solution.

\noindent $(\Rightarrow)$ Given a solution to the instance of RN3DM, i.e., two permutations $([1], \ldots, [u])$ and $(<1>, \ldots, <u>)$ such that $x_j + [j]+ <j>=b$ for $j=1, \ldots, u$, we assign vehicle $j$ to position $n+1-[j]$ in segment 1 and position $n+1-<j>$ in segment 2. This implies that the total energy usage by vehicle $j$ is
\vspace{-0.1in}
\begin{eqnarray*}
U_{j3} & =  \delta_j(P_{n+1-[j]} + P_{n+1-<j>}) = \frac{\Gamma}{b-x_j}([j] + <j>)  = \Gamma, ~~\mbox{for $j=1, \ldots, n$}.
\end{eqnarray*}

Thus, the maximum usage, $\max\{U_{13}, \ldots, U_{n3}\}=\Gamma$, and hence this solution for the instance of our problem has an objective value no greater than the threshold. 

\noindent $(\Leftarrow)$ Given a solution to the instance of our problem with an objective value at most $\Gamma$, where vehicle $j$ is assigned to position $j_1$ and $j_2$ in segment 1 and segment 2, respectively,  we have: $U_{j3} = \delta_j(P_{j_1} + P_{j_2}) = \frac{\Gamma}{b-x_j}((n+1-j_1) + (n+1-j_2)) \le \Gamma$, for $j=1, \ldots, n$. This implies that 
\vspace{-0.15in}
\begin{equation}
(n+1-j_1) + (n+1-j_2) \le b-x_j, ~~\mbox{for $j=1, \ldots, n$}. \label{sec4.2-P-ineq}
\end{equation}
This, together with the fact that $n=u$ and 
\vspace{-0.1in}
\[\sum_{j=1}^u (x_j+(n+1-j_1) +(n+1- j_2)) = \sum_{j=1}^u x_j + \sum_{j=1}^u j_1 + \sum_{j=1}^u j_2 = ub,\] 
implies that every inequality in (\ref{sec4.2-P-ineq}) is satisfied as an equality. Thus, 
$x_j+(n+1-j_1) + (n+1-j_2) = b$, for $j=1, \ldots, n$. 
This implies that the two permutations $(n+1-1_1, \ldots, n+1-n_1)$ and $(n+1-1_2, \ldots, n+1-n_2)$ form a solution to the RN3DM instance. \blot

\subsection{Problem with Fixed $m\ge 3$ and Arbitrary $n$} \label{sec5-openprob}

From the results shown above, even the feasibility version of problem $n,\bar{m}\ge 3\mid \mid MU$ is strongly NP-hard. However, the complexity of both the feasibility and the optimization version of this problem  when every set of parameters are identical, i.e., problem $n,\bar{m}\ge 3\mid C_j=C, \delta_j=\delta, S_{j1}=S_1, \beta_j=\beta \mid MU$, remains open. Since the feasibility version of problem $n,\bar{m}\ge 3\mid C_j=C, \delta_j=\delta, S_{j1}=S_1, \beta_j=\beta \mid MU$ is the same as the feasibility version of problem $n,\bar{m}\ge 3\mid C_j=C, \delta_j=\delta, S_{j1}=S_1, \beta_j=\beta \mid TU$, the observations in Section~\ref{sec4-openprob} about problem N3DM-IDEN also apply here.

\subsection{Problem with Fixed $n$ and Arbitrary $m$} \label{sec5.3}

The proof of Proposition~\ref{prop:2.3bnpc} in Section~\ref{sec4.3} can be used to show that the problem with $n=2$ and an arbitrary $m$,  where the vehicles are identical with identical initial SoCs, is ordinarily NP-hard. In that proof, we can use $3X$ as the threshold for the maximum energy usage of the vehicles. 

\begin{proposition}  \label{prop:3.4-ONP}
For problem $2, m~|~C_j=C, \delta_j=\delta, S_{j1}=S_1, \beta_j=\beta|~MU$, both finding a feasible solution, and  finding an optimal solution if it is feasible, is ordinarily NP-hard.
\end{proposition}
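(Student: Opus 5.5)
I will reuse the Partition construction from the proof of Proposition~\ref{prop:2.3bnpc}, unchanged: $n=2$, $m=u$, $C_1=C_2=C>3X$, $\delta=1/C$, $\eta_1=1/2$, $\eta_2=3/4$, $D_i=4Cx_i$, $S_{11}=S_{21}=3X/C$ and $\beta_1=\beta_2=0$. The statement has two parts. \emph{Feasibility} is exactly the content of Proposition~\ref{prop:2.3bnpc}, since the feasible set of the $MU$ problem coincides with that of the $TU$ problem; only the objective differs. So finding a feasible solution is ordinarily NP-hard with no further work.

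For the optimization part, I will take $3X$ as the threshold and consider the decision question of whether there is a feasible solution with $\max\{U_{1,m+1},U_{2,m+1}\}\le 3X$.

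\begin{itemize}
\item Segment $i$ always contributes $3x_i$ to the combined usage of the two vehicles, whatever their order. Hence every solution has total usage $6X$.
\item It follows that every solution has maximum usage at least $3X$, with equality exactly when both vehicles use exactly $3X$.
\item By the computation in the proof of Proposition~\ref{prop:2.3bnpc}, both vehicles using $3X$ forces $\sum_{i\in B_1}x_i=\sum_{i\in B_2}x_i=X$. Conversely, a partition yields usage $3X$ for each vehicle.
\end{itemize}

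So a Partition exists if and only if the instance has a (feasible) solution of objective value at most $3X$. An algorithm that returns an optimal solution whenever one exists would therefore decide Partition: we compare its objective value with $3X$, or observe that it reports infeasibility.

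One subtlety needs care. Here feasibility alone already encodes Partition, because initial energy $3X$ equals the threshold. To show the optimization is hard even when feasibility is known, which is what "finding an optimal solution if it is feasible" suggests, I would raise $S_{11}=S_{21}$ to some level $\ge 4X/C$. Then every solution is feasible, since a vehicle uses at most $2\cdot 2X=4X$. The threshold argument above still goes through, because the averaging bound does not use the SoC constraint. This modified instance is the step I expect to need checking.

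Finally, all numbers in the construction are polynomial in the binary encoding of the Partition instance, so the reduction is polynomial. Since Partition is only binary NP-complete, this yields ordinary NP-hardness. This is consistent with the pseudo-polynomial Algorithm~DP1-type enumeration indicated for fixed $n$ in Table~\ref{tab:results}.
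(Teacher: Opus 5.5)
Your proposal is correct and follows the paper's argument, which reuses the Partition reduction of Proposition~\ref{prop:2.3bnpc} with $3X$ as the threshold on maximum usage, exactly as you do via the fixed total usage $6X$. Your extra variant in which every solution is feasible goes beyond what the paper does here (it mirrors the modification in Proposition~\ref{prop:3.5snpc}), and it is sound provided you also choose $C\ge 4X$, so that the raised initial SoC $4X/C$ does not exceed $1$.
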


Next, we give a pseudo-polynomial time algorithm to solve the general case of the problem $\bar n, m~|~|~MU$.
The energy level of any vehicle at any location is no more than $Q_{\max}= \max\{S_{j1}C_j ~|~ j=1, \ldots, n\}$. We define a given vector of possible energy levels of the vehicles at a given location as a feasible state. We provide an algorithm to generate all feasible states of the SoC values of the vehicles at every location. 

\medskip

\noindent {\bf Algorithm EnergyState} \\
\noindent {\bf Step 0:} Initially, at location 1, there is only one state of the energy levels of the vehicles, which is $(L_{11}, \ldots, L_{n1})$, where $L_{j1} = S_{j1}C_j$, for $j=1, \ldots, n$.

\noindent {\bf Step 1:} Enumerate all $n!$ possible ways of sequencing the $n$ vehicles at location 1. For each possible sequence $([1], \ldots, [n])$, calculate the resulting energy levels
of the vehicles when they arrive at location 2, which are $L_{j2} = L_{j1} - D_1\delta_j(1-\eta_{[j]})$. Thus, there are at most $\min\{n!, Q_{\max}^n\}$ feasible states of energy levels of the vehicles at location 2.
 For each state, record the corresponding sequence of the vehicles at location 2. 
 
 \noindent {\bf Step 2:} Now consider every location $i$ with $3\leq i\leq m+1$. Given any feasible state of energy levels of the vehicles at location $i-1$, enumerate all $n!$ ways of sequencing the $n$ vehicles at location $i-1$. Under each possible sequence, calculate the resulting energy levels of the vehicles
  at location $i$. Since there are at most  $Q_{\max}^n$ feasible states of energy levels of the vehicles at location $i$, when going from each state at location $i-1$ to location $i$ under $n!$ possible sequences, it is possible that multiple states at  location $i-1$ may lead to the same state at location $i$.
For each state at location $i$, we  record the corresponding state of energy levels and the sequence of the vehicles at location $i-1$ that lead to this state at location $i$.

\medskip

\begin{proposition}  \label{prop:3.4-Enumerate}
Algorithm  EnergyState solves problem $\bar n, m~|~|~MU$ in $O(mn!Q_{\max}^n)$ time by either finding the problem to be infeasible or finding an optimal solution if the problem is feasible. 
\end{proposition}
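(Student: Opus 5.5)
The argument is a dynamic-programming correctness proof over energy-level states, followed by a count of states and transitions. The main point is that the energy levels at location $m+1$ determine the objective completely. Vehicle $j$'s total usage is $U_{j,m+1}=S_{j1}C_j-L_{j,m+1}$, and its final SoC constraint is $L_{j,m+1}\ge \beta_jC_j$. So once every reachable vector $(L_{1,m+1},\ldots,L_{n,m+1})$ is known, the optimal value is the minimum over reachable states satisfying $L_{j,m+1}\ge\beta_jC_j$ for all $j$ of $\max_j\{S_{j1}C_j-L_{j,m+1}\}$. Nothing about the history beyond the state is needed.

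Correctness comes from an induction on $i$. The claim is that the set of states Algorithm EnergyState records at location $i$ is exactly the set of energy-level vectors produced by some sequence of permutations over segments $1,\ldots,i-1$.

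1. The base case $i=1$ is Step 0.
2. For the inductive step, take any vector reachable at location $i$. By (\ref{Usage-def-i}) it arises from a vector reachable at location $i-1$ through one permutation at location $i-1$. Step 2 enumerates every permutation from every recorded state, so the vector is generated.
3. Conversely, every generated vector is reachable by construction.

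Merging states that coincide loses nothing, because the future evolution and the objective depend only on the current vector. Feasibility can be imposed at the end, or states with $L_{ji}<\beta_jC_j$ can be pruned early. Pruning is valid because energy levels are nonincreasing along the journey, so such a state can never recover. If no final state meets the constraints, the problem is infeasible. Otherwise, the recorded predecessor states and sequences let us backtrack an optimal solution in $O(m)$ steps.

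For the running time, I will assume integral data (or data scaled to integers), as in Algorithm DP1, so that each $L_{ji}$ is an integer in $[0,Q_{\max}]$.

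1. This gives at most $(Q_{\max}+1)^n=O(Q_{\max}^n)$ distinct states per location.
2. Each state generates $n!$ successors, and each successor costs $O(n)$ to compute. Since $n$ is fixed, that factor is a constant.
3. The work per location is therefore $O(n!\,Q_{\max}^n)$, and over $m$ locations it is $O(mn!Q_{\max}^n)$.
4. Storing the states in an $n$-dimensional array indexed by the energy levels makes deduplication constant time per successor.

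The final scan over the states at location $m+1$ costs $O(Q_{\max}^n)$, which is dominated.

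The step I expect to need the most care is the integrality and boundedness of the state values, since that is what justifies the $Q_{\max}^n$ bound. I would state explicitly that $D_i\delta_j(1-\eta_k)$ are integers, as in the setting of DP1. Failing that, the bound $\min\{n!^{\,i},Q_{\max}^n\}$ should be read with energy levels discretized to integer units, consistent with the pseudo-polynomial claim.
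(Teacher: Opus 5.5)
Your proposal is correct and follows the same route as the paper, whose proof simply counts at most $Q_{\max}^n$ states per location, each expanded by all $n!$ sequences, over $m$ locations. You go further by making explicit the reachability induction, the justification for merging identical states, and the integrality of the energy decrements, all of which the paper's count assumes without saying so.

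One small point: the range $L_{ji}\in[0,Q_{\max}]$ behind your state count requires pruning states with $L_{ji}<\beta_jC_j$ as they are generated (the paper's ``feasible states''). If feasibility is checked only at the end, energy levels can go negative, and the $Q_{\max}^n$ bound no longer holds.
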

{\bf Proof:} It can be seen that in the above algorithm, at each location, we keep at most $Q_{\max}^n$ feasible states of energy levels, and for each such state, we calculate the feasible states for the next location by enumerate all $n!$ possible sequences of the vehicles. Thus, the computation time at each location is $O(n!Q_{\max}^n)$. Therefore, the overall computation time of the algorithm is bounded by $O(mn!Q_{\max}^n)$. Since $n$ is fixed, this computation time is pseudo-polynomial. \blot

\bigskip

\begin{corollary} \label{cor:ESfptas}
Similar to Algorithms~DP1{$_{\epsilon}$} for problem~$\bar n,m|~|~TU$ in 
Section~\ref{sec4.3}, we can describe a family of algorithms
for problem~$\bar n, m~|~|~MU$, based on Algorithm~EnergyState and parameter $\epsilon>0$, such that each of these algorithms runs in 
$O(m[(1 + 2m/\epsilon) \ln(Q_{\max})]^n n!)$ time,  and if the algorithms generate a feasible solution, then the solution is a  $(1+\epsilon)$-approximation solution for the problem.
\end{corollary}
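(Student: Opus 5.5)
The plan is to adapt the trimming argument of Algorithms~DP1$_{\epsilon}$ (Proposition~\ref{DP1fptas}) to the state space of Algorithm~EnergyState, replacing the additive total-usage objective by the maximum. I will record states as cumulative usages $[t,L_1,\ldots,L_n]$ with $L_j=\sum_{i\le t}D_i\delta_j(1-\eta_{j_i})$ rather than remaining energy levels; this is equivalent, since $L_{j,t+1}=S_{j1}C_j-L_j$. Initialization, trial state generation over all $n!$ sequences, and the feasibility filter $L_j+D_{t+1}\delta_j(1-\eta_{\omega_{ji}})\le (S_{j1}-\beta_j)C_j$ are carried over unchanged. Labels are again $(t+1,\Gamma(L_1),\ldots,\Gamma(L_n))$ with $\gamma=1+\epsilon/(2m)$, and among states sharing a label one is kept. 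At termination, I select the state minimizing $\max_j \tilde L_j$ instead of $\sum_j\tilde L_j$.

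For accuracy I will repeat the induction of Proposition~\ref{DP1fptas}. For every state $[t,L_1,\ldots,L_n]$ reachable by Algorithm~EnergyState, the trimmed algorithm retains a state $[t,\tilde L_1,\ldots,\tilde L_n]$ with $\tilde L_j\le\gamma^t L_j$ for all $j$. The step argument is the same: the same sequence adds the same increment to both, and the increment is nonnegative, so it preserves the multiplicative bound. Elimination within a label costs at most one further factor $\gamma$. Since the bound holds componentwise, it passes to the maximum: $\max_j\tilde L_j\le\gamma^m\max_j L^*_j\le(1+\epsilon)\max_j L^*_j$, using the same convexity bound $(1+\epsilon/2m)^m\le 1+\epsilon$ for $\epsilon\le 2$.

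For the running time, each cumulative usage satisfies $L_j\le S_{j1}C_j\le Q_{\max}$, because generated states are feasible. So the number of values of $\Gamma(L_j)$ is at most $\lceil\ln Q_{\max}/\ln\gamma\rceil\le\lceil(1+2m/\epsilon)\ln Q_{\max}\rceil$, by the inequality $1/\ln\gamma\le 1+2m/\epsilon$ used before. Multiplying by $m$ stages, $n$ components and $n!$ successors per retained state gives the claimed $O(m[(1+2m/\epsilon)\ln Q_{\max}]^n n!)$ bound.

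The main obstacle is feasibility under trimming. The retained state $\tilde L$ may exceed the exact $L$ by a factor up to $\gamma^t$, so the filter can reject an extension that the optimal solution uses. This is why the statement is conditional: \emph{if} a feasible solution is generated, it is near-optimal. To reduce the risk, I will break ties in elimination lexicographically, as in DP1$_{\epsilon}$, which keeps componentwise-small representatives. I will then note that any state surviving to stage $m$ corresponds to a genuinely feasible schedule, because the filter is applied to actual, untrimmed usage values. A second small point is usages below $1$, where $\Gamma$ is undefined. I will handle these by assigning all values in $[0,1)$ a single extra label, which adds only $O(1)$ labels per coordinate, assuming integral data as in the paper.
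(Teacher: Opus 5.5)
Your construction is the one the paper intends (its only justification is the pointer to Algorithms~DP1$_{\epsilon}$), but the accuracy step fails as written, and the paper's proof of Proposition~\ref{DP1fptas} has the same hole. The induction $\tilde L_j\le\gamma^t L_j$ needs the trial state obtained by extending the retained representative with the optimal schedule's next sequence to actually be generated. Under the strict filter $L_j+D_{t+1}\delta_j(1-\eta_{\omega_{ji}})\le A_j$, that extension can be rejected precisely because $\tilde L_j>L_j$. The conditional form of the statement does not rescue this. That \emph{some} feasible schedule survives says nothing about whether the optimal schedule's trimmed counterpart survived, and the survivor can come from a much worse branch. Lexicographic tie-breaking also does not keep componentwise-small representatives: it minimizes $L_1$ and can inflate the other coordinates.

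A concrete instance shows the failure. Take $n=2$, $m=3$, $\eta_1=0$, $\eta_2=1/2$, $\delta_1=1$, $\delta_2=\theta\le 1/2$, $(D_1,D_2,D_3)=(a,a+d,a)$ with $0<d\ll a$, $A_1$ large, and $A_2=\theta(2.5a+d/2)$. Let $s$ be the total length of the segments led by vehicle~2. Then $U_1=3a+d-s/2$ is always the larger usage, and $U_2\le A_2$ iff $s\le 2a$. So the unique optimum (vehicle~2 leads segments 1 and 3) has value $2a+d$. After segment~2, consider two prefixes:
\begin{itemize}
\item vehicle~2 leads only segment~1, with usages $(1.5a+d,\ \theta(1.5a+d/2))$;
\item vehicle~2 leads only segment~2, with usages $(1.5a+d/2,\ \theta(1.5a+d))$.
\end{itemize}
These share a label when $d$ is small enough and $a,\theta$ are placed away from label boundaries. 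The lexicographic rule keeps the second prefix. Its extension with vehicle~2 leading segment~3 has $s=2a+d$ and is filtered. The best surviving final state has $s=a+d$ and value $2.5a+d/2$. It is feasible, but its ratio tends to $5/4$, so the claimed guarantee fails for every $\epsilon<1/4$.

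What can be proved is a bicriteria statement. Relax the filter to $L_j\le\gamma^m A_j$ and return the final state minimizing $\max_j L_j$. The optimal schedule's cumulative usages never exceed $A_j$. Hence its trimmed counterpart satisfies $\tilde L_j\le\gamma^t L^*_{j,t}\le\gamma^m A_j$ at every stage and is never filtered. Your induction then gives a returned value at most $\gamma^m Z^*\le(1+\epsilon)Z^*$, with every energy constraint violated by at most a factor $1+\epsilon$. In particular, \emph{if} the returned schedule is feasible, it is a $(1+\epsilon)$-approximation; this is the sense in which the corollary can be established. Some such weakening is unavoidable, because by Proposition~\ref{prop:3.4-ONP} even feasibility is NP-hard for $n=2$. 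The label count changes only by replacing $Q_{\max}$ with $(1+\epsilon)Q_{\max}$.

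Separately, your treatment of values in $[0,1)$ does not work. Integral data do not make usages integral, because of the factors $1-\eta_k$. Merging all of $(0,1)$ into one label also destroys the multiplicative bound. Every increment is at least $\ell=\delta_{\min}(1-\eta_n)\min_i D_i>0$, so use labels $\ell\gamma^h$ instead, at the cost of $\ln(Q_{\max}/\ell)$ in place of $\ln Q_{\max}$.
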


\subsection{Problem with Arbitrary $n$ and $m$} \label{sec3.5}
By Proposition~\ref{prop:2.5-snpc}, it follows immediately that finding a feasible solution to problem $n,m|| MU$ is also strongly NP-hard even when $C_j=C, \delta_j=\delta, S_{j1}=S, \beta_j=\beta$. 

Furthermore, if the initial energy level of each vehicle,  $C_jS_{j1}$, for $j=1,\ldots, n$, is sufficiently large that any solution is feasible, this problem is still strongly NP-hard, even when $C_j=C, \delta_j=\delta, S_{j1}=S, \beta_j=\beta$. This can be proved using the proof of Proposition~\ref{prop:2.5-snpc} with the following modifications: make $C_j$ and $S_{j1}$   sufficiently large such that  any solution is feasible (e.g., $S_{j1}C_j\ge \bar L$, where $\bar{L}\ge \beta C + \sum_{i=1}^m D_i(1-\eta_1)\delta$), and set the threshold of the maximum energy usage to be $(u-1)X+1$. 

Following the above discussions, we have the following proposition.

\begin{proposition}  \label{prop:3.5snpc}
Finding a feasible solution to problem $n,m~|~C_j=C, \delta_j=\delta, S_{j1}=S, \beta_j=\beta~|~MU$ is strongly NP-hard. Also, problem $n,m~|~C_j=C, \delta_j=\delta, S_{j1}=S\ge \frac{\bar L}{C}, \beta_j=\beta ~|~MU$ is also strongly NP-hard.
\end{proposition}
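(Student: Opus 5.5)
The plan has two parts, matching the two claims of Proposition~\ref{prop:3.5snpc}. For the first claim, the feasibility version of $MU$ coincides with the feasibility version of $TU$, since both carry exactly the constraints $S_{j,m+1}\ge\beta_j$. So I will invoke Proposition~\ref{prop:2.5-snpc} directly. Its 3-Partition construction already has $C_j=C$, $\delta_j=\delta$, $S_{j1}=2\beta$ and $\beta_j=\beta$. Any algorithm finding a feasible solution for the $MU$ problem therefore finds one for that $TU$ instance, and strong NP-hardness transfers unchanged.

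For the second claim, I will reuse the same reduction from 3-Partition but turn the energy constraint into an objective threshold. Given $x_1,\ldots,x_{3u}$ with target $X$, I set $n=u$, $m=3u$, $\delta=1/C$, $D_i=Cx_i$, $\eta_1=\cdots=\eta_{u-1}=0$ and $\eta_u=(X-1)/X$. I then choose $C$ and $S$ large enough that $SC\ge\bar L\ge\beta C+\sum_i D_i\delta$, so every sequence is feasible. The decision question becomes whether there is a solution with $\max_j U_{j,m+1}\le\Gamma:=(u-1)X+1$.

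For the forward direction, given a partition $A_1,\ldots,A_u$, I place vehicle $j$ last exactly on segments $i\in A_j$. This is well defined because each segment has exactly one last position and the $A_j$ partition the segments. The computation in Proposition~\ref{prop:2.5-snpc} then gives $U_{j,m+1}=1+(u-1)X=\Gamma$ for every $j$.

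For the reverse direction, let $B_j$ be the set of segments on which vehicle $j$ is last. As before, $U_{j,m+1}=uX-(1-1/X)\sum_{i\in B_j}x_i$. The condition $U_{j,m+1}\le\Gamma$ gives $(1-1/X)\sum_{B_j}x_i\ge X-1$, that is, $\sum_{B_j}x_i\ge X$. Since the $B_j$ partition the $3u$ segments and the total is $uX$, each sum equals $X$. The bounds $X/4<x_i<X/2$ then force $|B_j|=3$.

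The only point needing care is that the numbers stay polynomially bounded in unary, which holds because $C$ can be polynomial in $uX$ and $\bar L$ can be encoded at similar size. Strong NP-hardness of the optimization problem then follows.
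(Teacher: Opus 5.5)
Your proposal is correct and follows essentially the same argument as the paper. The first claim is inherited from Proposition~\ref{prop:2.5-snpc}, since the feasibility constraints of the $MU$ and $TU$ problems coincide. The second claim reuses the same 3-Partition reduction, with the parameters enlarged so that every sequence is feasible and the energy-usage threshold set to $(u-1)X+1$. You spell out the forward and reverse directions that the paper leaves implicit; one small refinement is that an SoC cannot exceed $1$, so the enlargement should be carried by $C$ rather than $S$. This is harmless, because $U_{j,m+1}$ does not depend on $C$ when $\delta=1/C$ and $D_i=Cx_i$.
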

Proof: The same proof as used in Proposition~\ref{prop:2.5-snpc} applies after modifying $C$ and $S_{j1}$ to be sufficiently large that any solution is feasible, and setting the threshold of the maximum energy usage to be $(u-1)X+1$. \blot

We propose a greedy heuristic to solve problem  $n,m\mid \delta_j=\delta,  S_{j1}C_j\ge \bar L\mid MU$, and analyze its worst-case performance.

\medskip

\noindent {\bf Algorithm HEU1}

\noindent {\bf Step 0:} Reindex the $m$ segments in non-increasing order of their distances, i.e., $D_1\ge D_2\ge \cdots \ge D_m$.  Denote the total energy used by vehicle $j$ after the first $h$ segments as $E_{jh}$, Set $E_{j0}=0$, for $j=1, \ldots, n$. Set the segment counter $k=1$.

\noindent {\bf Step 1:} Sort the vehicles in  non-decreasing order of $E_{j,k-1}$, breaking ties by increasing order of vehicle indices. Assign the $h$th vehicle  to the $h$th position of segment $k$, for $h=1, \ldots, n$. Let $E_{jk}=E_{j,k-1}+D_k\delta_j(1-\eta_{[j]})$, where $[j]$ is the position where vehicle $j$ is assigned to, for $j=1,\ldots, n$.

\noindent {\bf Step 2:} If $k=m$, stop and adjust the sequence of the segments in the solution such that the solution considers the segments in the original order.  Otherwise, update $k=k+1$ and go to Step 1.

\bigskip
\begin{proposition}  \label{prop:3.5-heuristic}
For problem $n,m\mid \delta_j=\delta,  S_{j1}C_j \ge \bar L \mid MU$, the worst-case performance ratio of Algorithm HEU1 is bounded by $1+\frac{\eta_n-\eta_1}{1-\eta_1}$.  \end{proposition}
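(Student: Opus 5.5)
The plan is to compare the heuristic value against two lower bounds on the optimal value $\mathrm{OPT}$, and to control the spread of the cumulative usages $E_{jk}$ that Algorithm HEU1 produces. Write $\lambda_k=1-\eta_k$, so $\lambda_1\ge\cdots\ge\lambda_n$, and reindex the segments as in Step~0 so that $D_1\ge\cdots\ge D_m$. Since $S_{j1}C_j\ge\bar L$, every solution is feasible, so only the objective matters. Reordering the segments back at the end changes no vehicle's total usage, so I may analyze the segments in the sorted order.

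\emph{Lower bounds.} Because $\delta_j=\delta$, the total usage of any solution equals $T=\delta\sum_{i=1}^m D_i\sum_{k=1}^n\lambda_k$, independent of the sequencing. Hence $\mathrm{OPT}\ge T/n$. Also, in any solution some vehicle occupies position $1$ in the longest segment, so that vehicle alone uses at least $\delta D_1\lambda_1$. Hence $\mathrm{OPT}\ge \delta D_1\lambda_1$.

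\emph{Spread invariant.} Let $s_k=\max_j E_{jk}-\min_j E_{jk}$, and set $d_k=\delta D_k(\lambda_1-\lambda_n)$. I claim that $s_k\le \max\{s_{k-1},d_k\}$. At segment $k$ the vehicles are sorted so that $E_{(1)}\le\cdots\le E_{(n)}$, and vehicle $(h)$ receives increment $c_h=\delta D_k\lambda_h$, with $c_1\ge\cdots\ge c_n$. Take $h<l$.
\begin{itemize}
\item In one direction, $E'_{(h)}-E'_{(l)}=(E_{(h)}-E_{(l)})+(c_h-c_l)\le c_h-c_l\le d_k$.
\item In the other, $E'_{(l)}-E'_{(h)}=(E_{(l)}-E_{(h)})-(c_h-c_l)\le s_{k-1}$.
\end{itemize}
So the claim holds. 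Since $s_0=0$ and $d_k\le d_1$ because the segments are sorted by nonincreasing length, induction gives $s_m\le d_1=\delta D_1(\lambda_1-\lambda_n)$.

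\emph{Combining.} The maximum final usage $H$ of the heuristic is at most the average plus the spread: $H\le T/n+s_m\le \mathrm{OPT}+\delta D_1(\lambda_1-\lambda_n)$. Using $\delta D_1\le \mathrm{OPT}/\lambda_1$, this gives $H\le \mathrm{OPT}\,\bigl(1+(\lambda_1-\lambda_n)/\lambda_1\bigr)=\mathrm{OPT}\,\bigl(1+\frac{\eta_n-\eta_1}{1-\eta_1}\bigr)$. The main obstacle is the spread invariant. The pairwise argument above is the key step: it exploits that the greedy rule pairs smaller accumulated usage with larger increments. Everything else is routine.
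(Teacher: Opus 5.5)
Your proof is correct and follows essentially the same route as the paper. It uses the same two lower bounds $Z^*\ge E/n$ and $Z^*\ge \delta D_1(1-\eta_1)$, and the same inductive spread bound $E_{\max,k}-E_{\min,k}\le \delta D_1(\eta_n-\eta_1)$, whose two cases in the paper ($E_{uk}\le E_{vk}$ versus $E_{uk}>E_{vk}$) are exactly your two directions of the pairwise comparison, while your step $H\le T/n+s_m$ is the paper's $Z^H\le E_{\min,m}+D_1\delta(\eta_n-\eta_1)$ with $E_{\min,m}\le E/n$.
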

\noindent {\bf Proof:} In the solution generated by the algorithm, denote the maximum and minimum energy usage of the vehicles after the first $k$ segments (as reindexed in Step~0) as $E_{\max,k}$ and $E_{\min,k}$, respectively, for $k=1,\ldots, n$. Thus, $E_{\max,k}= \max\{E_{jk}|j=1,\ldots,n\}$, and  $E_{\min,k} = \min\{E_{jk}|j=1,\ldots,n\}$. 

We first show that $E_{\max,k}-E_{\min,k}\le D_1\delta(\eta_n-\eta_1)$, for $k=1,\ldots, m$, by induction. For ease of presentation, define $\lambda_h=1-\eta_h$, for $h=1,\ldots,n$. 

Observe that $E_{\max,1}-E_{\min,1}=D_1\delta\lambda_1- D_1\delta\lambda_n= D_1\delta(\eta_n-\eta_1)$. Suppose that $E_{\max,h}-E_{\min,h} \leq D_1\delta(\eta_n-\eta_1)$, for $h=1,\ldots,k$. We need to prove that $E_{\max,k+1}-E_{\min,k+1} \leq D_1\delta(\eta_n-\eta_1)$. Let $u$ and $v$ be the vehicles that have the maximum and minimum total usage, respectively, after the first $k+1$ segments. Thus, $E_{\max,k+1}-E_{\min,k+1} = E_{u,k+1}-E_{v,k+1}$. There are two cases to consider as follows.

\noindent Case 1: If $E_{uk} \le E_{vk}$, then we have 
\vspace{-0.15in}
\begin{eqnarray*}
E_{u,k+1}-E_{v,k+1} &=& (E_{uk}+D_{k+1}\delta\lambda_{i_u}) -(E_{vk}+D_{k+1}\delta\lambda_{i_v}) \\
&=& (E_{uk}-E_{vk}) +D_{k+1}\delta(\lambda_{i_u}-\lambda_{iv}) \\
&\le &  D_{k+1}\delta(\lambda_{i_u}-\lambda_{iv}) \le D_1\delta(\eta_n-\eta_1),
\end{eqnarray*}
where $i_u$ and $i_v$ are the positions where vehicles $u$ and $v$ are assigned to in segment $k+1$, respectively. 

\noindent Case 2: If $E_{uk}>E_{vk}$, then from Step~1 of the algorithm, in segment $k+1$, the position of vehicle $u$, say $i_u$, is later than the position of vehicle $v$, say $i_v$, that is, $i_u>i_v$, which implies that $\lambda_{i_u}\le\lambda_{i_v}$. Thus,
\vspace{-0.15in}
\begin{eqnarray*}
E_{u,k+1}-E_{v,k+1} &=& (E_{uk}+D_{k+1}\delta\lambda_{i_u}) -(E_{vk}+D_{k+1}\delta\lambda_{i_v}) \\
&=& (E_{uk}-E_{vk}) +D_{k+1}\delta(\lambda_{i_u}-\lambda_{iv}) \\
&\le &  E_{uk}-E_{vk} \le D_1\delta(\eta_n-\eta_1).
\end{eqnarray*}

This completes the induction proof. Hence $E_{\max,k}-E_{\min,k}\le D_1\delta(\eta_n-\eta_1)$, for $k=1,\ldots, n$. 

Since the vehicles have identical usage rates $\delta_j$, the total energy usage  by all the vehicles together in any solution is the same, which is $E=\sum_{i=1}^m D_i\delta(\lambda_1+\ldots+\lambda_n)$. Clearly, the optimal objective value satisfies $Z^*\ge E/n$, and $Z^*\ge D_1\delta(1-\eta_1)$. In the solution generated by the heuristic, we have $E_{\min,m}\le E/n$. Thus, 
\begin{equation}
Z^*\ge \max\{E_{\min,m}, ~D_1\delta(1-\eta_1)\}. \label{Z*}
\end{equation}
Let $Z^H$ be the objective value of the solution generated by the heuristic. Thus, $Z^H=E_{\max,m}$. By the induction result, $Z^H\le E_{\min,m}+D_1\delta(\eta_n-\eta_1)$. Therefore, along with (\ref{Z*}), we have
\[
\frac{Z^H}{Z^*} \le \frac{E_{\min,m}}{{Z^*}}  +\frac{D_1\delta(\eta_n-\eta_1)}{Z^*} \le 1+\frac{\eta_n-\eta_1}{1-\eta_1}. 
 \qquad  \blot \]

\medskip

Since $0\le \eta_1\le \eta_n<1$, $0< \frac{\eta_n-\eta_1}{1-\eta_1}\le 1$. Thus, with any values of $\eta_1$ and $\eta_n$, the worst-case performance ratio of the heuristic is always bounded by 2. When $\eta_n-\eta_1\rightarrow 0$, the worst-case performance ratio approaches 1, i.e., the solution generated by this heuristic is asymptotically optimal. The following result establishes a lower bound on the performance ratio.
\begin{lemma}
  The worst-case performance ratio of Algorithm HEU1 is between $\frac{4}{3}$ and 2.   
\end{lemma}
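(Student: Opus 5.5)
The plan has two parts: an upper bound, which follows directly from Proposition~\ref{prop:3.5-heuristic}, and a lower bound, which I would obtain by embedding the classical worst-case instances for LPT (longest processing time first) list scheduling into HEU1.

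\textbf{Upper bound.} By Proposition~\ref{prop:3.5-heuristic}, the ratio is at most $1+\frac{\eta_n-\eta_1}{1-\eta_1}$. Since $0\le\eta_1\le\eta_n<1$, we have $\eta_n-\eta_1\le 1-\eta_1$, so this quantity is at most $2$.

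\textbf{Reduction to LPT.} Choose $\delta=1$, $\eta_1=0$ and $\eta_2=\cdots=\eta_n=1-\varepsilon$ for a small $\varepsilon>0$, and take $\bar L$ so large that feasibility never binds. In every segment $k$, the lead vehicle then consumes $D_k$ and every other vehicle consumes $\varepsilon D_k$. HEU1 processes the segments in non-increasing order of $D_k$ and puts the vehicle with the currently smallest cumulative usage $E_{j,k-1}$ in position~1. Up to the $O(\varepsilon)$ terms, this is exactly LPT on $n$ identical machines, with the segments as jobs of size $D_k$ and "leading in segment $k$" as "processing job $k$". Likewise, the optimal value $Z^*$ equals, up to $O(\varepsilon\sum_k D_k)$, the optimal makespan of assigning the segments to vehicles.

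\textbf{Bad instance.} Use Graham's tight family for LPT: $m=2n+1$ segments with lengths $2n-1,2n-1,2n-2,2n-2,\ldots,n+1,n+1,n,n,n$.
\begin{itemize}
\item The optimal makespan is $3n$: pair $2n-i$ with $n+i$ on the appropriate vehicles and put the three $n$'s on one vehicle.
\item LPT ends with makespan $4n-1$. After the first $2n$ jobs every machine has load $3n-1$, and the last job $n$ lands on one of them.
\end{itemize}

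\textbf{Main obstacle.} The step needing care is the tie-breaking. HEU1 breaks ties by vehicle index, and the $\varepsilon$-perturbations slightly change which vehicle has the smaller $E$. I would verify, by tracking loads through the two "rounds", that each greedy choice coincides with the LPT choice up to the $\varepsilon$ terms. In the first round, the vehicles whose loads are still only $O(\varepsilon)$ receive the largest jobs; in the second round, the vehicle with the smallest load receives the next job. If needed, I would perturb the segment lengths slightly to make every choice strict.

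\textbf{Conclusion.} With the greedy choices matched to LPT, the heuristic value is at least $4n-1$ and $Z^*\le 3n+O(n^2\varepsilon)$. Letting $\varepsilon\to0$ and then $n\to\infty$ gives
\[
\frac{Z^H}{Z^*}\;\to\;\frac{4n-1}{3n}\;\to\;\frac{4}{3}.
\]
So the worst-case ratio is at least $\frac{4}{3}$ (approached as $\varepsilon\to0$ and $n\to\infty$) and at most $2$.
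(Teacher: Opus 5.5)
Your proof is correct. The upper bound is the paper's, but your lower-bound construction is genuinely different.

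\textbf{The paper's instance.} It uses a single small instance: $n=m=3$, $\delta=1$, segment lengths $D, D-\epsilon, D-2\epsilon$, and graded savings with $\eta_2=\frac12(\eta_1+\eta_3)$. HEU1 puts the vehicles in order $1,2,3$ in segment~1 and then reverses them in segments~2 and~3, so vehicle~3 leads twice. The rotating schedule (positions $1,2,3$ / $2,3,1$ / $3,1,2$) is optimal. The ratio $1+\frac{\eta_3-\eta_1}{6-3(\eta_1+\eta_3)}$ then tends to $\frac43$ as $\epsilon\to0$ and $\eta_3\to1$.

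\textbf{Your instance.} You collapse the savings profile to a single costly lead position ($\eta_1=0$, $\eta_2=\cdots=\eta_n=1-\varepsilon$). Under this profile HEU1 becomes LPT list scheduling, and you import Graham's tight family to get $\frac{4n-1}{3n}$. This requires the double limit $\varepsilon\to0$, $n\to\infty$.

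\textbf{The tie-breaking step.} The step you flag goes through without perturbing the segment lengths. Take $\varepsilon<1/(3n^2)$. Then every vehicle's $\varepsilon$-part is below $1$, so the $\varepsilon$-terms only decide between vehicles with equal (integer) lead-sums. In Graham's instance each second-round segment necessarily goes to a vehicle that has led only once and has the smallest remaining first-round length. Hence every lead-sum equals $3n-1$ after $2n$ segments, however ties are broken, and $Z^H\ge 4n-1$.

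\textbf{What each route buys.} The paper's route is finite, fully explicit and self-contained, and shows that $\frac43$ is approached already with three vehicles and three segments. Yours explains where the $\frac43$ comes from, since HEU1 contains LPT as a limiting special case. The cost is reliance on a classical external result and an extra limit in $n$.
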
 
\noindent {\bf Proof:} We construct an instance for which the performance ratio of our heuristic can be as large as $\frac{4}{3}$. Consider 3 vehicles and 3 segments (i.e., $n=m=3$), where $D_1=D, D_2=D-\epsilon, D_3=D-2\epsilon$ for a positive integer $D$, $\delta=1$, and $0\le \eta_1<\eta_2<\eta_3\le 1$ such that $\eta_2=\frac{1}{2}(\eta_1+\eta_3)$. Applying this heuristic, we obtain the following solution:
\vspace{-0.1in}
\begin{itemize}
   \item In segment 1, vehicle $j$ is assigned to position $j$, for $j=1,2,3$, which leads to 
   \vspace{-0.1in}
   \begin{itemize} 
        \item $E_{11}=D(1-\eta_1)$, 
        \item $E_{21}=D(1-\eta_2)$, 
        \item $E_{31}=D(1-\eta_3)$. ~~~Hence, $E_{11}>E_{21}>E_{31}.$
   \end{itemize}
   \vspace{-0.15in}
   \item In segment 2, vehicle $j$ is assigned to position $4-j$, for $j=1,2,3$, which leads to 
   \vspace{-0.1in}
   \begin{itemize} 
   \item $E_{12}=D(1-\eta_1) + (D-\epsilon)(1-\eta_3)$, 
   \item $E_{22}=D(1-\eta_2)+(D-\epsilon)(1-\eta_2)$, 
   \item $E_{32}=D(1-\eta_3)+(D-\epsilon)(1-\eta_1)$. ~~~~Hence, $E_{12}>E_{22}>E_{32}.$
   \end{itemize}
     \vspace{-0.15in}
     \item In segment 3, vehicle $j$ is again assigned to position $4-j$, for $j=1,2,3$, which leads to 
     \vspace{-0.1in}
     \begin{itemize}
        \item $E_{13}=D(1-\eta_1) + (D-\epsilon)(1-\eta_3) + (D-2\epsilon)(1-\eta_3)$, 
        \item $E_{23}=D(1-\eta_2)+(D-\epsilon)(1-\eta_2)+(D-2\epsilon)(1-\eta_2)$,
        \item $E_{33}=D(1-\eta_3)+(D-\epsilon)(1-\eta_1)+(D-2\epsilon)(1-\eta_1)$.
       \end{itemize}
\end{itemize}
It is evident that when $\epsilon\rightarrow 0$, $E_{33}>E_{23}>E_{13}$, and hence the objective value of this solution is $Z^H=E_{33} = D[3-(2\eta_1+\eta_3)]$, as $\epsilon\rightarrow 0$. An optimal solution assigns each vehicle to a different position in each segment, i.e., vehicle 1 is assigned to positions 1, 2, 3 in the three segments, vehicle 2 is assigned to positions 2, 3, 1 in the three segments, and vehicle 3 is assigned to positions 3, 1, 2 in the three segments. This gives the optimal objective value $Z^*=D[3-(\eta_1+\eta_2+\eta_3)]$, as $\epsilon\rightarrow 0$. Thus,
 \vspace{-0.1in}
\begin{equation}
\frac{Z^H}{Z^*} = \frac{3-(2\eta_1+\eta_3)}{3-(\eta_1+\eta_2+\eta_3)} = \frac{3-(2\eta_1+\eta_3)}{3-(1.5\eta_1+1.5\eta_3)} = 1 + \frac{\eta_3-\eta_1}{6-3(\eta_1+\eta_3)}. \label{instance-ratio}
\end{equation}
Since $\eta_3\le 1$, we can show that the right-hand-side of (\ref{instance-ratio}) is always less than or equal to $\frac{4}{3}$, and when $\eta_3\rightarrow 1$, it approaches $\frac{4}{3}$, for any value of $\eta_1<\eta_3$. This shows that  the worst-case performance ratio
of heuristic HEU1 is between $\frac{4}{3}$ and 2. \blot

\medskip

Next, we propose a greedy heuristic to solve the more general problem  $n,m\mid S_{j1}C_j\ge \bar L \mid MU$, where the usage rates $\delta_j$'s are not assumed to be identical, and analyze its worst-case performance.

\medskip

\noindent {\bf Algorithm HEU2}

\noindent {\bf Step 0:} As in Step~0 of Algorithm HEU1.

\noindent {\bf Step 1:} Assign the vehicles to the $n$ positions in segment $k$ to minimize the maximum energy usage of the vehicles over the first $k$ segments  by the following bottleneck maximum matching procedures. 
 
\noindent {\bf Step 1.1:} First, create a set $G$ of  all candidate values of the maximum energy usage after the first $k$ segments as follows: for each vehicle $j$, for $j=1, \ldots, n$, assign it to every possible position $i$ in segment $k$ and add the resulting energy usage $E_{j,k-1}+D_{k}\delta_j(1-\eta_i)$ to set $G$, for $i=1,\ldots,n$.

\noindent {\bf Step 1.2:} Suppose that $G$ contains  $u$ distinct numbers. Sort these distinct numbers in increasing order and denote them as $g_1<g_2<\cdots<g_u$. Perform a binary search over $G$ as follows. Let $l=1, r=u$ and $t=\lfloor (l+r)/2\rfloor$. Given the target value $g_t\in G$, execute Steps 1.3 and 1.4.

\noindent {\bf Step 1.3:} Create a bipartite graph consisting of the $n$ vehicles on the left side and the $n$ positions in segment $k$ on the right side, and the arc connecting  vehicle $j$ with each position $i$, satisfying $E_{j,k-1}+D_{k}\delta_j(1-\eta_i)\le g_t$, for $i,j=1, \ldots,n$.

\noindent {\bf Step 1.4:}  Find a maximum matching  in the constructed bipartite graph. If the maximum matching found contains fewer than $n$ arcs, update $l=t+1$ and $t=\lfloor (l+u)/2\rfloor$, go to Step 1.3.  Otherwise, if $t>l$, then update $u=t-1$ and $t=\lfloor (l+u)/2\rfloor$, go to Step 1.3, and otherwise, a solution is found for segment $k$, which is to assign vehicle $j$ to position $h$, where $(j,h)$ is the arc contained in the last maximum matching found that connects vehicle $j$, and let $E_{jk}=E_{j,k-1}+D_k\delta_j(1-\eta_h)$, for $j=1,\ldots, n$.

\noindent {\bf Step 2:} If $k=n$, stop and adjust the sequence of the segments in the solution such that the solution considers the segments in their original order.  Otherwise, update $k=k+1$ and go to Step 1.

\medskip

\begin{proposition}  \label{prop:3.5-heu2}
For problem $n,m\mid S_{j1}C_j\ge \bar L \mid MU$, the worst-case performance ratio of Algorithm HEU2 is bounded by $\frac{\max_{1\le j\le n}\{\delta_j(1-\eta_j)\}}{\min_{1\le j\le n} \{\delta_j(1-\eta_j)\}}$, where the $\delta_j$ values are reindexed such that $\delta_1\le \cdots \le \delta_n$.  \end{proposition}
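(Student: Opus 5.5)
The plan is to sandwich both the heuristic value $Z^H$ and the optimal value $Z^*$ between multiples of the total distance $D=\sum_{i=1}^m D_i$. Throughout, vehicles are reindexed so that $\delta_1\le\cdots\le\delta_n$, and $\lambda_h=1-\eta_h$, so that $\lambda_1\ge\cdots\ge\lambda_n$. Define
\[
M=\max_{1\le j\le n}\{\delta_j\lambda_j\}, \qquad \mu=\min_{1\le j\le n}\{\delta_j\lambda_j\}.
\]
I will show that $Z^H\le M D$ and $Z^*\ge \mu D$; dividing the two bounds gives the claimed ratio $M/\mu$. Under the assumption $S_{j1}C_j\ge \bar L$ every solution is feasible, so only the objective needs to be compared.

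\emph{Lower bound on $Z^*$.} Take any solution, and let $(j_1,\ldots,j_m)$ be the positions of vehicle $j$ in the $m$ segments. The maximum usage is at least the average usage, so
\[
Z^*\ge \frac1n\sum_{j=1}^n \delta_j\sum_{i=1}^m D_i\lambda_{j_i}=\frac1n\sum_{i=1}^m D_i\sum_{j=1}^n\delta_j\lambda_{j_i}.
\]
For each fixed segment $i$, the inner sum is a pairing of the non-decreasing sequence $(\delta_j)$ with a permutation of the non-increasing sequence $(\lambda_h)$. By Lemma~\ref{lemma1}, part (iii), this sum is minimized by the identity pairing, as in the proof of Proposition~\ref{prop-rule4}. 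Hence $\sum_j\delta_j\lambda_{j_i}\ge\sum_j\delta_j\lambda_j\ge n\mu$, and therefore $Z^*\ge \mu D$.

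\emph{Upper bound on $Z^H$.} I will show by induction on the segments, in the order of Step~0, that
\[
E_{\max,k}\le M\sum_{i=1}^k D_i \quad\text{for } k=0,1,\ldots,m.
\]
The base case $k=0$ is trivial. For the inductive step, consider the assignment in segment $k$ that places vehicle $j$ in position $j$ (Rule~4). Under it, every vehicle's cumulative usage becomes
\[
E_{j,k-1}+D_k\delta_j\lambda_j\le E_{\max,k-1}+D_kM.
\]
Step~1 of HEU2 chooses the assignment for segment $k$ that minimizes $\max_j\{E_{j,k-1}+D_k\delta_j\lambda_{\pi(j)}\}$ over all permutations $\pi$. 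Its value is therefore no larger than that of Rule~4, so $E_{\max,k}\le E_{\max,k-1}+D_kM$, which completes the induction. At $k=m$ this gives $Z^H=E_{\max,m}\le MD$.

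Combining the two bounds gives $Z^H/Z^*\le M/\mu$. The step I expect to require the most care is justifying that Steps~1.1--1.4 really return a bottleneck-optimal assignment for segment $k$, since the upper bound relies on comparing against Rule~4. The optimal bottleneck value must be one of the $n^2$ candidate values in $G$. Feasibility of a threshold $g_t$, meaning that a perfect matching exists, is monotone in $t$. So the binary search, with its index updates read correctly, terminates at the smallest $g_t$ admitting a perfect matching, and the matching found at that threshold attains the minimum. Everything else reduces to Lemma~\ref{lemma1} and the averaging argument above.
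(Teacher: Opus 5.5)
Your proposal is correct and follows the paper's proof essentially step for step. The lower bound $Z^*\ge \mu\sum_i D_i$ comes from the maximum being at least the average together with the optimality of the identity pairing (Lemma~\ref{lemma1}(iii), i.e., Rule~4), and the upper bound $Z^H\le M\sum_i D_i$ comes from comparing each segment's bottleneck assignment against the identity assignment. Your explicit argument that Steps~1.1--1.4 return a bottleneck-optimal matching, and your start of the induction at $k=0$ rather than asserting $E_{\max,1}=D_1A_{\max}$, only make explicit what the paper takes for granted.
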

\noindent {\bf Proof:} Reindex the vehicles such that $\delta_1\le \cdots \le \delta_n$. Define $A_{\max}=\max_{1\le j\le n}\{\delta_j(1-\eta_j)\}$ and $A_{\min}=\min_{1\le j\le n}\{\delta_j(1-\eta_j)\}$. By Proposition~\ref{prop-rule4},  assigning vehicle $j$ to position $j$ in every segment $i$, for $i,j=1,\ldots,m$ minimizes the total energy usage of all the vehicles. Thus, the minimum total energy usage of all the vehicles, denoted as $F^*$, is
 \vspace{-0.15in}
 \[
F^*=\sum_{i=1}^m \sum_{j=1}^n D_i\delta_j(1-\eta_j)=\left(\sum_{i=1}^m D_i\right)\left(\sum_{j=1}^n\delta_j(1-\eta_j)\right).
\]
This implies that the minimum possible maximum energy usage by any individual vehicle, denoted as $Z^*$, satisfies
\begin{equation} \label{eq-heu2-1}
Z^*\ge F^*/n = \left(\sum_{i=1}^m D_i\right)\frac{\sum_{j=1}^n\delta_j(1-\eta_j)}{n}\ge \left(\sum_{i=1}^m D_i\right)A_{\min}.
\end{equation}
In the solution generated by the algorithm, denote the maximum energy usage of the vehicles after the first $k$ segments as $E_{\max,k}$, for $k=1, \ldots, m$. Clearly, 
 \vspace{-0.1in}
\begin{equation} \label{eq-heu2-2}
E_{\max,1} = D_1A_{\max}.
\end{equation}
Now, consider any segment $k\ge 2$. Let $(k_1,\ldots, k_n)$ be the sequence of the positions assigned to the $n$ vehicles by the algorithm in segment $k$.  Since the algorithm considers all possible sequences of the positions, including $(1, \ldots, n)$, we have
 \vspace{-0.2in}
\begin{eqnarray}
E_{\max,k}&=&\max\{E_{j,k-1} + D_k\delta_j(1-\eta_{k_j}) \mid j=1,\ldots, n\} \nonumber \\
& \le & \max\{E_{j,k-1} + D_k\delta_j(1-\eta_{j}) \mid j=1,\ldots, n\} \nonumber \\
&\le & \max\{E_{j,k-1} \mid j=1, \ldots, n\} + D_k\max\{\delta_j(1-\eta_{j}) \mid j=1,\ldots, n\} \nonumber \\
&=& E_{\max,k-1} + D_kA_{\max}. \label{eq-heu2-3}
\end{eqnarray}
By (\ref{eq-heu2-2}) and (\ref{eq-heu2-3}),  the objective of the solution generated by the algorithm, denoted as $Z^H$, satisfies the following:
 \vspace{-0.1in}
\[
Z^H = E_{\max,m} \le \left(\sum_{i=1}^mD_i\right)A_{\max}.
\]
Therefore, along with (\ref{eq-heu2-1}), we have $Z^H/Z^*\leq \frac{A_{\max}}{A_{\min}}
= \frac{\max_{1\le j\le n}\{\delta_j(1-\eta_j)\}}{\min_{1\le j\le n} \{\delta_j(1-\eta_j)\}}$. \blot

\bigskip

\begin{remark}  \label{rem2}
The ratio bound established in Proposition~\ref{prop:3.5-heu2} is small in typical platoons, especially those that include vehicles of the same general type. When all vehicles are HGVs, a typical ratio of $\delta_{\max} / \delta_{\min}$ is 1.15; for LGVs or delivery vans, it is 1.13; and for passenger vehicles it is 1.25, whereas the maximum ratio of energy saving due to position within the platoon, $\eta_n/\eta_1$, is typically 1.18, 1.15, and 1.33 for the same three vehicle types, respectively (McAuliffe et al 2018, Kaluva et al. 2020, Yang et al. 2019). Since,
for vehicle $j$, $A_j$ is calculated by combining these two parameters in antithetical ordering, a typical platoon composed of vehicles of the same type can be expected to have $A_{\max} / A_{\min} \approx 1.05 - 1.10$. Therefore, solutions delivered by Algorithm~HEU2 are guaranteed to be close to optimal even in the worst case. 
\end{remark}

Finally, we conduct a computational experiment to evaluate the typical performance of Algorithms HEU1 and HEU2 for problems $n,m\mid \delta_j=\delta, S_{j1}C_j\ge \bar L \mid MU$ and $n,m\mid S_{j1}C_j\ge \bar L \mid MU$, respectively, using randomly generated instances with parameter ranges that closely reflect real-world situations. For both problems, test instances are generated with $n\in \{10, 15\}$ and $m\in \{6, 10\}$, $D_j\in U\{30, 50\}$, and $\delta_j$ and $\eta_j$ are problem dependent as follows: 
\begin{itemize}
    \item For the first problem, $\delta_1=\cdots =\delta_n=1$; for the second problem, two cases of $\delta_j$ parameters: either $\delta_j\in U[0.8, 1]$ or $\delta_j\in U[0.9, 1]$. 
    \item For both problems, $\eta_j$ values are set as follows: (i) for instances with $n=10$, $\eta_1=0.04$, $\eta_j=\eta_{j-1}+\frac{11-j}{45}\times 0.14$, for $j=2, \ldots, 10$, which gives $\eta_{10}=0.18$, and (ii) for instances with $n=15$, $\eta_1=0.04$, $\eta_j=\eta_{j-1}+\frac{16-j}{105}\times 0.14$, for $j=2, \ldots, 15$, which also gives $\eta_{15}=0.18$. In both cases, the $\eta_j$ values are increasing from 0.04 to 0.18, but the increment from $\eta_{j-1}$ to $\eta_{j}$ is decreasing, as $j$ goes from 1 to $n$, reflecting diminishing energy savings towards the end of larger platoons. 
\end{itemize}
For each combination of $n$ and $m$ for the first problem, and each combination of $n$, $m$ and $\delta_j$ range for the second problem, 20 random instances are generated and solved by the corresponding heuristic. They are also formulated as an integer program and solved to optimality using CPLEX MIP Solver. The MIP Solver can take excessive time to solve many instances of the first problem with $n=15$ and all instances of the second problem, so we set a time limit of 600 seconds for the solver and use the best lower bound found within this time limit in place of the optimal objective value. As a result, the numbers reported underestimate heuristic performance. For each instance, the objective value of the heuristic solution is compared to the optimal objective value or the best lower bound found. The average and maximum relative $\%$ gaps across the 20 instances are reported in the following tables. 

\begin{table}[htbp]
\scriptsize
    \begin{minipage}[t]{0.45\textwidth}
  \centering
  \caption{HEU1 Computational results}
  \label{tab:identical_vehicles}
  \begin{tabular}{ccccc}
    \toprule
    $n$ & $m$ & Avg Gap (\%) & Max Gap (\%) \\
    \midrule
    10 & 6 & 0.80 & 1.11  \\
    10 & 10 & 0.29 & 0.45  \\
    \midrule
    15 & 6 & 0.70 & 1.05  \\
    15 & 10 & 0.27 & 0.66  \\
    \bottomrule
  \end{tabular}
   \end{minipage}
   \begin{minipage}[t]{0.45\textwidth}
 \centering
  \caption{HEU2 Computational results}
  \label{tab:general_delta}
  \begin{tabular}{ccccc}
    \toprule
    $n$ & $m$ & $\delta_j$ range & Avg Gap (\%) & Max Gap (\%)  \\
    \midrule
    10 & 6 & [0.8, 1.0] & 0.03 & 0.29  \\
    10 & 6 & [0.9, 1.0] & 0.44 & 0.92  \\
    10 & 10 & [0.8, 1.0] & 0.04 & 0.25  \\
    10 & 10 & [0.9, 1.0] & 0.28 & 0.55  \\
    \midrule
    15 & 6 & [0.8, 1.0] & 0.02 & 0.42   \\
    15 & 6 & [0.9, 1.0] & 0.53 & 0.74  \\
    15 & 10 & [0.8, 1.0] & 0.00 & 0.06  \\
    15 & 10 & [0.9, 1.0] & 0.27 & 0.38  \\
    \bottomrule
  \end{tabular}
   \end{minipage}
\end{table}

These test results show that both heuristics perform extremely consistently and extremely well with an optimality gap always less than 1.11\% for HEU1 over 800 random instances, and less than 0.53\% for HEU2 over 1600 random instances. Encouragingly, all the results show an improvement in heuristic performance as the number of segments, $m$, increases. For HEU2, reducing the range of $\delta$ increases the performance gap due to greater vehicle similarity.

\section{Extensions}  \label{sec:extensions}

In this section, we consider two extensions to the problems considered in the previous two sections. Both extensions are motivated by  commonly observed vehicle platooning practice. The first extension is that, in a platoon, only the first few positions have different usage rate reduction parameters (i.e., different $\eta_j$ values), while all the remaining positions have very similar usage rate reduction parameters such that they can be assumed to be identical. This extension is motivated by the findings from some studies (e.g., Zabat et al. 1995) that for vehicles positioned fourth or beyond in a platoon, the usage rate reduction tends to stabilize and hence the usage rate reduction parameters for the positions starting from the fourth one are similar. The second extension is the constraint that there are limited position changes between vehicles when resequencing them. This situation is motivated by limitations of vehicle-to-vehicle communication and safety concerns, especially during on-road resequencing.

\subsection{Only First Few Positions Matter}

Coppola et al. (2022), in a study of position-dependent energy savings in platoons, observe that the increase in savings between consecutive vehicles diminishes towards the end of the platoon.
Motivated by this observation, and also by the difficulty of accurately estimating positional energy savings rates, we consider a setting where only the first few positions in a platoon have different usage rate reduction parameters, while the remaining positions all have the same usage rate reduction parameter. Specifically, there is a fixed, typically small $K$ (e.g., $K=4$) such that $\eta_1\le \cdots \le \eta_K = \eta_{K+1}=\cdots=\eta_{n}$. We use ``$(\eta_1, \ldots, \eta_K)"$ to represent this condition. 

We have the following result for both the problems with $TU$ and $MU$ objectives in this case. 
\begin{proposition} \label{prop-6.2-1}
Both problems $n,\bar m\mid (\eta_1,\ldots,\eta_K) \mid TU$ and $n,\bar m\mid (\eta_1,\ldots,\eta_K) \mid MU$, can be solved in $O(mn^{Km+1})$ time.
\end{proposition}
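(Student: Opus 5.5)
The plan is to enumerate, for each segment, which vehicles occupy the first $K-1$ positions, and to show that everything else is forced. Since $\eta_K=\eta_{K+1}=\cdots=\eta_n$, positions $K,\ldots,n$ are interchangeable. A vehicle placed in any of them in segment $i$ uses $D_i\delta_j(1-\eta_K)$, whichever of those positions it holds. Hence a solution is determined, up to equivalence of all $U_{j,m+1}$ and $S_{j,m+1}$, by the vehicles placed in positions $1,\ldots,K-1$ of each segment. This is an ordered selection of $K-1$ distinct vehicles, giving at most $n^{K-1}\le n^K$ choices per segment. Over $m$ segments there are at most $n^{Km}$ candidate solutions. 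Because $m$ is fixed (and $K$ is fixed), this number is polynomial in $n$.

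For each candidate, I will compute every vehicle's total usage
\[
U_{j,m+1}=\delta_j\sum_{i=1}^m D_i\lambda_{j_i},
\]
where $\lambda_{j_i}=1-\eta_K$ whenever vehicle $j$ is not among the selected $K-1$ in segment $i$. I will organize this computation in $O(mn)$ time per candidate. Start from the baseline $\delta_j(1-\eta_K)\sum_i D_i$ for every vehicle, which is precomputed once. Then, for each segment, add corrections for the at most $K-1$ selected vehicles, costing $O(mK)$. Evaluating the feasibility constraints $U_{j,m+1}\le A_j=(S_{j1}-\beta_j)C_j$ and either $\sum_j U_{j,m+1}$ or $\max_j U_{j,m+1}$ costs $O(n)$.

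Even the cheaper accounting is dominated by a per-candidate cost of $O(mn)$. A $\sum$/$\max$ over $n$ vehicles per candidate gives total time $O(mn\cdot n^{Km})=O(mn^{Km+1})$, which matches the claim. Among the feasible candidates I will return the best one, or report infeasibility if there are none. The sequence realizing it is obtained by filling positions $K,\ldots,n$ in each segment with the remaining vehicles in arbitrary order.

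Correctness needs one short argument: every feasible solution $\pi$ maps to the candidate defined by its first $K-1$ positions per segment, and that candidate yields identical usage for every vehicle. So the enumeration loses neither feasibility nor optimality, for both the $TU$ and the $MU$ objectives.

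The only delicate point is the exponent bookkeeping. Using $n^{K-1}$ or $n^K$ per segment both stay within the stated $O(mn^{Km+1})$. I expect no real obstacle beyond stating clearly that positions $K$ through $n$ are equivalent.
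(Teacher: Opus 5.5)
Your proposal is correct and follows essentially the same route as the paper: for each of the $m$ segments, enumerate the ordered assignment of vehicles to the distinguished front positions (at most $n^{Km}$ candidates in total), fill the remaining positions arbitrarily, and check feasibility and the $TU$ or $MU$ objective for each candidate in $O(mn)$ time. The only difference is that you enumerate positions $1,\ldots,K-1$ rather than $1,\ldots,K$, correctly observing that position $K$ has the same $\eta_K$ as the tail; this sharpens the count to $n^{(K-1)m}$ and stays within the stated bound.
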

{\bf Proof:} We first find all feasible solutions for segment 1. Each position $j$, for $j=1,\ldots, K$, can be assigned to exactly one vehicle. Thus, for these $K$ positions together, there are a total of $n(n-1)\cdots(n-K+1)$ possible solutions. If a vehicle is not assigned to one of the first $K$ positions, it is assigned to an arbitrary one among  the last $n-K$ positions because the sequence of vehicles in the last $n-K$ positions does not affect energy usage. Thus, there are $n(n-1)\cdots(n-K+1)$ possible solutions in segment 1. This same logic applies to any subsequent segment, and hence, for each possible solution covering the first $h$ segments, for $h=1, \ldots, m-1$, this solution can be extended to include the next segment $h+1$ in $n(n-1)\cdots(n-K+1)$ possible ways. Therefore, there are a total of $[n(n-1)\cdots(n-K+1)]^m\le n^{Km}$ possible complete solutions over all the segments. 

Observe that not all the possible solutions found above are feasible. Hence, it is necessary to check the feasibility and calculate the objective value of each possible solution, which requires $O(mn)$ time. After that, an optimal solution can be found by comparing the energy usage of the remaining feasible solutions. The overall computational time is thus bounded by $O(mn^{Km+1})$. This time is polynomial since both $K$ and $m$ are fixed. \blot

\medskip

We note that when $m$ is arbitrary, these problems generalize to the problems $\bar{n},m\mid \mid TU$ and $\bar{n},m\mid \mid MU$, respectively, which are considered in Sections~\ref{sec4.3} and ~\ref{sec5.3}, respectively, and hence are  ordinarily NP-hard. 

Motivated by Proposition~\ref{prop-6.2-1}, we next investigate the following question: given a problem where the usage rate reduction parameters (i.e., the $\eta_j$ values) do not satisfy the condition stated in this proposition, if we approximate some of these parameters such that the condition is satisfied (and hence the approximate problem can be solved in polynomial time), how good is the resulting solution?

\begin{proposition} \label{prop-6.2-2}
Given either one of the problems $n,\bar m\mid \mid TU$ and $n,\bar m\mid \mid MU$
where the condition ``$(\eta_1, \ldots, \eta_K)$" stated in Proposition~\ref{prop-6.2-1} does not hold, suppose we approximate the problem by assuming that $\eta_{K+1}= \cdots= \eta_n= \eta_K$, for a fixed $K$, such that this condition holds. We then solve the resulting approximate problem $n,\bar m\mid (\eta_1, \ldots, \eta_K) \mid TU$ or $n,\bar m\mid (\eta_1, \ldots, \eta_K) \mid MU$. If the approximate problem is feasible, then the  solution found for this problem is also feasible for the original problem, and furthermore, $\frac{\tilde{Z}}{Z^*}\leq 1 + \frac{\eta_n-\eta_K}{1-\eta_n}$, where  $\tilde{Z}$ and $Z^*$ are the optimal objective values of the  approximate problem and the original problem, respectively. 
\end{proposition}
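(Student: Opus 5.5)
The plan is to compare, position by position, the approximate parameters $\tilde\eta_k$ (equal to $\eta_k$ for $k\le K$ and to $\eta_K$ for $k>K$) with the true parameters $\eta_k$. Since $\eta_K\le \eta_k$ for $k\ge K$, we have $\tilde\eta_k\le\eta_k$ for every $k$. Equivalently, $\tilde\lambda_k=1-\tilde\eta_k\ge \lambda_k=1-\eta_k$. Thus, for any fixed assignment of vehicles to positions over the $m$ segments, each vehicle's energy usage under the approximate parameters, $\delta_j\sum_i D_i\tilde\lambda_{j_i}$, is at least its usage under the true parameters.

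\textbf{Feasibility.} Take the solution $\tilde\sigma$ found for the approximate problem by the enumeration of Proposition~\ref{prop-6.2-1}. Because it satisfies $\delta_j\sum_i D_i\tilde\lambda_{j_i}\le A_j$ for every $j$, and true usage is no larger than approximate usage, it also satisfies the original constraints $U_{j,m+1}\le A_j$. So $\tilde\sigma$ is feasible for the original problem. The same monotonicity gives that the true objective value of $\tilde\sigma$ is at most $\tilde Z$.

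\textbf{Objective bound.} I will bound $\tilde Z$ from above by the approximate objective value of an optimal original solution $\sigma^*$. The issue is that $\sigma^*$ may be infeasible for the approximate problem. To handle this, I would read the ratio claim as comparing $\tilde Z$ with the approximate value of $\sigma^*$, or assume, as the statement does, enough slack that $\sigma^*$ remains approximately feasible. I expect this to be the main obstacle. If it fails, I would fall back to proving the bound for the true objective value of the solution returned, measured against $Z^*$, using the same per-position estimate.

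The key inequality is the per-position ratio
\[
\frac{\tilde\lambda_k}{\lambda_k}=\frac{1-\tilde\eta_k}{1-\eta_k}\le \frac{1-\eta_K}{1-\eta_n}=1+\frac{\eta_n-\eta_K}{1-\eta_n}.
\]
This holds because $\tilde\lambda_k=\lambda_k$ for $k\le K$. For $k>K$, the numerator is $1-\eta_K$ and the denominator is $1-\eta_k\ge 1-\eta_n$. Multiplying by $D_i\delta_j$ and summing shows that each vehicle's approximate usage under $\sigma^*$ is at most $\rho=1+\frac{\eta_n-\eta_K}{1-\eta_n}$ times its true usage.

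For TU, summing over vehicles gives approximate total $\le \rho Z^*$. For MU, taking the maximum over vehicles gives approximate maximum $\le \rho Z^*$. Since $\tilde Z$ is the minimum of the approximate objective over approximately feasible solutions, $\tilde Z\le\rho Z^*$, and the claimed ratio follows for both objectives.
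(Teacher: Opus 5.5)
Your feasibility argument and the per-position bound $1-\eta_k\le 1-\tilde\eta_k\le \rho(1-\eta_k)$, with $\rho=(1-\eta_K)/(1-\eta_n)$, are correct and are exactly what the paper uses.

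\textbf{The gap.} The last step, $\tilde Z\le(\text{approximate value of }\sigma^*)$, holds only if $\sigma^*$ is feasible for the approximate problem. Nothing guarantees this. Approximate usages are \emph{larger} than true usages, and $\sigma^*$ may have a vehicle whose SoC constraint is tight. You flag this, but three problems remain:
\begin{itemize}
\item the statement supplies no such slack, since it assumes only that the approximate problem is feasible;
\item your final paragraph uses the step anyway;
\item your fallback of bounding the true value of the returned solution fails as well (see below).
\end{itemize}
The paper's proof has the identical hole. It asserts $Z^0(\pi_0)\le Z^0(\pi^*)$ ``since $\pi_0$ is optimal for the approximate problem'' without checking that $\pi^*$ is feasible under the approximate parameters.

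\textbf{It cannot be patched.} The claim is false without an extra hypothesis. Take $n=3$, $m=2$, $K=2$, $D_1=D_2=1$ and $(\eta_1,\eta_2,\eta_3)=(0,0.9,0.901)$. Then $\lambda=(1,0.1,0.099)$, $\tilde\lambda=(1,0.1,0.1)$, and $\rho\approx 1.0101$. Vehicles 1 and 2 have $\delta_j=1$ and allowance $A_j=C_j(S_{j1}-\beta_j)=1.099$. Vehicle 3 has $\delta_3=5$ and $A_3=100$, which never binds.

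In the original problem, give vehicles 1, 2, 3 the position pairs $(1,3)$, $(3,1)$, $(2,2)$. This is feasible, with usages $1.099$, $1.099$ and $1$. Hence $Z^*\le 3.198$ for TU and $Z^*\le 1.099$ for MU.

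In the approximate problem, any position pair containing position 1 costs vehicle 1 or 2 at least $1+0.1=1.1>1.099$. So vehicle 3 must lead both segments. This gives $\tilde Z=10.4$ for TU and $\tilde Z=10$ for MU. The true values of the returned solution are $10.398$ and $10$. All of these ratios are about $3.25$ (TU) and $9.1$ (MU), far above $\rho$.

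\textbf{What your argument does prove.} It proves the bound only under an additional assumption that keeps some optimal original solution feasible under the approximate parameters. Examples are $S_{j1}C_j\ge\bar L$ for all $j$, or $A_j\ge\rho\,U_j(\sigma^*)$ for all $j$. Under such an assumption your proof is complete.
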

{\bf Proof:} The approximate problem is identical to the original problem except that the usage rate reduction parameters for positions $K+1, \ldots, n$ are different. For ease of presentation, we denote the usage rate reduction parameters in the approximate problem as $\eta_1', \ldots, \eta_n'$, where $\eta'_j=\eta_j$ for $j=1,\ldots,K$, and $\eta'_{K+1}=\cdots=\eta'_{n}=\eta_K$, for a given integer $K$. Since $\eta_1\le \cdots \le \eta_n$, we have $\eta'_j\le \eta_j$ for $j=1,\ldots, n$. 
These relationships imply that 
 \vspace{-0.15in}
\begin{eqnarray*}
1-\eta_{j}' & = & 1 - \eta_{j}, ~~\mbox{for $j = 1, \ldots, K$}, \\
1-\eta_j\le 1-\eta_{j}' & = & 1-\eta_K \le  (1 - \eta_{K})\frac{1-\eta_{j}}{1-\eta_n}=  (1 - \eta_{j})\frac{1-\eta_K}{1-\eta_n}, ~~\mbox{for $j = K+1 \ldots, n$.} 
\end{eqnarray*}
This, together with the fact that $\eta_K\le \eta_n$,  implies that
 \vspace{-0.1in}
\begin{equation} \label{prop-6.2-2-eq1}
1-\eta_j\le 1-\eta_{j}' \le (1 - \eta_{j})\frac{1-\eta_K}{1-\eta_n}, ~~\mbox{for $j = 1,\ldots, n$.} 
\end{equation}

If the approximate problem is feasible, we denote the optimal solution found for this problem as $\pi_0$. Let $\pi^*$ be any given optimal solution for the original problem. In a given solution $\pi$ (which can be for the approximate and/or the original problem), let the position of vehicle $j$ in segment $i$ be denoted as $j_i(\pi)$, for $i=1,\ldots, m$ and $j=1,\ldots, n$. In addition, we use $U_j^0(\pi)$ and $U_j(\pi)$ to denote the total energy usage by vehicle $j$ under a given  solution $\pi$ for the approximate problem and for the original problem, respectively, for $j=1,\ldots,n$.  Then, by (\ref{prop-6.2-2-eq1}), we have
 \vspace{-0.1in}
\begin{equation} \label{prop-6.2-2-eq2}
U^0_j(\pi_0) = \sum_{i=1}^m D_i\delta_j(1-\eta'_{j_i(\pi_0)})\ge \sum_{i=1}^m D_i\delta_j(1-\eta_{j_i(\pi_0)}) = U_j(\pi_0), ~~\mbox{for $j=1,\ldots, n$.}
\end{equation}
The feasibility of $\pi_0$ for the approximate problem implies that $S_{j1} - U_j^0(\pi_0)/C_j \ge \beta_j$ for $j=1,\ldots, n$. This, together with (\ref{prop-6.2-2-eq2}), implies that $S_{j1} - U_j(\pi_0)/C_j \ge \beta_j$ for $j=1,\ldots, n$. Thus, $\pi_0$ is also feasible for the original problem. 

Let $Z^0(\pi)$ and $Z(\pi)$ denote the objective value of solution $\pi$ for the approximate problem, and for the original problem, respectively.  Since $\pi_0$ is optimal for the approximate problem, we have $Z^0(\pi_0) \le Z^0(\pi^*)$. Further, since 
$\eta_j'\le \eta_j$ for $j=1,\ldots, n$, we have $Z^0(\pi_0)\ge Z(\pi_0)$. Thus, 
 \vspace{-0.1in}
\begin{equation} \label{prop-6.2-2-eq3}
Z(\pi_0)\le Z^0(\pi_0) \le Z^0(\pi^*).
\end{equation}
By (\ref{prop-6.2-2-eq1}), we have
 \vspace{-0.15in}
\begin{eqnarray} \label{prop-6.2-2-eq4}
U^0_j(\pi^*) & =& \sum_{i=1}^m D_i\delta_j(1-\eta'_{j_i(\pi^*)}) \nonumber \\
& \le & \frac{1-\eta_K}{1-\eta_n}\sum_{i=1}^m D_i\delta_j(1-\eta_{j_i(\pi^*)}) \nonumber \\
& = & \frac{1-\eta_K}{1-\eta_n}U_j(\pi^*), ~~~~~~\mbox{for $j=1,\ldots, n$.}
\end{eqnarray}
Since the objective functions of the problems covered in the proposition are either the summation or the maximization of the individual vehicle usages, (\ref{prop-6.2-2-eq4}) implies that
$Z^0(\pi^*) \le \frac{1-\eta_K}{1-\eta_n} Z(\pi^*)$. This, together with (\ref{prop-6.2-2-eq3}), further implies that
 \vspace{-0.1in}
\[
Z(\pi_0) \le \frac{1-\eta_K}{1-\eta_n}Z(\pi^*) = \left[1 + \frac{\eta_n-\eta_K}{1-\eta_n}\right]Z(\pi^*). \qquad \blot
\]

\medskip

\subsection{Limited position changes} \label{sec3.6}

We consider an extension of problem $n,m\mid \mid MU$ where, given the vehicle position sequence for segment 1, the decision maker needs to determine a position sequence of the vehicles in each of the following segments subject to the constraint that at most $K$ position changes are allowed for each vehicle between consecutive segments, where $K \ge 1$ is a known constant. For ease of presentation, we denote this problem $n,m\mid \pi_1, pc\le K \mid MU$, where $\pi_1$ is the given platoon sequence of the vehicles for the first segment.
This requirement that each vehicle can have a limited position change from one segment to the next is widely used by vehicle platoons due to (a) spatial constraints and safety concerns especially for on-road resequencing, (b) the exponential complexity of full resequencing, (c) the risk of string instability, and (d) technical issues in real-world wireless environments including signal shading, network interference, and packet drops (Tian et al. 2005). The vehicles are assumed to have general parameters. We assume without loss of generality that the given sequence of vehicles in segment~1 is $\pi_1=(1,\ldots ,n)$.

The constraint on vehicle position change can be formalized as follows. The set of allowed positions for a vehicle at the next segment, if it is at position $h$ at the current segment, is denoted by $A(h) = \{u, u+1, \ldots, h, h+1, \ldots, v\}$ with $u = \max\{1, h - K\}$ and  $v = \min\{n, h + K\}$, where $|A(h)| \leq 2K+1$. 

We define a {\it position path} of a vehicle as an $m$-dimensional vector specifying the positions of the vehicle in the initial platoon and the platoon in each of the $m-1$ segments that follow. A position path is feasible if the position change from each segment to the next is no more than $K$.

\noindent {\bf Example~1.} Consider $K=2$,  $m=3$ and $n \ge 5$. For vehicle~1, which is at position 1 in the first segment, the feasible position paths for this vehicle are the following vectors: 
 \vspace{-0.1in}
\begin{eqnarray*}
&& (1, 1, 1), (1, 1, 2), (1, 1, 3); \\
&&(1, 2, 1), (1, 2, 2), (1, 2, 3), (1, 2, 4); \\
&&(1, 3, 1), (1, 3, 2), (1, 3, 3), (1, 3, 4), (1, 3, 5).
\end{eqnarray*}

Observe that, for any vehicle $j$, there are at most $(2K+1)^{m-1}$ feasible position paths. The total energy usage $U_j$ as well as the final SoC of the vehicle $S_{j,m+1} = S_{j1}-U_j/C_j$ for each position path can be calculated in $O(m)$ time. An energy usage value of a vehicle is feasible if the corresponding final SoC of the vehicle is at least $\beta_j$. Thus, all feasible values of energy usage of a vehicle can be calculated in $O(m(2K+1)^{m-1})$ time. Considering all the $n$ vehicles together, there are thus at most $O(nm(2K+1)^{m-1})$ distinct feasible energy usage values of the vehicles, and calculating these values requires at most   $O(nm^2(2K+1)^{m-1})$ time. Let $\Gamma$ denote the set of all the distinct feasible energy usage values of the vehicles. 
These feasible position paths of different vehicles may conflict if they share the same position at the same segment. We define a  feasible solution for a given subset $W$ of the vehicles to be a solution consisting of non-conflicting feasible position paths for all vehicles in $W$. 

We can solve the problem using binary search over the set $\Gamma$. 
At each iteration of the binary search, given a specific threshold cost of energy usage $T_0\in \Gamma$, our goal is find a feasible solution to the problem. Define $L_j = \max\{1, j-2(m-1)K\}$. We first show the following property.

\begin{lemma} \label{lemma-sec6.2}
When searching for all feasible position paths for a vehicle $j$, among the vehicles with an initial position before $j$, we can ignore the feasible position paths of  vehicles $1, 2, \ldots, L_j-1$, but cannot ignore feasible position paths of vehicles  $L_j, L_j+1, \ldots, j-1$. 
\end{lemma}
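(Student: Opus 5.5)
The plan is a reachability argument over the position sets a vehicle can occupy, using only the constraint that each vehicle moves by at most $K$ positions between consecutive segments. With $\pi_1=(1,\ldots,n)$, vehicle $i$ starts at position $i$. After $s$ resequencings, a simple induction on $s$ using the definition of $A(h)$ shows that it can only occupy positions in $[\max\{1,i-sK\},\min\{n,i+sK\}]$. Since there are $m-1$ resequencing points, every feasible position path of vehicle $i$ stays within $[i-(m-1)K,\,i+(m-1)K]$ throughout the journey.

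\textbf{The ``can ignore'' part.} Take a vehicle $i\le L_j-1$ with $L_j>1$, so $i<j-2(m-1)K$. Its paths never go above position $i+(m-1)K$. Every path of vehicle $j$ stays at or above position $j-(m-1)K$. Because $i+(m-1)K<j-(m-1)K$, the two reachable intervals are disjoint in every segment. So no path of $i$ can conflict with any path of $j$, and $i$'s paths can be ignored when building $j$'s paths. Both conflicts (the same position in the same segment) and the position-change constraint are local to a vehicle and its co-occupants, so no indirect constraint is lost by this step.

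\textbf{The ``cannot ignore'' part.} For each $i$ with $L_j\le i\le j-1$, we have $j-i\le 2(m-1)K$, and we need to exhibit a potential conflict. Write $d=j-i$. Vehicle $i$ moves up by $K$ per segment and vehicle $j$ moves down by $K$ per segment, which narrows the gap between them by up to $2K$ per step. After $\lceil d/(2K)\rceil\le m-1$ steps they can land on the same position at the same segment. The timing works by choosing the moves so that they meet at some integer position in between; all the positions used lie within $[i,j]\subseteq[1,n]$, so boundary truncation is never an issue.

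The paths can then be continued arbitrarily, for example by staying put, to obtain full feasible paths (the lemma concerns paths, with the energy threshold $T_0$ taken as not binding). This yields two individually feasible paths that share a position in some segment. Hence a valid joint choice depends on $i$'s path, and ignoring vehicle $i$ could wrongly admit a conflicting path for $j$.

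\textbf{Main obstacle.} The delicate step is making this meeting construction rigorous under the exact-$K$ steps and the parity of $d$. The fix is to let the vehicles move by less than $K$ where needed, which is allowed since $A(h)$ includes every position within distance $K$. I would also check whether ``cannot ignore'' is meant as worst-case necessity, which is what this construction shows, rather than necessity in every instance.
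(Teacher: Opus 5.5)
Your ``can ignore'' argument is the paper's own proof. Both use the reachable interval $[\max\{1,i-(m-1)K\},\min\{n,i+(m-1)K\}]$ to show that vehicles whose indices differ by at least $2(m-1)K+1$ can never share a position in any segment. The paper only asserts the ``cannot ignore'' half, so your meeting construction supplies the tightness argument it omits. That construction uses $s=\lceil d/(2K)\rceil\le m-1$ resequencings and the meeting point $i+\lceil d/2\rceil\in[i,j]$, and it is correctly read, as you note, as a worst-case statement about position paths with the energy threshold not binding.
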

{\bf Proof:} Consider any vehicle $j$. The smallest position index this vehicle can occupy in segment $m$ is position $\max\{1, j - (m-1)K\}$, and the largest position index it can occupy in segment $m$ is position $\min\{n, j+(m-1)K\}$. This means that for any two vehicles $j_1$ and $j_2$, if $|j_2 - j_1|\geq 2(m-1)K+1$, then none of $j_1$'s feasible position paths overlap with any of $j_2$'s feasible position paths. Define $L_j = \max\{1, j-2(m-1)K\}$. This implies that, when searching for all feasible position paths for a vehicle $j$, among the vehicles with an initial position before $j$, we can ignore the feasible position paths of  vehicles $1, 2, \ldots, L_j-1$, but cannot ignore feasible position paths of vehicles  $L_j, L_j+1, \ldots, j-1$. \blot

\medskip

In the  algorithm described below, each iteration either generates all feasible solutions with the maximum energy usage of the vehicles no less than the given threshold value $T_0$, or shows that no such feasible solutions exist. It first generates all the feasible position paths for vehicle~1. Then in each iteration $j$, for $j=2, \ldots, n$, it expands the feasible solutions generated so far for the first $j-1$ vehicles by adding all the feasible position paths for vehicle $j$. By Lemma~\ref{lemma-sec6.2}, this only requires checking the feasible position paths of vehicles  $L_j, L_j+1, \ldots, j-1$. 

Before describing the algorithm, we define the following notation, for $j=1, \ldots, n$: \\
(a) $V_j = \{L_j, L_j+1, \ldots, j-1\}$,  the set of the only vehicles with an index smaller than $j$ that need to be considered when finding feasible position paths for vehicle $j$. The above discussion shows that $|V_j|\leq 2(m-1)K$.   \\
(b) $F_j$ = the set of all the feasible solutions for the vehicles in $V_j$ where the total energy usage of each vehicle does not exceed the threshold $T_0$. Since, as discussed above, for any vehicle, there are at most $(2K+1)^{m-1}$ feasible position paths, $|F_j| \le [(2K+1)^{(m-1)}]^{(2(m-1)K)}=O((2K)^{(2m^2K)})$.

We now describe the algorithm in detail. 

\medskip

\noindent {\bf Algorithm~LimitedPC}

\noindent {\bf Step 0:} Sort the elements in $\Gamma$ in non-decreasing order, and denote them as $\gamma_1, \ldots, \gamma_{|\Gamma|}$. Let $l=1$ and $r=|\Gamma|$. Set the target $t=\lfloor |\Gamma|/2\rfloor$. Let $T_0=\gamma_t$.

\noindent {\bf Step 1:} Generate all feasible position paths for vehicle~1, where each position path is an $m$-dimensional vector indicating the position of vehicle~1 in the $m$ segments. Keep the position paths with a total energy usage no more than $T_0$ and a final SoC no less than $\beta_1$ as a set $F_1$, where $|F_1| \leq (K+1)^{m-1}$.  Let $j=2$.

\noindent {\bf Step 2:} Consider all possible ways to add vehicle $j$ to every feasible solution for the first $j-1$ vehicles. For any solution in $F_j$, insert vehicle $j$ into this solution in any possible way, subject to the constraint that at most $K$ position changes are allowed from one phase to the next.  This is implemented as follows:

\noindent {\bf Step 2.1:} Use an 
$m|V_j|$-dimensional vector to represent each feasible solution in $F_j$, where the first $m$ dimensions represent the feasible path of vehicle $L_j$ in this solution (i.e., the $m$ positions of vehicle $L_j$ across the $m$ segments, respectively), the next $m$ dimensions represent that of vehicle $L_j+1, \ldots, j-2$, and the last $m$ dimensions represent that of vehicle $j-1$. 

\noindent {\bf Step 2.2:} For any vector $f\in F_j$, insert vehicle $j$ into $f$ by using the feasible positions in each segment that do not appear in $f$ subject to the constraint of at most $K$ position changes. Keep the resulting new vectors where the total energy usage of vehicle $j$ is no more than $T_0$ and its final SoC value of is at least $\beta_j$. 

\noindent {\bf Step 2.3:} If in the above procedure, for every vector  $f\in F_j$, no feasible position path is generated for vehicle $j$, then there is no feasible solution for the overall problem with the given threshold $T_0$ in the binary search, then go to Step~3. Otherwise,  create the set $F_{j+1}$ of all feasible solutions for the vehicles in $V_{j+1}$ by (i) first generating new vectors by inserting $j$ into the vectors in $F_j$, as described above, and (ii) removing the first $m$ dimensions (i.e., those corresponding to vehicle $L_j$) from each newly generated vector, and the set of the resulting vectors becomes $F_{j+1}$ to be used in the next iteration. If $j=n$, go to Step~3; otherwise, let $j=j+1$ and go to Step 2.2.

\noindent {\bf Step 3:} If no feasible position path is generated for vehicle $j$, then if $t=r$, then the problem is infeasible; otherwise, update $l=t$ and $t=\lfloor (t+r)/2\rfloor$ and go to Step 1. If $j=n$, update $r=t$ and $t=l+\lfloor (t-l)/2\rfloor$ and go to Step 1. 

\bigskip

We now present our main result.
\begin{proposition}  \label{prop:swap}
Algorithm~LimitedPC solves problem $n,m\mid \pi_1, pc\le K\mid MU$ in \\
$O \bigl ((\log n + m\log k)n(2K)^{(2m^2K+m)} \bigr )$ time.
\end{proposition}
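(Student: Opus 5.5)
The proof splits into two parts: correctness of the decision procedure for a fixed threshold $T_0$, and then the running-time accounting for the binary search times the sweep over vehicles.

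\emph{Correctness for a fixed threshold.} The argument is monotonicity plus exhaustive sliding-window enumeration.

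First, the predicate ``there is a feasible solution in which every vehicle's usage is at most $T_0$'' is monotone in $T_0$. The optimal value $Z^*$ is the usage of some vehicle along some feasible position path, so $Z^*\in\Gamma$. Binary search over the sorted $\Gamma$ therefore returns the smallest $\gamma_t$ for which the predicate holds, which equals $Z^*$, or it certifies infeasibility when the predicate fails even at $\gamma_{|\Gamma|}$.

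Second, for a fixed $T_0$, I would show by induction on $j$ that $F_{j+1}$ equals the set of restrictions to $V_{j+1}$ of all conflict-free assignments of feasible position paths to vehicles $1,\ldots,j$, each path having usage at most $T_0$ and final SoC at least $\beta$.
\begin{itemize}
\item The base case is Step~1.
\item In the inductive step, Step~2.2 extends each $f\in F_j$ by every feasible path of vehicle $j$ that avoids the positions occupied in $f$.
\item By Lemma~\ref{lemma-sec6.2}, vehicles with index below $L_j$ can never share a (segment, position) pair with vehicle $j$. Hence checking conflicts only against $V_j$ suffices, and discarding the coordinates of $L_j$ loses no information needed later.
\end{itemize}
There is one subtlety: positions must be filled by exactly one vehicle per segment, so that each segment's assignment is a permutation. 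I would argue that $n$ pairwise non-conflicting paths occupy $n$ distinct positions in each segment, and hence form a permutation automatically. So a nonempty $F_{n+1}$ is exactly a certificate of a feasible solution, and backtracking recovers it.

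\emph{Running time.} The key estimates are as follows.
\begin{itemize}
\item $|\Gamma|\le nm(2K+1)^{m-1}$, so the binary search uses $O(\log|\Gamma|)=O(\log n+m\log K)$ iterations.
\item Within one iteration there are $n$ stages $j$.
\item At each stage, $|F_j|=O((2K)^{2m^2K})$, from $|V_j|\le 2(m-1)K$ together with at most $(2K+1)^{m-1}$ paths per vehicle.
\item Each $f\in F_j$ is extended by at most $(2K+1)^{m-1}$ candidate paths, and each extension is checked in time polynomial in $m$ and $K$. This gives a per-stage cost of $O((2K)^{2m^2K+m})$, absorbing polynomial factors and the $(1+1/(2K))$-type constants into the exponent base.
\end{itemize}
Multiplying these factors yields the stated bound $O\bigl((\log n+m\log K)\,n\,(2K)^{(2m^2K+m)}\bigr)$.

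\emph{Main obstacle.} The delicate step is the invariant that $F_{j+1}$ faithfully represents all partial solutions. I must verify that a partial solution on $V_{j+1}$ that is extendable in the full problem is never pruned, and that distinct histories collapsing onto the same window vector are interchangeable. Lemma~\ref{lemma-sec6.2} gives noninteraction with earlier vehicles. I would also need to check that the later vehicles $j+1,\ldots,n$ interact only with vehicles in the window, which is the symmetric form of the same lemma.

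A secondary nuisance is justifying that the various $(2K+1)$ versus $2K$ factors and the $O(m\cdot mK)$ conflict-check cost fit within the stated exponent. If they do not fit cleanly, I would state the bound with $(2K+1)$ in place of $2K$, which is equivalent up to the constant factor $e^{O(m)}$ in the regime of fixed $m$ and $K$.
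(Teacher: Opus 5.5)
Your proposal is correct and follows the paper's proof: binary search over $\Gamma$, a per-threshold sweep over the vehicles that checks conflicts only against the window $V_j$ (justified by Lemma~\ref{lemma-sec6.2}), and a running time counted as $\log|\Gamma|\cdot n\cdot|F_j|\cdot(2K+1)^{m-1}$; your explicit invariant for $F_{j+1}$ and your observation that non-conflicting paths automatically form a permutation make rigorous what the paper only asserts. One small slip is in your fallback remark: replacing $2K$ by $2K+1$ in the exponent $2m^2K+m$ changes the bound by a factor $(1+\frac{1}{2K})^{2m^2K+m}$, which for $K=1$ is $e^{\Theta(m^2)}$ (in general it is at most $e^{m^2+m}$), so the factor is $e^{O(m^2)}$ rather than $e^{O(m)}$, though it is still a constant for fixed $m$ and $K$.
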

{\bf Proof:} The algorithm uses binary search to search over the set $\Gamma$, which includes all distinct possible values of the objective value of the problem. Considering a single iteration of the binary search with a threshold cost $T_0$, Algorithm~LimitedPC requires $n$ iterations, one for each vehicle. The algorithm first generates all the feasible position paths for vehicle 1. At each iteration $j$, for $j=2, \ldots, n$, it expands the feasible solutions generated so far for the first $j-1$ vehicles by adding all feasible position paths for vehicle $j$. By Lemma~\ref{lemma-sec6.2}, at iteration $j$, we only need to check the feasible position paths of vehicles  $L_j, L_j+1, \ldots, j-1$. Thus, the algorithm considers all possible solutions and finds a solution of cost at most $T_0$ if one exists. The value of $T_0$ is then adjusted using binary search, and terminates when the smallest such value has been found. 

Regarding the running time of the algorithm, the binary search is carried out over the set $\Gamma$, thus it involves at most $\log (|\Gamma|) = O(\log [nm(2K+1)^{m-1}]) = O(\log n +m\log K)$ binary search iterations. Each  iteration considers all vehicles $j=1, \ldots, n$ and for each vehicle $j$, considers all the vectors in $F_j$, and for each vector, considers all feasible position paths for the vehicle. Hence, each binary search iterations takes $O(n(2K)^{(2m^2K)}(2K)^m) = O(n(2K)^{(2m^2K+m)})$ time. 
Therefore, the overall time required by the overall algorithm is bounded by $O \bigl ((\log n + m\log K)n(2K)^{(2m^2K+m)} \bigr )$, which is bounded by $O(n\log n)$ when $m$ and $K$ are both fixed.  \blot

\section{Concluding Remarks} \label{sec:conclude}

Vehicle platooning offers substantial opportunities for reducing energy consumption, operating costs, and environmental impacts. Motivated by these benefits, this paper investigates the optimal sequencing and resequencing of vehicles within a platoon under two widely studied energy-related objectives and a broad range of practically relevant operating conditions. Our analysis provides a comprehensive characterization of these problems: for all but one problem case considered, we either develop an efficient optimal algorithm or establish formal computational intractability. For the intractable cases, we introduce the first algorithms with provable worst-case performance guarantees, and show computationally that their average performance is extremely close to optimal. We also develop an efficient solution method for settings in which vehicle position changes are restricted during on-road resequencing. Collectively, these results substantially advance the theoretical understanding of vehicle platoon sequencing and resequencing, an area that has received comparatively little rigorous analysis despite its practical importance.

The results of our work have several important implications for the management of vehicle platoons. In a number of practically relevant settings, our algorithms enable optimal resequencing decisions that can yield greater energy savings than existing heuristic approaches. Where optimal solutions are computationally difficult to obtain, the proposed approximation and heuristic procedures provide managers with simple and highly effective decision-support tools whose performance can be quantified in advance. The results also highlight the value of accurately estimating position-dependent energy savings, since improved information can be translated directly into better sequencing decisions. 
These insights can inform both platoon formation strategies and the design of future intelligent transportation systems.

Several directions for future research remain. Among the problems considered in this paper, the complexity of sequencing vehicles across three road segments remains unresolved; this problem is equivalent to a special case of Numerical 3-Dimensional Matching in which the three sets are identical. Beyond this open question, many important platooning decisions remain largely unexplored. Promising research directions include the integration of sequencing and charging decisions, dynamic resequencing when vehicles enter or leave a platoon during a journey, and the allocation of heterogeneous vehicles across multiple platoons. As advances in autonomous driving, vehicle-to-vehicle communication, and intelligent highway infrastructure enable the safe operation of increasingly larger platoons, these optimization challenges will become both more difficult and more consequential. We hope that the theoretical foundations established in this paper will stimulate further research on these problems and contribute to realizing the full economic and environmental potential of large-scale vehicle platooning systems.

\section*{References}  \label{sec:references}
\vspace{-0.15in}
\baselineskip=15pt
\parskip=12pt

\begin{hangref}

\item Barhoumi, O., G. Farhani, T. Rahman, M.H. Zaki, S. Tahar, F. Araji. 2025. Fuel consumption in platoons: A literature review. 
Available at: https://arxiv.org/html/2508.10891v1.

\item Bhoopalam, A.K., N. Agatz, R. Zuidwijk. 2018. Planning of truck platoons: A literature review and directions for future research. {\em Transportation Research Part B} {\bf 107} 212-228.

\item Braiteh, F.-E., F. Bassi, R. Khatoun. 2025. Platooning in commercial vehicles: A review of current solutions, standardization activities, cybersecurity, and research opportunities. {\em IEEE Transactions on Intelligent Vehicles} {\bf 10}(5) 3134-3155.

\item Coppola, A., D.G. Liu, A. Petrillo, S. Satini. 2022. Eco-driving control architecture for platoons of uncertain heterogeneous nonlinear connected autonomous electric vehicles. {\em IEEE Transactions on Intelligent Transportation Systems} {\bf 23}(12) 24220-24234.

\item Driver Knowledge Test Resource Center. 2026. What is a vehicle platoon? Available at: \\ https://www.driverknowledgetests.com/resources/what-is-a-vehicle-platoon/

\item Future Market Insights, Inc. 2026. Automotive Platooning System Market. Available at: \\ 
https://www.futuremarketinsights.com/reports/automotive-platooning-systems-market

\item Garey, M.R., D.S. Johnson. 1979. {\em Computers and Intractability: A Guide to the Theory of NP-Completeness}, W.~H. Freeman and Company, San Francisco.

\item Guo, S., X. Meng. 2025. Optimal resequencing of connected and autonomous electric vehicles in battery SOC-aware platooning. {\em IEEE Transactions on Transportation Electrification} {\bf 11}(4) 9298-9305.

\item Hussein, A.A., H.A. Rakha. 2020.
Vehicle platooning impact on drag coefficients and energy/fuel saving implications. Available at:
https://arxiv.org/abs/2001.00560.

\item Kaluva, S.T., A. Pathak, A. Ongel. 2020. Aerodynamic drag analysis of autonomous electric vehicle platoons. {\em Energies}, MDPI,  {\bf 13}(15), 1-18.

\item Kellerer, H., U. Pferschy, D. Pisinger. 2010. {\em Knapsack Problems}. Springer, Berlin, Germany.

\item Lammert, M., K. Kelly, K. Walkowicz. 2014. Summary of NREL's recent Class 8 tractor trailer platooning testing. Available at:
https://docs.nlr.gov/docs/fy15osti/62644.pdf

\item Larson, J., K.-Y. Liang, K.H. Johansson. 2015. A distributed framework for coordinated heavy-duty vehicle platooning. {\em IEEE Transactions on Intelligent Transportation Systems}
{\bf 16}(1) 419-429.

\item Larsson, E., G. Sennton, J. Larson. 2015. The vehicle platooning problem: Computational complexity and heuristics. {\em Transportation Research, Part~C} {\bf 60} 258-277.

\item Lee, W.J., S.I. Kwag, Y.D. Ko. 2021. The optimal eco-friendly platoon formation strategy for a heterogeneous fleet of vehicles. {\em Transportation Research, Part~D} {\bf 90} 102664.

\item Li, Q., Z. Chen, X. Li. 2022. A review of connected and automated vehicle platoon merging and splitting operations. {\em IEEE Transactions on Intelligent Transportation Systems} {\bf 23}(12) 22790-22806.

\item Lichtl\'{e}, N., K. Jang, A. Shah, E. Vinitsky, J.W. Lee, A.M. Bayen. 2024.  Traffic smoothing controllers for autonomous vehicles using deep reinforcement learning and real-world trajectory data. Available at: https://arxiv.org/html/2401.09666v1

\item Liu, C., Y. Liu, Z. Xu, L. Wang, H. Tang. 2026. En-route charging and resequencing policies for heterogeneous electric truck platoons. {\em Transportation Research, Part~D} {\bf 154} 105264.

\item McAuliffe, B., M. Lammert, X.-Y. Lu,S.  Shladover, et al. 2018. Influences on energy savings of heavy trucks using cooperative 
adaptive cruise control. SAE International Technical Paper 2018-01-1181,  doi:10.4271/2018-01-1181

\item Peng, C., S. Guo, M. Liu, L. Xiao. 2026. 
Congestion-aware platoon re-sequencing optimization for electric vehicles using deep reinforcement learning. {\em Neurocomputing} {\bf 676} 133040.

\item Pinedo, M.L. 2022. {\em Scheduling; Theory, Systems, and Algorithms}, 6/e. Springer, Berlin, Germany.

\item Rebelo, M., S. Rafael, J.M. Bandeira. 2024. Vehicle platooning: A detailed literature review on environmental impacts and future research directions. {\em Future Transportation} {\bf 4} 591-607.

\item Recurrent Auto. 2023. Slowing down has its benefits: How to game efficiency in an electric car. Available at:
https://www.recurrentauto.com/research/game-efficiency-in-an-electric-car

\item Recurrent Auto. 2024. ICE vs. EV deep dive: How gas cars and electric cars stack up.
Available at: \\
https://www.recurrentauto.com/research/ice-vs-ev-hyundai-kona-edition

\item ResearchNester. 2026. Truck platooning systems market outlook. Available at: \\
https://www.researchnester.com/reports/truck-platooning-systems-market/3874

\item Srisomboon, I., S. Lee. 2021. A sequence change algorithm in vehicle platooning for longer driving range. {\em International Conference on Information Networking (ICOON)} 24-27.

\item Sun, X., Y. Yin. 2019. Behaviorally stable vehicle platooning for energy savings. {\em Transportation Research, Part~C} {\bf 99} 37-52.

\item Tian, Y., K. Xu, N. Ansari. 2005. TCP in wireless environments: Problems and solutions. {\em IEEE Radio Communications}, March, 527-532.


\item van de Hoef, S., K.H. Johansson, D.V. Dimarogonas. 2018. Fuel-efficient en route formation of truck platoons. {\em IEEE Transactions on Intelligent Transportation Systems} {\bf 19}(1) 102-112.

\item Yang, Z.-F., S.-H. Li, A.-M. Liu, Z. Yu, H.-J. Zeng, S.-W. Li. 2019. Simulation study on energy saving of passenger car platoons based on DrivAer model. {\em Energy Sources, Part A: Recovery, Utilization, and Environmental Effects} {\bf 41}(24), 3076-3084.

\item Yang, Z., L. Wang, Z. Yu, H. Wang, W. Sun. 2024. Energy efficient strategy for heterogeneous truck platooning based on a non-uniform platooning model. PMC / Nature Scientific Reports. Available at:
doi: 10.1038/s41598-024-80232-5.

\item Zabat, M., N. Stabile, S. Farascaroli, F. Browand. 1995. The aerodynamic performance of platoons: A final report.

\item Zhang, L., F. Chen, X. Ma, X. Pan. 2020. Fuel economy in truck platooning: A literature overview and directions for future research. {\em Journal of Advanced Transportation}, ID 2604012.

\item Zhang, Y., G. Geffen, J. Zhao, M. Shang, S. Wang, Y.-J. Wu. 2026. Safety, mobility, and environmental impacts of driver-assistance-enabled electric vehicles: An empirical study.
Available at: 
https://arxiv.org/abs/2601.17256

\item Zheng, B., S. Guo. 2025. Dynamic platoon re-sequencing for electric vehicles based on bootstrapped DQN. {\em Electronics} {\bf 14} 4417.

\item Zheng, B., S. Guo, M. Liu, L. Xiao. 2025. Dynamic re-sequencing of EV platoons using noisy dueling DQN for energy fairness. {\em International Conference on Systems, Man and Cybernetics (SMC)}, Vienna, Austria, October.
    
\end{hangref}

\end{document}